\def\calM{\mathcal{M}}
\newcommand\eps{\varepsilon}
\newcommand\diam{\mathrm{diam\,}}
\newcommand\dist{\mathrm{dist}}
\newcommand\R{\mathbb{R}}
\newcommand\N{\mathbb{N}}
\newcommand\calH{\mathcal{H}}
\newcommand\calN{\mathcal{N}}
\newcommand\calF{\mathcal{F}}
\newcommand\calA{\mathcal{A}}
\newcommand\calG{\mathcal{G}}
\newcommand\calB{\mathcal{B}}
\newcommand\calL{\mathcal{L}}
\providecommand{\ds}{\, \mathrm{d} s}
\providecommand{\dt}{\, \mathrm{d} t}
\providecommand{\dx}{\, \mathrm{d} x_1}
\providecommand{\dy}{\, \mathrm{d} x_2}
\providecommand{\dxy}{\, \mathrm{d} x}
\providecommand{\dyy}{\, \mathrm{d} y}
\providecommand{\dH}{\,  \mathrm{d} \calH^1}
\newcommand{\LM}[1]{\hbox{\vrule width.2pt \vbox to#1pt{\vfill \hrule width#1pt height.2pt}}}
\newcommand{\LL}{{\mathchoice{\,\LM7\,}{\,\LM7\,}{\,\LM5\,}{\,\LM{3.35}\,}}}
\newtheorem{theorem}{Theorem}[section]
\newtheorem{proposition}[theorem]{Proposition}
\newtheorem{lemma}[theorem]{Lemma}
\newtheorem{example}[theorem]{Example}
\definecolor{green}{rgb}{0,.5,0}
\numberwithin{equation}{section}
\newcounter{Nummer}
\begin{document}
\begin{center}
  {\Large
  Deformation concentration for
martensitic microstructures
in the limit of low volume fraction
  }\\[5mm]
{December 22, 2015}\\[5mm]
Sergio Conti, Johannes Diermeier, and Barbara Zwicknagl\\[2mm]
{\em 
 Institut f\"ur Angewandte Mathematik,
Universit\"at Bonn\\ 53115 Bonn, Germany}\\[1mm]
    \begin{minipage}[c]{0.8\textwidth}
    \end{minipage}
\end{center}
\begin{abstract}
We consider a singularly-perturbed nonconvex energy functional which arises in the study of microstructures in shape memory alloys. The  scaling law for the minimal energy predicts a transition from 
a parameter regime in which uniform structures are favored, to a regime in which the formation of fine patterns is expected. We focus on the transition regime and derive the reduced model in the 
sense of $\Gamma$-convergence. The limit functional turns out to be similar to the Mumford-Shah functional with additional constraints on the jump set of admissible functions.
One key ingredient in the proof is an approximation result for $SBV^p$ functions whose jump
sets have a prescribed orientation.
\end{abstract}
\section{Introduction}
In this paper, we consider for $\theta\in(0,1/2]$, $\eps>0$ and $p\in (1,\infty)$ the energy functional $I_{\eps,\theta}^p:\calB\to[0,\infty)$, 
\begin{equation}\label{eq:unrescaled}
I_{\eps,\theta}^{p}(v):=
\int_{(0,1)^2}|\partial_1v|^p+\min\left\{|\partial_2v+\theta|^p,\ |\partial_2 v-(1-\theta)|^p\right\}\dxy+\eps|D^2v|((0,1)^2),
\end{equation}
where 
\begin{eqnarray}
\label{eq:admunr}
\calB:=\{v\in W^{1,p}((0,1)^2):\ \partial_1v,\ \partial_2v\in BV((0,1)^2),\ v(0,\cdot)=0\}.
\end{eqnarray}
Here we use the short-hand notation $\partial_jv:=\frac{\partial}{\partial{x_j}}v$ for $j=1,2$. It turns out that for fixed $p$, there are two scaling regimes for the minimal energy. 
If the coefficient {$\eps$} of the higher-order term  is large, on a scale set by $\theta$, then low-energy maps $v$ are approximately 
constant, and the optimal energy is of order $\theta^p$. If instead $\eps$ is very small
fine structures arise, with $\partial_2 v$ oscillating between $-\theta$ and $1-\theta$, on a scale which refines
close to the ${{\{}}x_1=0{{\}}}$ boundary.
Precisely, we 
have the following result. 
\begin{theorem}\label{th:scaling}
For any $p\in (1,\infty)$ there exists $c>0$ such that for all $\theta\in(0,1/2]$ and all $\eps>0$,
\begin{eqnarray}\label{eq:sl}
\frac1c\theta^p\min\{1,\ (\eps/\theta^p)^{p/(p+1)}\}\leq \inf_{\calB} I_{\theta,\eps}^p\leq c\theta^p\min\{1,\  (\eps/\theta^p)^{p/(p+1)}\}.
\end{eqnarray}
\end{theorem}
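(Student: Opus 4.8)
The plan is to establish the two inequalities in \eqref{eq:sl} separately. For the \emph{upper bound} there are two cases. If $\eps\geq\theta^p$, the competitor $v\equiv 0\in\calB$ has $I_{\eps,\theta}^p(0)=\int_{(0,1)^2}\min\{\theta^p,(1-\theta)^p\}=\theta^p$ because $\theta\leq 1/2$, and since $\eps/\theta^p\geq 1$ the right-hand minimum equals $1$, so already $\inf_\calB I_{\eps,\theta}^p\leq\theta^p\leq c\,\theta^p\min\{1,(\eps/\theta^p)^{p/(p+1)}\}$. If $\eps<\theta^p$ I would use a self-similar branching construction refining the microstructure towards the boundary $\{x_1=0\}$: on a finite family of slabs $(t_{k+1},t_k)\times(0,1)$ the derivative $\partial_2 v$ takes only the values $-\theta$ and $1-\theta$, the $(1-\theta)$-phase occupying thin bands of width-fraction $\theta$ whose number doubles from one slab to the next. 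Crucially, the new bands are \emph{inserted} at the midpoints of the existing gaps rather than obtained by splitting existing bands, so all interfaces have slope of order $\theta\ell_k/w_k$, where $\ell_k$ and $w_k$ are the period and width of the $k$-th slab; the Hadamard jump condition is then compatible with keeping $|\partial_1 v|\lesssim\theta\ell_k/w_k$, so the elastic contribution of slab $k$ is $\lesssim\theta^p\ell_k^p w_k^{1-p}$ and the interfacial one is $\lesssim\eps(w_k/\ell_k+\theta)$. Taking $\ell_k=2^{-k}L$, the $w_k$ geometric with a ratio depending only on $p$ (so that all series converge), logarithmically many slabs (so the residual near $\{x_1=0\}$ is negligible), and the coarsest period $L\sim(\eps/\theta^p)^{1/(p+1)}\leq 1$, a routine optimisation gives total energy $\lesssim\theta^p L^p+\eps/L+\eps\theta\log(1/(\theta\eps))\lesssim\theta^p(\eps/\theta^p)^{p/(p+1)}$.

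For the \emph{lower bound} put $e:=\inf_\calB I_{\eps,\theta}^p$ and fix a near-minimiser $v$; I would run a slicing argument in the spirit of Kohn--Müller. Since $v(0,\cdot)=0$, Jensen's inequality gives, for a.e.\ $t\in(0,1)$, $\int_0^1|v(t,x_2)|^p\,dx_2\le t^{p-1}\int_{(0,1)^2}|\partial_1 v|^p\le t^{p-1}e$, so $v(t,\cdot)$ is $L^p$-small for small $t$. On a slice $t$ on which the bulk energy $\beta(t):=\int_0^1\min\{|\partial_2 v(t,\cdot)+\theta|^p,|\partial_2 v(t,\cdot)-(1-\theta)|^p\}$ is small, $\partial_2 v(t,\cdot)$ equals $-\theta$ or $1-\theta$ off a set of small measure, hence between consecutive jumps of $\partial_2 v(t,\cdot)$ the profile $v(t,\cdot)$ is affine with slope of modulus at least $\theta$ (again $\theta\le 1/2$); a power-mean estimate then shows that the number $N(t)$ of such jumps satisfies $N(t)\gtrsim\theta/\|v(t,\cdot)\|_{L^p(0,1)}$ up to an error controlled by $\beta(t)^{1/p}$, so $N(t)\gtrsim\theta\,t^{-(p-1)/p}e^{-1/p}$ on the good slices. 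Using the $BV$ slicing inequality $|D^2 v|((0,\mu)\times(0,1))\ge\int_0^\mu|D(\partial_2 v(t,\cdot))|((0,1))\,dt\gtrsim\int_0^\mu N(t)\,dt$ one obtains $e\ge\eps|D^2v|((0,\mu)\times(0,1))\gtrsim\eps\theta e^{-1/p}\mu^{1/p}$, that is, $\mu\lesssim e^{p+1}/(\eps\theta)^p$. If $e\le c_0\theta^p$ for a small $c_0=c_0(p)$, the good slices exhaust $(0,1)$ up to a bad set of controlled measure, so $\mu\sim 1$ is admissible and $e\gtrsim(\eps\theta)^{p/(p+1)}$; otherwise $e>c_0\theta^p$. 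In both cases $e\gtrsim\min\{\theta^p,(\eps\theta)^{p/(p+1)}\}=\theta^p\min\{1,(\eps/\theta^p)^{p/(p+1)}\}$, as claimed.

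The hard part is the lower bound, and within it the \emph{quantitative, error-robust} form of the slice estimate $N(t)\gtrsim\theta/\|v(t,\cdot)\|_{L^p}$: one must track how the affine-pieces/power-mean argument degrades when $\partial_2 v(t,\cdot)$ takes the values $-\theta,1-\theta$ only approximately, and then integrate over $t$ while splitting slices into ``good'' ($\beta(t)$ small) and ``bad'' ($\beta(t)$ large, small total measure by Chebyshev), choosing the threshold so that neither the bad slices nor the bulk-energy errors spoil the exponent. Keeping the powers of $\theta$ correct throughout — both here and in exploiting the $\theta$-thin bands in the construction — is the delicate point; the remaining steps are routine.
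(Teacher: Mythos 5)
Your upper bound follows the paper's template (constant competitor for $\eps\geq\theta^p$, branching for $\eps<\theta^p$ with coarsest period $L\sim(\eps/\theta^p)^{1/(p+1)}$); you insert new bands at gap midpoints rather than splitting existing ones and so pick up a harmless $\eps\theta\log$ remainder where the paper cuts the refinement off at a finite depth and interpolates, but the slab-by-slab bookkeeping and the final scaling are the same. The lower bound is where you take a genuinely different route. You run a global, Kohn--M\"uller-style oscillation count over vertical slices: the trace at $\{x_1=0\}$ gives $\|v(t,\cdot)\|_{L^p}\lesssim t^{(p-1)/p}e^{1/p}$, the two-well bulk term forces good slices to oscillate at least $\sim\theta/\|v(t,\cdot)\|_{L^p}$ times, and integrating this interface count against $\eps$ and closing with the dichotomy $e\leq c_0\theta^p$ vs.\ $e>c_0\theta^p$ yields the exponent. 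The paper instead first localises to a horizontal strip $J$ of height $t:=\min\{1,(\eps/\theta^p)^{1/(p+1)}\}$ with $I_J\leq 2t\,\inf_\calB I^p_{\theta,\eps}$, selects by Fubini a \emph{single} controlled vertical line $x_1\in M$, and splits $M=M_1\cup M_2\cup M_3$ according to which branch of the pointwise minimum the slice uses; pure-branch slices are dispatched by a one-dimensional interpolation/incompatibility estimate $t^2\theta\lesssim\min_c\|v(x_1,\cdot)+\theta x_2-c\|_{L^1(J)}+\|v(x_1,\cdot)\|_{L^1(J)}$ (after Conti and Zwicknagl), while mixed slices contribute either elastic energy $\gtrsim t$ or total variation $\gtrsim 1$ on the single line, hence surface energy $\gtrsim\eps$. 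The paper's route therefore never needs an oscillation count, sidestepping precisely the error-robust counting lemma you correctly flag as the delicate step, at the price of invoking the interpolation estimate; your route is more self-contained and exhibits the interfacial density slice by slice, but the bookkeeping of the $\beta(t)$-error, the Chebyshev split of slices, and the $c_0\theta^p$ dichotomy does need to be carried out in detail. Both approaches are sound and give the same scaling.
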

The proof of this scaling law \eqref{eq:sl} is fairly standard and well known in the case $p=2$ (see \cite{kohn-mueller:92,kohn-mueller:94,conti:06,diermeier:10,zwicknagl:14}), and we provide it in 
the appendix for general $p$.
In this paper, we focus on the transition regime, in which $\eps/\theta^p=:\sigma\in (0,\infty)$ is fixed. In this case, $\inf I_{\sigma\theta^p,\theta}^p\sim \theta^p$, and, after rescaling 
$v(x_1,x_2):=\theta(u(x_1,x_2)-x_2)$ and  $E_{\sigma,\theta}^p(u):=I_{\sigma\theta^p,\theta}^p(v)/\theta^p$, we are left to study
\begin{eqnarray}
E_{\sigma,\theta}^p(u):=\begin{cases}
\displaystyle
\int_{(0,1)^2}|\partial_1u|^p+\min\left\{|\partial_2u|^p,\ |\partial_2 u-\frac{1}{\theta}|^p\right\}\dxy&\\
\hskip5cm +\sigma\theta|D^2u|((0,1)^2)&\text{\ if\ }u\in\calA,\\
+\infty&\text{\ otherwise},
\end{cases}\label{eq:funcrecaled}
\end{eqnarray}
where 
\begin{eqnarray}
\calA:=\{ u\in W^{1,p}((0,1)^2):\ \partial_1 u,\ \partial_2 u\in BV((0,1)^2),\  u(0,x_2)=x_2\}.
\end{eqnarray}
For small $\theta$ the deformation concentrates on a {{set of}} small {{volume}}. In particular, $\partial_2 u$ becomes of order
$1/\theta$ in a small region of order $\theta$. The length of the boundary of  this 
exceptional set is controlled by the second-order term, since in going across it the gradient
$Du$ has two jumps of order $1/\theta$. Asymptotically $u$ approaches locally an $SBV$ function, with the additional property
that the singular part of the gradient has a specific orientation. 

Our main result is the $\Gamma$-limit of $E_{\sigma,\theta}^p$ as $\theta\to 0$. Precisely, 
we set
\begin{eqnarray}\label{eq:limitfunc}
  E_\sigma^p(u):=\begin{cases}\displaystyle
  \int_{{(0,1)^2}} (|\partial_1 u|^p +|\partial_2u|^p)\dxy + 2\sigma \calH^1(J_u)&\text{\ if\ }u\in \overline{SBV}^p_y,\\
  +\infty&\text{\ otherwise},
  \end{cases}
\end{eqnarray}
where 
\begin{eqnarray}\nonumber
\overline{SBV}^p_y:=\{u\in SBV_{\text{loc}}((0,1)^2): \hspace{-.5cm}&&\nabla u \in L^p((0,1)^2;\R^2),
\hskip2mm u(0,x_2)=x_2, \\
&&
\text{and } [u]\nu_u\in [0,\infty)e_2 {\ }\calH^1-\text{a.e.}\}.
\label{eq:admlimit}
\end{eqnarray}
We remark that for any $u\in \overline{SBV}^p_y$ one has
$|Du|((0,1)\times(\delta,1-\delta))<\infty$
for all $\delta\in (0,1/2)$, see Lemma \ref{lemmasbvpy} below.
We prove the following theorem.
\begin{theorem}\label{th:gammalimit}
\begin{itemize}
\item[(i)] {\em Compactness. }
Suppose that $\theta_k\to 0$, and let $u_k\in\mathcal{A}$ such that $E_{\sigma,\theta_k}^p(u_k)\leq M$ for some $M>0$. Then there exists a subsequence (not relabeled) and $u\in 
\overline{SBV}_y^p$ such that 
$u_k\to u$ in $L^1((0,1)^2)$ and
$u_k\overset{\ast}{\rightharpoonup} u$ in $BV((0,1)\times(\delta,1-\delta))$ for all $\delta\in (0,1/2)$. 
\item[(ii)] {\em Lower bound. }Suppose that $\theta_k\to 0$. Let $u_k\in\cal{A}$ and
assume that $u_k\to u$ in $L^1((0,1)^2)$
 for some $u\in \overline{SBV}^p_{y}$.
 Then $E_\sigma^p(u)\leq\liminf_{k\to\infty}E_{\sigma\theta_k^p,\theta_k}^p(u_k)$.
\item[(iii)] {\em Upper bound. }Let $\theta_k\to 0$ and $u\in \overline{SBV}_y^p$. Then there exist $u_k\in\calA$ such that 
 $u_k\to u$ in $L^1((0,1)^2)$ and 
$E_\sigma^p(u)=\limsup_{k\to\infty}E_{\sigma\theta^p_{{k}},\theta_k}^p(u_k)$. 
\end{itemize}
\end{theorem}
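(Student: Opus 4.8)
\medskip
\noindent\emph{Strategy and geometric picture.}
We follow the classical three-step scheme (compactness, $\liminf$-inequality, $\limsup$-inequality). The guiding picture is that if $E^p_{\sigma,\theta_k}(u_k)\le M$, then the set $S_k:=\{\partial_2 u_k>\tfrac1{2\theta_k}\}$ on which the second well is active is, up to small corrections, a union of thin horizontal slabs of height of order $\theta_k$, and these slabs collapse as $k\to\infty$ onto a horizontal rectifiable curve, namely the jump set $J_u$ of the limit. Crossing such a slab, $\partial_2 u_k$ leaves the first well, reaches the second well and returns, so $\nabla u_k$ has two jumps of magnitude $\approx 1/\theta_k$ across it; multiplied by $\sigma\theta_k$ and integrated along the slab this produces $2\sigma\calH^1(J_u)$, the factor $2$ being the ``enter and leave'' count.

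\medskip
\noindent\emph{Compactness.}
From $\int|\partial_1 u_k|^p\le M$ and $u_k(0,x_2)=x_2$ one obtains, integrating $\partial_1 u_k$ along horizontal lines, $\|u_k-x_2\|_{L^p((0,1)^2)}\le M^{1/p}$; in particular $(u_k)$ is bounded in $L^p$, which by $p>1$ also gives equi-integrability near $\{x_2\in\{0,1\}\}$. The three energy terms yield: $\partial_1 u_k$ bounded in $L^p$; $\partial_2 u_k$ bounded in $L^p$ on $(0,1)^2\setminus S_k$, together with $|\{\tfrac1{2\theta_k}<\partial_2 u_k<\tfrac3{4\theta_k}\}|+|\{\partial_2 u_k>\tfrac5{4\theta_k}\}|\le C\theta_k^p M$; and $|D^2 u_k|((0,1)^2)\le M/(\sigma\theta_k)$, which via the coarea formula applied to a $1$-Lipschitz truncation of $\partial_2 u_k$ gives a set of finite perimeter comparable to $S_k$ with perimeter $\le CM/\sigma$ uniformly. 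Combining these with the $L^p$-bound on $u_k$ (a slab of height $h$ on which $\partial_2 u_k\approx 1/\theta_k$ forces $u_k$ to climb by $h/\theta_k$, which is costly in $L^p$) one deduces $|S_k|\to 0$. Hence $|Du_k|=\int|\nabla u_k|$ is bounded on every strip $(0,1)\times(\delta,1-\delta)$, so along a subsequence $u_k\overset{*}{\rightharpoonup}u$ there and, with $L^1_{\mathrm{loc}}$-precompactness and the equi-integrability, $u_k\to u$ in $L^1((0,1)^2)$ with $u\in SBV_{\mathrm{loc}}$. To place $u$ in $\overline{SBV}^p_y$: since $\partial_1 u_k$ is bounded in $L^p$ and $\partial_1 u_k\to\partial_1 u$ in $\mathcal D'$, the measure $D_1 u$ is absolutely continuous, so $[u](\nu_u)_1=0$ and $\nu_u=\pm e_2$ on $J_u$; since $\partial_2 u_k\,\mathbf 1_{S_k}\ge 0$, the singular part of $D_2 u$ is nonnegative, whence $[u]\nu_u\in[0,\infty)e_2$; and the absolutely continuous part of $D_2 u$ is the weak-$L^p$ limit of $\partial_2 u_k\,\mathbf 1_{(0,1)^2\setminus S_k}$, so $\nabla u\in L^p$. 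Absence of a Cantor part follows from Ambrosio's $SBV$-closure theorem.

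\medskip
\noindent\emph{Lower bound.}
We may assume $\liminf_k E^p_{\sigma,\theta_k}(u_k)<\infty$, pass to a subsequence realising it under a uniform bound $M$, and use the compactness step (the $BV$-limit on strips is the given $u$). The part $\int|\partial_1 u_k|^p$ is treated by weak-$L^p$ lower semicontinuity, $\liminf\int|\partial_1 u_k|^p\ge\int|\partial_1 u|^p$. For the remaining terms we slice along vertical lines: with $w_k:=u_k(x_1,\cdot)$, the bound $|D^2 u_k|\ge|\partial_{22}u_k|$ and disintegration of $BV$-measures give $\int_{(0,1)^2}\min\{|\partial_2 u_k|^p,|\partial_2 u_k-\tfrac1{\theta_k}|^p\}\dxy+\sigma\theta_k|D^2 u_k|((0,1)^2)\ge\int_0^1 G_{\theta_k}(w_k)\dx$, where $G_\theta(w):=\int_0^1\min\{|w'|^p,|w'-\tfrac1\theta|^p\}\dy+\sigma\theta|Dw'|((0,1))$. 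For a.e.\ $x_1$, along a further subsequence $w_k\to u(x_1,\cdot)\in SBV^p(0,1)$ in $L^1(0,1)$ (Lemma~\ref{lemmasbvpy}). A one-dimensional singular-perturbation argument, localised around the finitely many jump points --- where $w_k$ must develop a cliff, forcing $w_k'$ into the second well (else the bulk term blows up) and thus at least $(2-o(1))/\theta_k$ of variation of $w_k'$, while on the complement $w_k'$ restricted to the first-well region converges weakly in $L^p$ --- yields $\liminf G_{\theta_k}(w_k)\ge\int_0^1|u'(x_1,\cdot)|^p\dy+2\sigma\,\#J_{u(x_1,\cdot)}$. By Fatou and $\#J_{u(x_1,\cdot)}=\#\{x_2:(x_1,x_2)\in J_u\}$ this integrates to $\int_{(0,1)^2}|\partial_2 u|^p\dxy+2\sigma\int_0^1\#\{x_2:(x_1,x_2)\in J_u\}\dx$; since $\nu_u=\pm e_2$ on $J_u$, the slicing formula for rectifiable sets turns the last integral into $\calH^1(J_u)$. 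Adding the two contributions gives $E^p_\sigma(u)\le\liminf_k E^p_{\sigma,\theta_k}(u_k)$.

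\medskip
\noindent\emph{Upper bound.}
By the approximation result for $\overline{SBV}^p_y$ announced in the abstract --- which replaces a general $u$ by functions whose jump set is a finite union of horizontal segments and which are piecewise affine away from it, with convergence of $\int|\nabla\cdot|^p$ and of $\calH^1(J_\cdot)$ and with the boundary datum preserved --- together with continuity of $E^p_\sigma$ along such sequences and a diagonal argument, it suffices to treat a $u$ of this special form. We then define $u_k$ by replacing each jump segment $I_i\times\{c_i\}$ with a slab $I_i\times(c_i,c_i+[u]_i\theta_k)$ on which $\partial_2 u_k\equiv 1/\theta_k$ (so $u_k$ climbs by exactly $[u]_i$ and matches both traces), shifting $u$ upward by $[u]_i\theta_k$ above the slab, interpolating $\partial_1 u_k$ across the slab, closing the slab off in a thin wedge near the endpoints of $I_i$, and inserting a boundary layer of width $\theta_k$ near $\{x_1=0\}$ to restore $u_k(0,\cdot)=\mathrm{id}$, so $u_k\in\calA$. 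Then $u_k\to u$ in $L^1$ (the total $x_2$-shift is $O(\theta_k)$); the bulk energy outside the slabs converges to $\int|\nabla u|^p$ (for $k$ large $|\partial_2 u|<\tfrac1{2\theta_k}$, so the minimum equals $|\partial_2 u_k|^p$), the bulk energy on the slabs tends to $0$ (the minimum vanishes there, on a set of area $O(\theta_k)$), and $\sigma\theta_k|D^2 u_k|$ tends to $2\sigma\sum_i|I_i|=2\sigma\calH^1(J_u)$, since the two faces of each slab carry a jump of $\partial_2 u_k$ of magnitude $\approx 1/\theta_k$ over length $\approx|I_i|$, while the jumps of $\partial_1 u_k$, the wedges, the piecewise-affine creases and the boundary layer contribute only an extra $O(1)$ to $|D^2 u_k|$, hence $O(\theta_k)$ after multiplication. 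Together with the lower bound this gives the asserted equality. The main obstacle is the lower bound: the a priori control of the bad set $S_k$ (on which $\partial_2 u_k$ is unbounded, so that $\nabla u\in L^p$ in the limit is not automatic), obtained by balancing the three energy terms against the $L^p$-bound, and the extraction of the sharp constant $2\sigma$ through the one-dimensional slicing --- while the $\overline{SBV}^p_y$-approximation invoked above is the other substantial ingredient, established separately.
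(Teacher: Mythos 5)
Your overall strategy matches the paper's: compactness and lower bound via vertical slicing, upper bound via density of functions with polygonal jump sets and an explicit slab construction. But there is one genuine gap, and one claim that is phrased so loosely that it obscures where the real work lies.

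The gap is your treatment of the Cantor part in the compactness step. You write that ``absence of a Cantor part follows from Ambrosio's $SBV$-closure theorem,'' but that theorem requires a uniform bound on $\int\phi(|\nabla u_k|)\,dx + \calH^1(J_{u_k})$ for some superlinear $\phi$. Here $J_{u_k}=\emptyset$, but $\int|\nabla u_k|^p\,dx$ is \emph{not} uniformly bounded --- the whole point is that $\partial_2 u_k$ is of order $1/\theta_k$ on a set $S_k$ of measure $\approx\theta_k$, and $\int_{S_k}|\partial_2 u_k|^p \approx\theta_k^{1-p}\to\infty$. So the hypotheses of the closure theorem fail, and the theorem gives you nothing. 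The absence of a Cantor part must instead be extracted from the second-order term: in the paper this is done by a vertical-slicing argument on each $u_k^{x_1}$, using the coarea formula for $|(u_k^{x_1})''|$ to control the number of connected components of the superlevel set $\{(u_k^{x_1})'>t_k\}$ for a well-chosen $t_k\in[\eta/\theta_k,(1-\eta)/\theta_k]$. This shows that the slice limit $u^{x_1}$ is in $W^{1,p}$ away from \emph{finitely many} points, hence in $SBV^p$ on each strip. Your own lower-bound sketch appeals to ``the finitely many jump points'' on each slice, but that finiteness is exactly what is not yet established; the counting argument has to come first. You need to replace the appeal to Ambrosio's theorem with a version of this level-set counting (or a comparable argument), and notice that once you do so it simultaneously provides the missing ingredient for the sharp constant $2\sigma$ in the lower bound.

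Two smaller remarks. First, in the compactness step you claim $\nabla u\in L^p$ because the absolutely continuous part of $D_2 u$ equals the weak-$L^p$ limit of $\partial_2 u_k\mathbf 1_{(0,1)^2\setminus S_k}$. From the weak convergences you can only conclude $D_2 u - v\ge 0$ with $v$ the weak limit; you still have to rule out a positive absolutely continuous contribution on top of $v$. This again comes out of the slicing argument and is not automatic. Second, your upper-bound construction is a legitimate minor variant of the paper's: you shift $u$ upward above each inserted slab, whereas the paper instead modifies $u$ only \emph{inside} the slab $\{y_k<x_2<y_k+\theta h_k(x_1)\}$ by adding $\frac{x_2-y_k}{\theta}-h_k(x_1)$, which matches both traces without changing the domain and avoids the (small) bookkeeping of the vertical shift and of the boundary rescaling. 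Either works; the paper's version is marginally cleaner because $u_\theta$ agrees with $u$ outside the slabs, so the $L^1$ convergence and the energy estimates localize immediately.
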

By (i) the sequence in (iii) obeys also 
$u_k\overset{\ast}{\rightharpoonup} u$ in $BV((0,1)\times(\delta,1-\delta))$ for all $\delta\in (0,1/2)$.
The result {{of Theorem \ref{th:gammalimit}}} has been announced in \cite{diermeier:15} for the case $p=2$ and is part of Johannes Diermeier's Ph.D. thesis (see \cite{diermeier:16}). We focus here on the two-dimensional situation, and 
refer to \cite{diermeier:16} for an analogous result in one dimension. For ease of notation, we consider the functional on the unit
square $(0,1)^2$, but we point out that our analysis can 
be easily adapted to treat more general domains $\Omega\subset\R^2$.

The limit functional $E_\sigma^p$ bears similarities with the Mumford-Shah functional (see \cite{DeGiorgi-et-al:89,Mumford-Shah:89}). The main difference lies in the constraints on the jump sets of 
admissible functions, see \eqref{eq:admlimit}. This introduces technical difficulties in the proof of the upper bound, for which we follow the general strategy to use density of functions with a 
simpler structure. More precisely, we provide explicit constructions of recovery sequences only for functions whose jump sets consist of finitely many segments and which are smooth away from their 
jump sets, and prove an accompanying density result to deal with general functions in $\overline{SBV}_y^p$. To the best of our knowledge, there are no approximation results in the literature that 
respect the constraints for functions in $\overline{SBV}_y^p$ (see e.g. \cite{cortesani-toader:1999, dibos-sere:1997} and the references given there).

The motivation for our analysis comes from the mathematical study of martensitic phase transitions, where the functional $I_{\theta,\eps}^2$ as defined in \eqref{eq:unrescaled} arises in the study of 
microstructures near interfaces (see  \cite{kohn-mueller:92,kohn-mueller:94}). Here, $(0,1)^2$ represents a martensitic region
meeting rigid austenite at an interface $\{x_1=0\}$. The first two terms in 
the definition of $I_{\eps,\theta}^p$ model the elastic energy, and the last term can be interpreted as an interfacial energy
between different variants of martensite; 
the coefficient $\eps$ represents a typical interfacial energy 
per unit length. The preferred gradients $(0,-\theta)^T$ and $(0,1-\theta)^T$ correspond to two variants of martensite, and the parameter $\theta\in(0,1/2]$ measures compatibility between the 
austenite and the martensite phases: For $\theta=0$, austenite and martensite are compatible in the sense that $v_c:=0$ satisfies $I_{\eps,\theta}^p(v_c)=\min I_{\eps,\theta}^p=0$, while for 
$\theta>0$, we have $\inf I_{\eps,\theta}^p>0$. It has been found in experiments that
compatibility between the phases is closely related to the width of the hysteresis loop 
accompanying the phase transformation (see \cite{james-zhang:05,cui-et-al:06,zhang:07,zarnetta-et-al,louie-et-al,Sriva-et-al,bechthold-et-el:12}). This theory of hysteresis predicts that the energy 
barrier $\inf I_{\eps,\theta}^p$ plays a major role for reversibility of the phase transformation {{(see \cite{zjm:09,chluba-et-al:15})}}.

As pointed out before, the scaling law for the minimal energy \eqref{eq:sl} is well-understood for the physically relevant case $p=2$ (see 
\cite{kohn-mueller:92,kohn-mueller:94,conti:06,diermeier:10,zwicknagl:14,conti-zwicknagl:15}). These studies suggest a transition between uniform structures and the formation of 
microstructures. Some of the results have been extended also to the vector-valued case (see 
\cite{chan:13,chan-conti:14,chan-conti:14-1,diermeier:13,CapellaOtto2012,CapellaOtto2009,knuepfer-kohn:11,knuepfer-kohn-otto:13}). Similar phenomena have been found for a variety of 
variational models, with applications including pattern formation in ferromagnets (see  \cite{choksi-kohn:98,choksi-et-al:98,otto-viehmann:10,knuepfer-muratov:11}), in type-1-superconductors {{(see 
\cite{choksi-et-al:04,choksi-et-al:08,conti-et-al:15})}}, and in thin compressed films (see \cite{BCDM00,JinSternberg2,JinSternberg1,belgacem-et-al:02,bella-kohn:14}).
{In many situations, peculiar structures arise in the limit of small volume fraction, which corresponds 
to the limit $\theta\to0$ considered in this paper \cite{choksi-et-al:04,choksi-et-al:08,diermeier:10,zwicknagl:14,conti-zwicknagl:15}.}

Typical test functions that can be used to prove the second inequality in \eqref{eq:sl} are sketched in Figure \ref{fig:scaling}. 
\begin{figure}
\centerline{ \includegraphics[height=5cm]{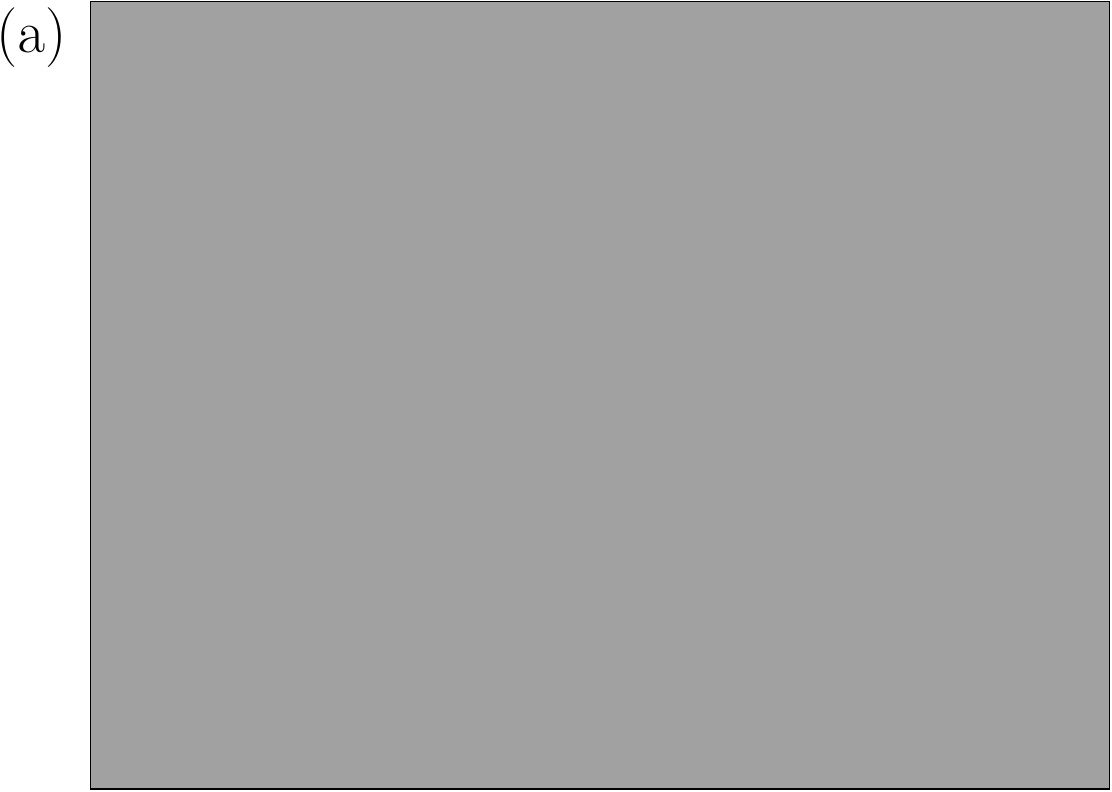}\hskip.8cm
 \includegraphics[height=5cm]{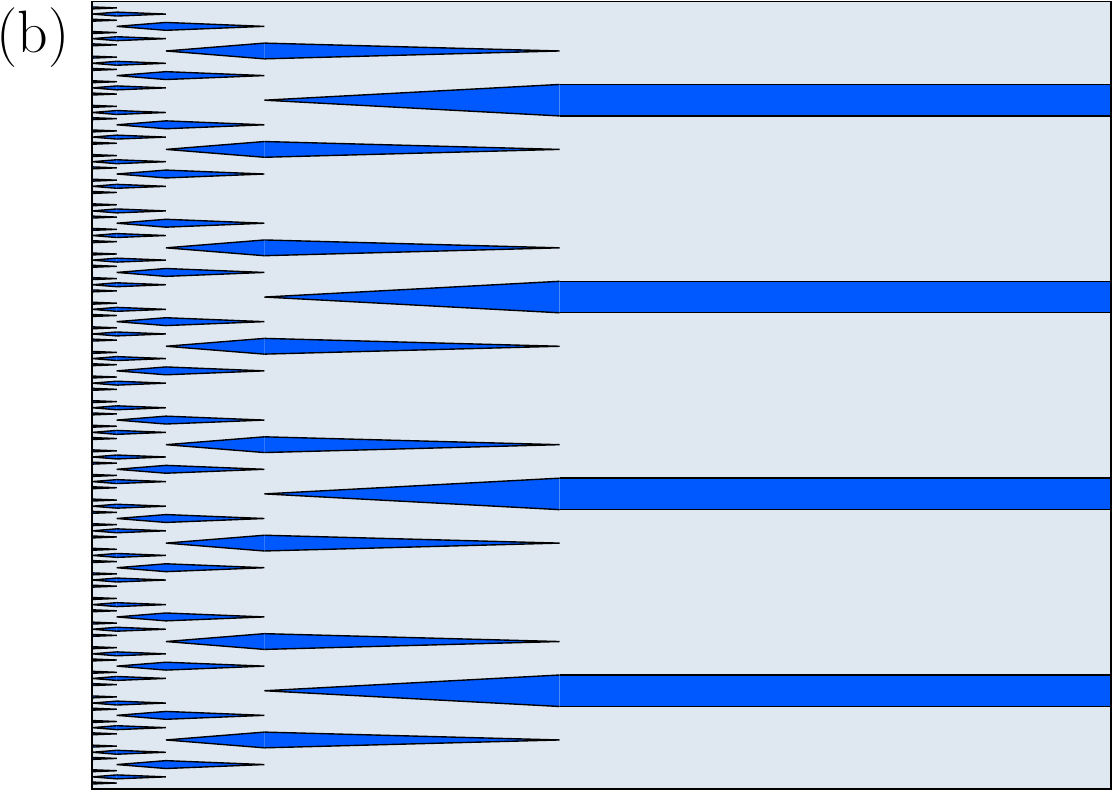}}
 \caption{(a) uniform test function $v_c=0$. (b) sketch of the branching construction $v_b$. The interface to austenite
 is at the left edge. {{Different colors indicate different values of the order parameter $\partial_2v$.}}}
\label{fig:scaling}
\end{figure}
On the one hand, the constant function $v_c:=0$ 
satisfies $I_{\theta,\eps}^p(v_c)=\theta^p$. On the other hand, if $\eps \leq \theta^p$, then a self-similarly refining function  $v_b$ as introduced in \cite[Lemma 2.1]{kohn-mueller:94} yields 
$I_{\theta,\eps}^p(v_b)\leq c\theta^p(\eps/\theta^p)^{p/(p+1)}$  (see appendix for a precise definition). For low-hysteresis shape memory alloys, one expects both parameters $\theta$ and $\eps$  to 
be small. If one of them is much smaller than the other, then one of the two mentioned regimes is 
expected to dominate the picture, as indicated by the scaling in Theorem \ref{th:scaling}. 
Theorem \ref{th:gammalimit}
addresses
the transition regime $\eps\sim\theta^p$ in the asymptotic situation $\eps$, $\theta\to 0$, and provides a step towards the better understanding of the 
onset of microstructures.

The remaining part of the paper is structured as follows. After setting some notation in {{S}}ection \ref{sec:notation}, we prove the lower bound and the accompanying compactness result, i.e., parts (i) 
and (ii) of Theorem  
\ref{th:gammalimit} in Section \ref{sec:comp}. In Section \ref{sec:ub} we prove the upper bound, i.e., part (iii) of Theorem \ref{th:gammalimit}. This is done in two steps. First,  in Subsection 
\ref{sec:recsq}, we provide explicit constructions of recovery sequences for functions whose jump sets are finite unions of segments.  Subsequently, in 
Subsection \ref{sec:density} we prove a density result which shows that it suffices to consider those simpler functions. 

\section{Notation and basic facts}\label{sec:notation}
The parameters $p\in(1,\infty)$ and $\sigma\in (0,\infty)$ are arbitrary but fixed. We write briefly
$E_\theta:=E_{\sigma,\theta}^p$ 
and $E:=E_\sigma^p$. We write $c$ and $C$ to denote generic positive constants that may change from expression to expression. We denote by $e_1$ and $e_2$ the standard basis vectors in $\R^2$. For a 
measurable set $\Omega\subset\R^n$ we denote by $|\Omega|$ its $n$-dimensional Lebesgue measure. We often do not relabel subsequences.\\
We briefly collect some definitions and properties we use in {{this work}}. For precise definitions, details and proofs we refer to \cite{AmbrosioFuscoPallara,evans-gariepy}. Let $\Omega\subset\R^2$ be 
open. 
For $u\in BV(\Omega)$, we denote the distributional gradient by $Du$, the approximate differential by $\nabla u$, the jump set by $J_u$, and the jump function by $[u]$. The jump set $J_u$ is 
countably $\calH^1$-rectifiable, and we denote the generalized normal by $\nu_u:J_u\to S^1$. We will use the decomposition $Du=D^{ac}u+D^Ju+D^Cu$, where $D^{ac}u=\nabla u\calL^2$ denotes  the absolutely continuous 
part with respect to the Lebesgue measure $\calL^2$, $D^Ju=[u]\nu_u\calH^1\LL J_u$ denotes the jump part and $D^Cu$ is the Cantor part. We denote the singular part by $D^Su:=D^Ju+D^Cu$.
The space $SBV^p(\Omega)$ contains those  functions in $BV(\Omega)$ such that the
 Cantor part vanishes, the
 {approximate differential} is in $L^p$, and the jump set has finite length.

We will use various slicing arguments (see \cite[Section 3.11]{AmbrosioFuscoPallara}). On the one hand, we use that for $u\in BV(\Omega)$, the singular parts $D^Cu$ and $D^Ju$ can be obtained 
from the respective parts of the slices. On the other hand, we use specific properties of slices of admissible functions. 
In our analysis, the space $\overline{SBV}^p_y$ as defined in \eqref{eq:admlimit} plays a crucial role. We point out that for the density part (see {{S}}ubsection \ref{sec:density}), we work with  a 
slightly different space $SBV_y^p(\Omega)$ defined on general open sets $\Omega\subset\R^2$, i.e., 
\begin{eqnarray}\label{eq:sbvpy}
SBV^p_y(\Omega):=\{u\in SBV^p(\Omega):  [u]\nu_u\in [0,\infty)e_2 \text{ }\calH^1-\text{ a.e.}\}.
\end{eqnarray}
We remark that the definition of the space $\overline{SBV_y^p}$ includes the boundary data,
whereas functions in  $SBV_y^p((0,1)^2)$ {{do not need to satisfy them}}. On the other hand, 
 $SBV_y^p$ is a subset of $SBV^p$, {{while $\overline{SBV_y^p}$ is not}} (see also Lemma \ref{lemmasbvpy} and Example \ref{example} below).

If $u\in SBV^p_y(\Omega)$, then for $\calH^1$-almost every $x_2\in\R$ the horizontal slice $u^{x_2}(t):=u(t,x_2)$ belongs to $W^{1,p}(I_{x_2})$, where
$I_{x_2}:=\{t: (t,x_2)\in\Omega\}$, and its derivative is given by $(u^{x_2})'(t)=\partial_1 u(t,x_2)$. 
Correspondingly, for $\calH^1$-almost every $x_1\in\R$ the vertical slice $u^{x_1}(s):=u(x_1,s)$ belongs to $SBV^p(I_{x_1})$, where
$I_{x_1}:=\{s: (x_1,s)\in\Omega\}$, with derivative $Du^{x_1}=\partial_2 u{\calL^1} + [u]\calH^0\LL \{s: (x_1,s)\in J_u\}$.

\section{Compactness and lower bound}\label{sec:comp}
In this section we prove parts (i) and (ii) of Theorem \ref{th:gammalimit}.
\begin{proposition}\label{prop:comp}
Suppose that $M>0$, $\theta_k\to 0$ and $u_k\in\calA$ are such that $E_{\theta_k}(u_k)\leq M$ for all $k\in\N$. Then there exist a subsequence 
and $u\in \overline{SBV}_y^p$ such that 
$u_k\to u$ strongly in $L^1((0,1)^2)$, 
$u_k\overset{\ast}{\rightharpoonup} u$ in $BV((0,1)\times(\delta,1-\delta))$ for all $\delta\in (0,\frac{1}{2})$, 
and $E(u)\leq\liminf_{k\to\infty}E_{\theta_k}(u_k)$.
\end{proposition}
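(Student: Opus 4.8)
The plan is to prove Proposition \ref{prop:comp} by combining the $SBV$ compactness theorem (Ambrosio) with a careful analysis of the structure that the energy bound $E_{\theta_k}(u_k)\le M$ imposes on the sequence. The key point is that, away from the top and bottom boundary, the bound $\sigma\theta_k|D^2u_k|((0,1)^2)\le M$ controls the total variation of $\partial_2 u_k$, and the bound on the bulk term controls $\partial_1 u_k$ in $L^p$; together these give a uniform $BV$ bound on $u_k$ on strips $(0,1)\times(\delta,1-\delta)$. First I would fix $\delta\in(0,1/2)$ and show that $|Du_k|((0,1)\times(\delta,1-\delta))$ is bounded uniformly in $k$: the $\partial_1 u_k$ part is bounded in $L^p\subset L^1$ directly; for $\partial_2 u_k$, the bulk term forces $\partial_2 u_k$ to be close (in $L^p$) to a function taking values in $\{0,1/\theta_k\}$, but more usefully, one slices in the $x_2$-direction — on a.e. vertical slice $u_k^{x_1}\in SBV^p(0,1)$ with $|Du_k^{x_1}|$ controlled by the slice of $|D^2u_k|$ plus the constraint $u_k^{x_1}(0)$-related data, and integrating over $x_1$ gives the $L^1$ bound on the strip, but the cleanest route is to note $|D\partial_2 u_k|((0,1)^2)\le M/(\sigma\theta_k)$ does not immediately help, so instead one uses that on the strip the ``wrong phase'' region where $\partial_2 u_k\approx 1/\theta_k$ must have small measure (of order $\theta_k^p$ by the bulk term) while the jump of $Du_k$ across its boundary has size $\sim1/\theta_k$, so the perimeter of that region is $\lesssim M\theta_k/(\sigma\theta_k)=M/\sigma$; combined with the boundary datum $u_k(0,x_2)=x_2$ this pins down $u_k$ up to bounded $BV$ norm on the strip.

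Once the uniform $BV$ bound on each strip is established, I would extract via a diagonal argument over $\delta=1/n$ a subsequence and a limit $u\in BV_{\mathrm{loc}}((0,1)^2)$ with $u_k\overset{\ast}{\rightharpoonup}u$ on each strip; strong $L^1$ convergence on $(0,1)^2$ follows from an $L^1$ bound on the full square (using the boundary datum to control the ``thin'' part near $x_2=0,1$, noting $\partial_2 u_k$ there is also controlled since the bad region has small measure) together with equi-integrability, or more simply by proving an $L^1$-equicontinuity/tightness statement near the top and bottom edges. Next I would identify the limit: applying Ambrosio's $SBV$ closure theorem on each strip, using that $D^Cu_k=0$ (functions in $\calA$ have $\partial_2 u_k\in BV$, hence $u_k\in W^{1,p}$ with $\partial_i u_k\in BV$, so actually $u_k$ itself need not be $SBV$ — here one must be careful) — the correct statement is that $u_k\in W^{2,1}_{\mathrm{loc}}$-like objects whose gradients are $BV$; to get an $SBV$ limit one should instead track the vertical slices $u_k^{x_1}$, which ARE in $SBV^p$ once one collapses the thin transition layers, and pass to the limit sliically to show $u\in SBV^p_{\mathrm{loc}}$ with $\nabla u\in L^p$. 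The orientation constraint $[u]\nu_u\in[0,\infty)e_2$ comes out because the jumps of $u$ arise only from the collapse of layers where $\partial_2 u_k$ swings from $\sim 0$ up through $1/\theta_k$ and back, so the net jump is upward and vertical; the boundary datum $u(0,x_2)=x_2$ is preserved by the $L^1$ (in particular trace) convergence on strips, giving $u\in\overline{SBV}^p_y$.

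Finally, for the lower bound inequality $E(u)\le\liminf_k E_{\theta_k}(u_k)$, I would use the standard lower-semicontinuity machinery: the bulk term $\int|\partial_1 u_k|^p$ is lower semicontinuous under weak $L^p$ convergence of $\partial_1 u_k$ (which follows from the $BV$-weak-* convergence plus the $L^p$ bound), giving $\int|\partial_1 u|^p\le\liminf\int|\partial_1 u_k|^p$; the term $\int\min\{|\partial_2 u_k|^p,|\partial_2 u_k-1/\theta_k|^p\}$ together with half of the interfacial term $\sigma\theta_k|D^2u_k|$ should be shown, via a slicing argument in $x_2$ and a one-dimensional lower-bound lemma, to bound below $\int|\partial_2 u|^p+2\sigma\calH^1(J_u)$ — this is where one proves that a vertical slice $u_k^{x_1}$, which travels from the ``0-phase'' to the ``$1/\theta_k$-phase'' and back $N$ times, must pay at least $\sim\theta_k\cdot(1/\theta_k)=O(1)$ in second-derivative variation per excursion (two jumps of size $1/\theta_k$, times $\sigma\theta_k$, gives $2\sigma$ per excursion, matching the $2\sigma\calH^1(J_u)$ in the limit), plus the $L^p$ cost of $\partial_2 u$ away from the layers, by a lower-semicontinuity argument for the $SBV^p$ slice functional (Ambrosio–Braides type). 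I expect the main obstacle to be this last step — carefully disentangling which portion of $\sigma\theta_k|D^2u_k|$ is ``spent'' reproducing the limiting jump set $J_u$ (factor $2\sigma$) versus what is lost, and doing the slicing rigorously given that $u_k$ is not in $SBV$ but only has $BV$ gradient, so one works with the measures $D\partial_2 u_k^{x_1}$ on slices and must relate a localized version of the $2$-dimensional functional to the integral of the $1$-dimensional ones while keeping the phase-transition geometry under control.
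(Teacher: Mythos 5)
Your overall strategy matches the paper's: bound $u_k$ in $W^{1,1}$ on each strip $(0,1)\times(\delta,1-\delta)$, extract a $BV$-weak-$\ast$ limit via a diagonal argument, identify $u\in\overline{SBV}^p_y$ by slicing in the $x_2$-direction, and obtain the lower bound by a one-dimensional count of transitions on vertical slices. But there is a concrete error in the central step: you claim the ``wrong phase'' region $B_k^\delta$ where $\partial_2 u_k$ is close to $1/\theta_k$ ``must have small measure (of order $\theta_k^p$ by the bulk term).'' This is false. The bulk term $\int\min\{|\partial_2 u_k|^p,|\partial_2 u_k - 1/\theta_k|^p\}$ vanishes identically wherever $\partial_2 u_k=1/\theta_k$ exactly, so it gives \emph{no} control on $|B_k^\delta|$. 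The paper's bound is $|B_k^\delta|\lesssim\theta_k/\delta$, and it comes from an entirely different mechanism: the boundary condition $u_k(0,x_2)=x_2$ together with the $L^p$ bound on $\partial_1 u_k$ forces $\|u_k\|_{L^1((0,1)^2)}\le M^{1/p}+1/2$, and since $\partial_2 u_k\ge 1/(2\theta_k)$ on $B_k^\delta$, integrating $\partial_2 u_k$ vertically across the strip shows that $u_k$ would grow unboundedly in $L^1$ unless $|B_k^\delta|\lesssim\theta_k$. You do invoke the boundary datum later, but not at the point where it is actually needed. Without this correction you have not established the uniform $W^{1,1}(\text{strip})$ bound, which is the foundation for compactness, the strong $L^1$ convergence, and the identification $D^S u\cdot e_2\ge 0$ via the decomposition $\partial_2 u_k = (\partial_2 u_k)\chi_{A_k^\delta} + (\partial_2 u_k)\chi_{B_k^\delta}$.

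A second, smaller point: you correctly observe that $u_k\notin SBV$ (its slices are continuous $W^{1,p}$ functions), and you correctly expect the ``hard step'' to be relating $\theta_k|(u_k^{x_1})''|$ to $2\calH^0(J_{u^{x_1}})$, but you describe this as an expected obstacle rather than resolving it. The paper's resolution is a specific coarea/superlevel-set argument: for a.e.\ slice one considers $P_k^{x_1}(s)=\{x_2:(u_k^{x_1})'(x_2)>s\}$, uses the one-dimensional coarea formula to bound the average over $s\in[\eta/\theta_k,(1-\eta)/\theta_k]$ of the boundary count of $P_k^{x_1}(s)$ by $\theta_k|(u_k^{x_1})''|/(1-2\eta)$, picks a good level $t_k$, tracks the midpoints of the resulting intervals, and after passing to the limit obtains a finite set $K$ with $2\calH^0(K)\le\liminf_k\theta_k|(u_k^{x_1})''|/(1-2\eta)$, outside of which $u_k^{x_1}$ converges weakly in $W^{1,p}$ (after a one-sided comparison $\alpha\le\eta^{-1}|\alpha-1/\theta_k|$ valid for $\alpha\le(1-\eta)/\theta_k$). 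This is the argument your sketch is gesturing at; as written, the heuristic ``$\sim 2\sigma$ per excursion'' identifies the right constant but does not yet constitute a proof.
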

\begin{proof}
 {\em Step 1: A priori bounds and identification of $u$.} We have  
  \begin{eqnarray}\label{eq:unifdx}
 \|\partial_1u_k\|_{L^1((0,1)^2)}\leq\|\partial_1u_k\|_{L^p((0,1)^2)}\leq M^{1/p},
 \end{eqnarray}
and thus, using $u_k(0,x_2)=x_2$,
 \begin{eqnarray}\label{eq:unifL1}
  \|u_k\|_{L^1((0,1)^2)}\leq \|\partial_1u_k\|_{L^1((0,1)^2)}+\|u_k(0,\cdot)\|_{L^1((0,1))} \leq M^{1/p}+\frac{1}{2}.\
   \end{eqnarray}
For $\delta\in (0,1/2)$ set
\[A_k^{\delta}:=\left\{(x_1,x_2)\in(0,1)\times(\delta,1-\delta):\ |\partial_2u_k(x_1,x_2)|\leq|\partial_2u_k(x_1,x_2)-\frac{1}{\theta_k}|  \right\} \]
and $B_k^\delta:=((0,1)\times(\delta,1-\delta))\setminus A_k^\delta$. Then, 
\begin{eqnarray}\label{eq:LpAkdelta}
\|(\partial_2u_k)\chi_{A_k^\delta}\|_{L^1((0,1)^2)}\leq\|(\partial_2u_k)\chi_{A_k^\delta}\|_{L^p((0,1)^2)}\leq M^{1/p},
\end{eqnarray}
and similarly, using $|\partial_2u_k|\leq|\partial_2u_k-\frac{1}{\theta_k}|+\frac{1}{\theta_k}$, we obtain
\begin{equation}\label{eq:unifpartialy}
 \|(\partial_2 u_k)\chi_{B_k^\delta}\|_{L^1((0,1)^2)}\leq M^{1/p}+\frac{1}{\theta_k}|B_k^\delta|.
\end{equation}
Since 
$\partial_2u_k\geq\frac{1}{2\theta_k}$ in $B_k^\delta$, we obtain
\begin{eqnarray*}                                                                                                                 
 \int_0^1\left(u_k(x_1,1-\delta)-u_k(x_1,\delta)\right)\dx=\int_{B_k^\delta}\partial_2u_k\dxy+\int_{A_k^\delta}\partial_2u_k\dxy\geq |B_k^\delta|\frac{1}{2\theta_k}-M^{1/p}.
 \end{eqnarray*}
Therefore, for almost every $x_2\in(0,\delta)$, we have
 \begin{eqnarray*}
 && \int_0^1\left(u_k(x_1,1-\delta+x_2)-u_k(x_1,x_2)\right)\dx\\
  &=&\int_0^1\left(u_k(x_1,1-\delta)-u_k(x_1,\delta)+\int_{1-\delta}^{1-\delta+x_2}\partial_2u_k(x_1,t)\dt+\int_{x_2}^\delta\partial_2u_k(x_1,t)\dt\right)\dx\\
  &\geq&|B_k^\delta|\frac{1}{2\theta_k}-M^{1/p}+\int_{(0,1)^2}\min\{\partial_2u_k(x_1,x_2),0\}\dx\geq\frac{1}{2\theta_k}|B_k^\delta|-2M^{1/p}
 \end{eqnarray*}
 where in the last step we used (\ref{eq:LpAkdelta}).
Hence,
\begin{eqnarray*}
\|u_k\|_{L^1((0,1)^2)}&\geq& \int_0^1\int_0^\delta(|u_k(x_1,1-\delta+x_2)|+|u_k(x_1,x_2)|)\dy\dx\\
&\geq&\int_0^1\int_0^\delta(u_k(x_1,1-\delta+x_2)-u_k(x_1,x_2))\dy\dx\\&\geq&\delta(\frac{1}{2\theta_k}|B_k^\delta|-2M^{1/p}),
\end{eqnarray*}
and thus, combining with \eqref{eq:LpAkdelta}, \eqref{eq:unifpartialy} and \eqref{eq:unifL1}, we obtain $\|\partial_2 u_k\|_{L^1((0,1)\times(\delta,1-\delta))}\leq\frac{C}{\delta}(M^{1/p}+1)$.
Putting things together, we have 
\begin{equation}\label{eqw11deltabound}
 \|u_k\|_{W^{1,1}((0,1)\times(\delta,1-\delta))}\leq \frac{C}{\delta}(M^{1/p}+1).
\end{equation}
Therefore, by a diagonal sequence argument, there are a subsequence 
and a limit function {$u:(0,1)^2\to\R$} such that $u_k\overset{\ast}{\rightharpoonup}u$ in $BV((0,1)\times(\delta,1-\delta))$ for all $\delta\in(0,\frac{1}{2})$. 
Further, $u_k\to u$ strongly in $L^1((0,1)\times(\delta,1-\delta))$ for all $\delta$. The same argument in (\ref{eq:unifL1})
shows that $\|u_k\|_{L^1((0,1)\times {((0,\delta)\cup (1-\delta,1))})} \le M^{1/p}\delta^{1/p'}+{2\delta}$. Therefore $u_k\to u$ strongly in $L^1((0,1)^2)$.

{\em Step 2: Show that $u\in \overline{SBV}_y^p$.} By \eqref{eq:unifdx}, there is a subsequence such that $\partial_1u_k\rightharpoonup Du\cdot e_1$ weakly in $L^p((0,1)^2)$, and in particular $Du\cdot e_1\in L^p((0,1)^2)$. Since $e_1$ is the normal to the Dirichlet boundary $\{x_1=0\}$, it follows as in the proof of Rellich's compact embedding theorem on the boundary that $u(0,x_2)=x_2$ in the sense of traces.  
Step 1 also yields $\partial_2u_k\overset{\ast}{\rightharpoonup}Du\cdot e_2$ in $\calM((0,1)\times(\delta,1-\delta))$. By \eqref{eq:LpAkdelta}, there are a subsequence and $v\in L^p((0,1)^2)$ such that $(\partial_2u_k)\chi_{A_k^\delta}\rightharpoonup v$ weakly in $L^p((0,1)^2)$. Hence, we have in $\calM((0,1)\times(\delta,1-\delta))$,
\[0\leq (\partial_2u_k)\chi_{B_\delta^k}=Du_k\cdot e_2-(\partial_2 u_k)\chi_{A_\delta^k}\overset{\ast}{\rightharpoonup} Du\cdot e_2-v=D^{ac}u\cdot e_2+D^Su\cdot e_2-v. \]
Since $D^Su\cdot e_2\perp (D^{ac}u\cdot e_2-v)$, we conclude that $D^Su\cdot e_2\geq 0$.

It remains to show that $u\in SBV_{\text{loc}}$ and that $D^{ac}u\cdot e_2-v\in L^p$. We refine the previous argument, and proceed by slicing (see \cite[Theorems 3.107 and 3.108]{AmbrosioFuscoPallara}), i.e., for almost every $x_1$, we consider the slice $u^{x_1}(\cdot):=u(x_1,\cdot)$ and show that it has locally a finite number of jumps, no Cantor part, and an absolutely continuous part which is contained in $L^p$.

By Fatou's lemma, the assumption
 $E_{\theta_k}(u_k)\leq M$  implies
that 
there is $m\in L^1((0,1))$ such that
\begin{equation}\label{eq:goodslice}\liminf_{k\to\infty}\int_0^1\min\{|(u_k^{x_1})'(x_2)|^p,|(u_k^{x_1})'(x_2)-\frac{1}{\theta_k}|^p\}\dy+\sigma\theta_k|(u_k^{x_1})''|((0,1))\leq m(x_1)
\end{equation}
for almost every $x_1$.
Analogously, by (\ref{eqw11deltabound}) {{and \eqref{eq:unifL1},}} for any $\delta$ there is $m_\delta\in L^1((0,1))$ such that
\begin{equation}\label{eq:goodslice2}
\liminf_{k\to\infty}\left(\int_\delta^{1-\delta}|(u_k^{x_1})'(x_2)|\dy+\int_0^1|u_k^{x_1}(x_2)|\dy\right)\leq m_\delta(x_1)
\end{equation}
for almost every $x_1$. 
After extracting a further subsequence $u_k^{x_1}\to u^{x_1}$ in $L^1((0,1))$
for almost every $x_1$. 
Fix one such $x_1\in(0,1)$ and $\delta\in(0,1/2)$.
By (\ref{eq:goodslice2}) the sequence $u_k^{x_1}$ is bounded in $W^{1,1}((\delta,1-\delta))$, therefore  $u_k^{x_1}\overset{\ast}{\rightharpoonup} u^{x_1}$ in $BV((\delta,1-\delta))$.

For $s\in \R$ we define
\begin{eqnarray*}
P_k^{x_1}(s):=\{x_2\in(\delta,1-\delta): (u_k^{x_1})'(x_2)>s\}.
\end{eqnarray*}
Condition (\ref{eq:goodslice}) implies continuity of $u_k^{x_1}$,
therefore for any $s\in\R$ the set
 $P_k^{x_1}(s)$ is a countable union of intervals.
By the coarea formula one has
\begin{equation*}
\sigma\theta_k
 \int_\R {{\mathcal{H}^0(}} \partial_{(\delta,1-\delta)} P_k^{x_1}(s){{)\ds}}= \sigma\theta_k|(u_k^{x_1})''|((\delta,1-\delta))\,.
\end{equation*}
Here $\partial_{(\delta,1-\delta)} P$ is the part of the measure-theoretic boundary of $P$ 
which is contained in $(\delta,1-\delta)$.
By (\ref{eq:goodslice}), for large enough $k$ the quantity on the right is smaller
than $2m(x_1)$.

We fix $\eta\in(0,1/4)$, and  choose $t_k\in {{[}}\eta/\theta_k, (1-\eta)/\theta_k{{]}}$ which minimizes
(in this interval) the quantity ${\calH^0}( \partial_{(\delta,1-\delta)} P_k^{x_1}(\cdot))$. In particular,
$P_k^{x_1}(t_k)$
is the union of at most $2m(x_1)/\sigma$ {disjoint} open intervals.
Let  $\{y_k^{(j)}:j\in\mathcal{J}\}$ be the set of midpoints
of the intervals that constitute  $P_k^{x_1}(t_k)$. Since their number is bounded, after extracting
a further subsequence we can assume $y_k^{(j)}\to z^{(j)}$, not all necessarily distinct. 
Let $K:=\{z^{(j)}: j\in\mathcal{J}\}\cap(\delta,1-\delta)$. For later reference we note that
\begin{equation}\label{eqbdjumppoints}
 2 {\calH^0(} K) \le \liminf_{k\to\infty}  {{\mathcal{H}^0(}} \partial_{(\delta,1-\delta)} P_k^{x_1}(t_k){{)}}
 \le  \liminf_{k\to\infty} \frac{\theta_k}{1-2\eta} |(u_k^{x_1})''|((\delta,1-\delta))\,.
\end{equation}
Fix now $\eps>0$, and let $I_\eps=(\delta+\eps,1-\delta-\eps)\setminus \bigcup_j (z^{(j)}-\eps,z^{(j)}+\eps)$. 
By (\ref{eq:goodslice2}) we have $|P_k^{x_1}(t_k)|\to0$. 
Since $y_k^{(j)}\to z^{(j)}$, 
for $k$ large enough we have $P_k^{x_1}(t_k)\cap I_\eps=\emptyset$. Since $\alpha\le (1-\eta)/\theta_k$ implies
$\alpha\le \eta^{-1} {|\alpha-1/\theta_k|}$ we obtain (for sufficiently large $k$)
\begin{equation*}
 \int_{I_\eps} |(u_k^{x_1})'|^p \dy\le \frac{1}{\eta^p} 
 \int_0^1\min\{|(u_k^{x_1})'(x_2)|^p,|(u_k^{x_1})'(x_2)-\frac{1}{\theta_k}|^p\}\dy\le \frac{2}{\eta^p} m(x_1).
\end{equation*}
Therefore $u_k^{x_1}\chi_{I_\eps}$ has a weak limit in $W^{1,p}(I_\eps)$, 
which coincides with $u^{x_1}$ and obeys
$\|(u^{x_1})'\|_{L^p(I_\eps)}^p \le 2\eta^{-p} m(x_1)$. Since $\eps$ was arbitrary, we conclude that
$u^{x_1}\in W^{1,p}((\delta,1-\delta)\setminus K)$;
since the set of jump points is finite, this space is a subset of $SBV^p((\delta,1-\delta))$,
 $u^{x_1}$ can only jump in the points of $K${{, and the jumps are positive}}.
Finally, since $\|(u^{x_1})'\|_{L^p{{(}}(\delta,1-\delta){{)}}}^p \le 2\eta^{-p} m(x_1)$ for all $\delta$,
we conclude $\|(u^{x_1})'\|_{L^p((0,1))}^p \le 2\eta^{-p} m(x_1)$.
Hence $u\in SBV^p_\mathrm{loc}((0,1)^2)$, with $\nabla u\in L^p((0,1)^2;\R^2)$.

{\em Step 3: Lower bound. } For the same  slices $u^{x_1}$ {{as in Step 2}}, we have 
by \eqref{eqbdjumppoints}
\[2\sigma(1-2\eta)\calH^0((\delta,1-\delta)\cap J_{u^{x_1}})\leq\liminf_{k\to\infty}\sigma\theta_k|(u_k^{x_1})''|((0,1)). \]
\sloppypar
Since $\delta$ and $\eta\in(0,1/4)$ were arbitrary, it follows that $2\sigma\calH^0(J_{{u^{x_1}}}) \leq \liminf_{k\to\infty}\sigma\theta_k|(u_k^{x_1})''|((0,1))$. Recall from Step 2 that $D^Su\cdot e_1=0$, and thus we 
have
\[\int_0^1\calH^0(J_{u^{x_1}})\dx=\calH^1(J_u). \]
Combining this with the weak $L^p$-convergences of the regular parts, we conclude by Fatou's lemma that
\begin{eqnarray*}
&&E(u)=\int_{(0,1)^2}(|\partial_1u|^p+|\partial_2u|^p)\dxy+2\sigma\calH^1(J_{u})\\
&\leq&\liminf_{k\to\infty}\int_{(0,1)^2}|\partial_1u_k|^p\dxy\\
&&+\liminf_{k\to\infty}\int_0^1\left(\int_0^1\min\{|(u_k^{x_1})'|^p,|(u_k^{x_1})'-\frac{1}{\theta_k}|^p\}\dy+\sigma\theta_k|(u_k^{x_1})''|((0,1))\right)\dx\\
&\leq&\liminf_{k\to\infty}E_{\theta_k}(u_k).
\end{eqnarray*}
This {{finishes}} the proof.
\end{proof}
Let us note that the problem does not admit global weak{{-}}$\ast$ convergence in $BV((0,1)^2)$, as the following example shows. For details and discussions we refer to \cite{diermeier:16}. 
\begin{example}\label{example}
{{Let $\alpha\in(p/(p+1),1)$, and set
\[u_k(x_1,x_2):=\begin{cases}
\frac{1}{\theta_k}x_2+\left(1-\frac{1}{\theta_k}\right)\left(1-\theta_k^\alpha x_1\right)&\text{\qquad if \ }x_2\geq 1-\theta_k^{\alpha}x_1\\
x_2&\text{\qquad otherwise}.
\end{cases} \]}}
Then $E_{\theta_k}(u_k)\leq C$ but {{$\|\partial_2u_k\|_{L^1((0,1)^2)}$ is unbounded, and hence}} there is no subsequence that converges weakly-$\ast$ in $BV((0,1)^2)$. \\
This issue {{could}} be overcome by imposing periodic boundary conditions at top and bottom of the square, i.e., $u(x_1,1)-u(x_1,0)=1$.
\end{example}

\begin{lemma}\label{lemmasbvpy}
 Let $u\in \overline{SBV}^p_y$, $\delta\in (0,1/2)$. Then 
$|Du|((0,1)\times(\delta,1-\delta))<\infty${{.}}
\end{lemma}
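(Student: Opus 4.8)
The goal is to show that a function $u\in\overline{SBV}^p_y$ has finite total variation on every horizontal strip $(0,1)\times(\delta,1-\delta)$. The natural approach is to slice in the vertical direction and control each slice, then integrate, exactly in the spirit of Step 2 of the previous proof. By definition $u\in SBV_{\mathrm{loc}}((0,1)^2)$ with $\nabla u\in L^p((0,1)^2;\R^2)$, so the absolutely continuous part of $Du$ already has finite mass on the whole square; the only thing to bound is the jump part $[u]\nu_u\calH^1\LL J_u$ restricted to the strip (there is no Cantor part since $u\in SBV_{\mathrm{loc}}$). Since $[u]\nu_u\in[0,\infty)e_2$ a.e.\ on $J_u$, the jump contributes only through $Du\cdot e_2$, and it suffices to bound $|D(u\cdot)\cdot e_2|$ on the strip, equivalently to bound $\int_\delta^{1-\delta}|\partial_2 u|\dyy$ plus the total size of the (positive) jumps in the vertical slices.

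\textbf{Key steps.} First I would fix $\delta\in(0,1/2)$ and, using the vertical-slicing results quoted in Section~\ref{sec:notation} (i.e.\ $u^{x_1}\in SBV^p(I_{x_1})$ with $Du^{x_1}=\partial_2 u\,\calL^1+[u]\calH^0\LL\{s:(x_1,s)\in J_u\}$ for a.e.\ $x_1$), reduce to estimating, for a.e.\ $x_1$,
\[
\int_\delta^{1-\delta}|\partial_2 u(x_1,x_2)|\,\dyy \;+\; \sum_{x_2\in J_{u^{x_1}}\cap(\delta,1-\delta)} [u](x_1,x_2).
\]
The first term is controlled by H\"older's inequality and $\nabla u\in L^p$. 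For the jump sum, I would use the boundary condition $u(0,x_2)=x_2$ together with the sign of the jumps: since all vertical jumps of $u$ are nonnegative and $\partial_2 u^{x_1}=\partial_2 u$ off $J_u$, one has for the good slice, integrating from $x_2=\delta$ to $x_2=1-\delta$,
\[
u(x_1,1-\delta)-u(x_1,\delta)=\int_\delta^{1-\delta}\partial_2 u(x_1,x_2)\,\dyy+\sum_{x_2\in J_{u^{x_1}}\cap(\delta,1-\delta)}[u](x_1,x_2),
\]
so the jump sum equals $u(x_1,1-\delta)-u(x_1,\delta)-\int_\delta^{1-\delta}\partial_2 u\,\dyy$, which is bounded once I control the pointwise values $u(x_1,1-\delta)$ and $u(x_1,\delta)$. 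Those in turn follow from $u(0,\cdot)=\mathrm{id}$ and the horizontal-slicing fact $u^{x_2}\in W^{1,p}(I_{x_2})$ with $(u^{x_2})'=\partial_1 u$: writing $u(x_1,x_2)=u(0,x_2)+\int_0^{x_1}\partial_1 u(t,x_2)\,\mathrm{d} t=x_2+\int_0^{x_1}\partial_1 u(t,x_2)\,\mathrm{d} t$ and using $\partial_1 u\in L^p$ gives $\|u(\cdot,x_2)\|_{L^\infty}$-type bounds after a further integration, or at least $L^1$ control in $x_2$ over a set of full measure, which is enough after integrating the slice estimate in $x_1$.

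\textbf{Main obstacle.} The delicate point is not the slicing identities themselves (these are standard and quoted), but making the boundary value $u(0,\cdot)=x_2$ interact correctly with the slices: one must argue that for a.e.\ horizontal line $x_2$ the slice $u^{x_2}$ has the correct trace at $t=0$, and that the resulting pointwise/integral bounds on $u(x_1,\delta)$ and $u(x_1,1-\delta)$ hold for a.e.\ $x_1$ in a way compatible with the vertical slicing — essentially a Fubini bookkeeping argument about null sets. The sign constraint $[u]\nu_u\in[0,\infty)e_2$ is what makes the jump sum a telescoping (monotone) quantity rather than something one has to estimate termwise, and this is the conceptual heart of the lemma; once that is in place, integrating the slice bound over $x_1\in(0,1)$ and adding back the already-finite absolutely continuous contribution yields $|Du|((0,1)\times(\delta,1-\delta))<\infty$.
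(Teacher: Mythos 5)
Your plan is correct and rests on the same key observation as the paper's proof: because $[u]\nu_u\in[0,\infty)e_2$, the jump contribution to $Du\cdot e_2$ is a nonnegative measure, so it is controlled by the \emph{difference} of endpoint values of $u$ along vertical lines minus the absolutely continuous part --- a telescoping quantity. You implement this by slicing vertically, applying the one-dimensional fundamental theorem for $BV$ to each slice $u^{x_1}$, controlling the endpoint values $u(x_1,\delta)$, $u(x_1,1-\delta)$ via horizontal slicing and the trace condition $u(0,\cdot)=\mathrm{id}$, and then integrating over $x_1$. The paper short-circuits all of this with a single two-dimensional integration by parts against a test function $\varphi\in C^1_c([0,1]\times(0,1);[0,1])$ that depends only (effectively) on $x_2$ and equals $1$ on the strip: the identity $\int u\,\partial_2\varphi=-\int(\partial_2 u)\varphi-\int_{J_u}[u]\varphi\,\mathrm{d}\calH^1$ immediately gives $|D^Ju|((0,1)\times(\delta,1-\delta))\le\|\partial_2 u\|_{L^1}+\|\nabla\varphi\|_{L^\infty}\|u\|_{L^1}$, with $\|u\|_{L^1}<\infty$ following from the boundary condition and $\partial_1 u\in L^p$ exactly as you describe. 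The two routes are equivalent in content --- the 2D integration by parts with an $x_2$-only test function is precisely the integral over $x_1$ of your one-dimensional slice argument --- but the paper's version avoids the ``Fubini bookkeeping'' you rightly flag as delicate (choosing good slice levels near $\delta$, $1-\delta$ so the precise representative is integrable, extracting full-measure sets of good $x_1$, etc.), since the test function smears out the endpoint evaluations. If you were to complete your slicing version, you would need to replace the fixed levels $\delta$, $1-\delta$ by nearby $\delta'$, $1-\delta'$ chosen so that $u^{x_2}$ is an admissible horizontal slice there; this is routine but is exactly the sort of friction the integration-by-parts formulation sidesteps.
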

\begin{proof}
Let $\varphi\in C^1_c({{[0,1]\times}}(0,1);[0,1])$, with $\varphi=1$ on ${{[0,1]\times}}(\delta,1-\delta)$. 
Then
\begin{equation*}
 \int_{(0,1)^2} u \partial_2 \varphi {{\dxy}} =
 -\int_{(0,1)^2} {{(}}\partial_2 u{{)}}  \varphi {{\dxy}} 
 -\int_{(0,1)^2\cap J_u} [ u]   \varphi {{\dH}}\,.
\end{equation*}
Since $[u]\ge 0$ almost everywhere,
\begin{equation*}
 |D^Ju|((0,1)\times(\delta,1-\delta)) 
 \le \int_{(0,1)^2\cap J_u} [u]   \varphi {{\dH}}
 \le \|\partial_2 u\|_{L^1((0,1)^2)} + \|\nabla \varphi\|_{L^\infty}  \| u\|_{L^1((0,1)^2)}\,.
\end{equation*}
\end{proof}
\section{Upper bound}\label{sec:ub}
In this section, we prove the upper bound, i.e., part (iii) of Theorem \ref{th:gammalimit}. We proceed in two steps, the main difficulty being the density result in Subsection \ref{sec:density}. The 
lines of the proof are outlined in the proof of Theorem \ref{th:ub} below. 
\begin{theorem}\label{th:ub}
Let $\theta_k\to 0$ and $u\in \overline{SBV}_y^p$. Then there exist $u_k\in\calA$ such that $u_k\to u$ in
$L^1((0,1)^2)$
and 
${{E}}(u)\geq\limsup_{k\to\infty}{{E_{\theta_k}}}(u_k)$. 
\end{theorem}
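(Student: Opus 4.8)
The plan is to reduce the upper bound to the case of functions with a simple jump geometry, and then construct recovery sequences explicitly for those. I would proceed in two stages, corresponding to the two subsections announced in the text.

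First, I would establish a density result: the subset of $\overline{SBV}_y^p$ consisting of functions $u$ whose jump set $J_u$ is a finite union of (relatively closed) horizontal or, more precisely, axis-compatible segments on which $[u]>0$, and which are (say) Lipschitz on $(0,1)^2\setminus J_u$, is dense in $\overline{SBV}_y^p$ in the sense that for every $u\in\overline{SBV}_y^p$ there is a sequence $u_j$ of such functions with $u_j\to u$ in $L^1$, $\nabla u_j\to\nabla u$ in $L^p$, and $\calH^1(J_{u_j})\to\calH^1(J_u)$, hence $E(u_j)\to E(u)$. This is the hard part, because standard $SBV^p$ density theorems (Cortesani--Toader and the like) do not preserve the orientation constraint $[u]\nu_u\in[0,\infty)e_2$; one cannot simply mollify or use piecewise-affine approximation of the jump set, since a generic small perturbation of a rectifiable jump set destroys horizontality. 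I expect the argument to go via slicing in the $x_1$-direction: for a.e.\ $x_1$ the vertical slice $u^{x_1}$ is a one-dimensional $SBV^p$ function with a finite, positively-oriented jump set, and the one-dimensional density problem is manageable; the difficulty is to patch these slice-wise approximations together measurably and control the total jump length, exploiting that the constraint forces $J_u$ to be "almost horizontal" in a measure-theoretic sense. This is exactly the approximation result advertised in the abstract as a key ingredient.

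Second, given the density result, it suffices to construct a recovery sequence for a function $u\in\overline{SBV}_y^p$ whose jump set is a finite union of horizontal segments $\Sigma_1,\dots,\Sigma_N$ with $[u]>0$ on each, and which is smooth away from these segments. Here I would build $u_k\in\calA$ by a local modification near each $\Sigma_i$: on a tube of width $\sim\theta_k$ around $\Sigma_i$ one interpolates across the jump using the second martensite well, i.e.\ one lets $\partial_2 u_k$ take the value $1/\theta_k$ on a set of vertical measure $\sim\theta_k[u]$ so as to accumulate the prescribed jump $[u]$, while $u_k=u$ outside these tubes (with a negligible gluing layer). Away from the tubes $\partial_2 u_k=\partial_2 u$ lies in the first well and contributes $\int|\partial_2 u|^p$; inside the tubes the elastic terms are $O(\theta_k)\to0$; the second-order term picks up, for each unit length of $\Sigma_i$, two jumps of $Du_k$ of size $1/\theta_k$ crossing the tube, giving $\sigma\theta_k\cdot 2\cdot(1/\theta_k)\cdot\calH^1(\Sigma_i)=2\sigma\calH^1(\Sigma_i)$ in the limit. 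Summing over $i$ and taking into account the boundary condition $u_k(0,\cdot)=x_2$ (which $u$ already satisfies, so no extra work near $\{x_1=0\}$) yields $\limsup_k E_{\theta_k}(u_k)\le 2\sigma\calH^1(J_u)+\int(|\partial_1u|^p+|\partial_2u|^p)=E(u)$. Combined with the lower bound from Proposition~\ref{prop:comp}, the limit is in fact an equality along this sequence.

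Finally, I would assemble the two pieces by a diagonal argument: pick $u_j$ from the density result with $E(u_j)\to E(u)$ and $u_j\to u$ in $L^1$; for each $j$ let $(u_{j,k})_k$ be the recovery sequence for $u_j$ with $\limsup_k E_{\theta_k}(u_{j,k})= E(u_j)$ and $u_{j,k}\to u_j$ in $L^1$; then a standard diagonal extraction $k\mapsto u_{j(k),k}$ produces $u_k\in\calA$ with $u_k\to u$ in $L^1$ and $\limsup_k E_{\theta_k}(u_k)\le E(u)$, which is the claim (metrizability of $L^1$ makes the diagonalization routine). The only genuinely delicate point throughout is the measurable selection and jump-length bookkeeping in the density step; the recovery-sequence construction itself is a routine branching-type interpolation and the diagonalization is standard.
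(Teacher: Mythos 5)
Your overall plan (density of finitely-many-horizontal-segments functions, explicit recovery for those, then a diagonal argument) and your recovery construction both match the paper: the thin-tube modification of thickness $\sim\theta[u]$ in which $\partial_2 u_\theta$ sits in the second well, contributing $2\sigma\calH^1(\Sigma_i)$ from the two gradient discontinuities across the tube, is essentially Proposition~\ref{prop:constr}, and the diagonalization is the same.

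The genuine gap is in the density step, which you correctly identify as the crux but for which your sketch offers a route that I do not think works. You propose approximating each vertical slice $u^{x_1}$ one-dimensionally and then ``patching these slice-wise approximations together measurably.'' But any assignment $x_1\mapsto$ (approximation of $u^{x_1}$) in which the one-dimensional jump points move with $x_1$ produces a two-dimensional jump set with nonzero horizontal normal component, destroying the constraint $[u]\nu_u\in[0,\infty)e_2$; and even ignoring that, the two-dimensional $\calH^1$-length of the patched jump set is not controlled by $\int\calH^0(J_{u^{x_1}})\dx$, so there is no mechanism to recover $\calH^1(J_u)$. The paper avoids slicing here entirely: Theorem~\ref{theodensityfull} covers $(0,1)^2$ by a Whitney-type family of squares with bounded overlap-chains (Lemma~\ref{lemmacover}) and performs local surgery of three kinds, each designed to preserve $[v]\ge0$ and $\nu_v=e_2$: mollification and gluing controlled by a Poincar\'e inequality exploiting horizontality of jumps where $\calH^1(J_u\cap q)$ is small relative to $\ell_q$ (Lemma~\ref{lemmatyp1mollifsquare}, Proposition~\ref{proptyp1}); flattening the jump to a horizontal segment where it is close to a $C^1$ graph (Proposition~\ref{proptyp3}); and replacement by a single vertical slice $u(s,\cdot)$ in the remaining ``bad'' squares (Proposition~\ref{proptyp2}), with the bookkeeping showing those carry negligible energy. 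Two secondary points you gloss over: the density modification does not automatically preserve $u(0,x_2)=x_2$ (the paper first shrinks/extends to $u_\eta\equiv x_2$ near $\{x_1=0\}$ and uses the ``$v_j=u$ on $U$'' clause of Theorem~\ref{theodensityfull}); and upgrading the approximants to smooth-away-from-jumps, as Proposition~\ref{prop:constr} requires, is itself delicate since mollification spreads jumps — the paper does this separately in Lemma~\ref{lemmadensitysmooth} by mollifying horizontally (which only lengthens horizontal segments), then vertically in each strip between jump lines after subtracting $x_2$, then rescaling.
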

{In the proof we mainly work on the space
 $SBV_y^p(\Omega)$
defined in  \eqref{eq:sbvpy}  and use on this space  the functional
\begin{equation*}
E(u,\Omega):=
  \int_{\Omega} (|\partial_1 u|^p +|\partial_2u|^p)\dxy + 2\sigma \calH^1(J_u\cap\Omega)
\end{equation*}
which reduces to $E(u)$ if $\Omega=(0,1)^2$ and $u(0,x_2)=x_2$.}
\begin{proof}
Let $u\in \overline{SBV}_y^p$, extended to $(-1,0)\times(0,1)$ by $u(x_1,x_2)=x_2$. Assume that $E(u)<\infty$, otherwise there is nothing to prove. For $\eta\in(0,1/10)$ we define
\begin{equation*}
u_\eta(x):=u(x_1-2\eta,2\eta+(1-4\eta)x_2)+2\eta (2x_2-1),  
\end{equation*}
so that $u_\eta(x)=x_2$ for $x_1\le 2\eta$, 
and let $\Omega:=(-\eta,1+\eta)\times(-\eta,1+\eta)$.
With Lemma \ref{lemmasbvpy} one obtains
$u\in SBV(({-1},1)\times{{(-\eta,1+\eta)}})$ and therefore
$u_\eta\in SBV_y^p(\Omega)$. Further, $u_\eta\to u$ in 
$L^1((0,1)^2)$ and $E(u_\eta,\Omega)\to E(u)$ as $\eta\to 0$. 
By Theorem \ref{theodensityfull} below 
(on the set $\Omega$, with $R:=\eta$ and $U:=(-\eta,\eta)\times(0,1)$) there exists a sequence ${v}_j\in SBV_y^p(\Omega)$  
with ${v}_j\to u_\eta$ in $L^1(\Omega)$, $E(v_j,\Omega)\to E(u_\eta,\Omega)$, such that 
$J_{{v}_j}$ is locally a finite union of segments, and $v_j=u_\eta$ in $U$. In particular,
${v}_j(0,x_2)=x_2$. 
Since $(0,1)^2\subset\subset\Omega$, 
$J_{v_j}\cap (0,1)^2$ is a finite union of segments.

By Lemma  \ref{lemmadensitysmooth} below, there exists a sequence $w_\ell\in SBV_y^p$ such that $w_\ell\to v_j$ in $L^1((0,1)^2)$ as $\ell\to\infty$, $w_\ell$ is smooth away from its jump set with 
smooth traces on both sides of the jumps, and the jump set consists of finitely many segments. Further, $w_\ell(0,x_2)=v_j(0,x_2)$ and $E(w_\ell)\to E(v_j)$ as $\ell\to\infty$. For every $w_\ell$, by 
Proposition \ref{prop:constr} below, there exists a recovery sequence $w_\theta^\ell\to w_\ell$ with $\limsup_{\theta\to 0}E_\theta(w_\theta^\ell)\le E(w_\ell)$. Finally, taking a diagonal sequence, 
we obtain a recovery sequence for $u$. 
\end{proof}
\subsection{Functions whose jump sets consist of finitely many segments}\label{sec:recsq}
We first provide an explicit construction of a recovery sequence for a generic function $u\in SBV_y^p((0,1)^2)$ whose jump set consists of finitely many segments.
\begin{proposition}\label{prop:constr}
 Let $u\in SBV^p_y((0,1)^2)$ with $u(0,x_2)=x_2$ be such that $J_u$ is a finite union of segments, and 
 $u$ smooth away from its jump set with 
smooth traces on both sides of the jumps. Then, for $\theta\in (0,1/2]$ there is $u_\theta\in\calA$ such that
 $u_\theta\to u$ in $L^1((0,1)^2)$ as $\theta\to 0$, $\limsup_{\theta\to 0} E_\theta(u_\theta) \le E(u)$
and $u_\theta(0,x_2)=x_2$ for all $\theta$.
 \end{proposition}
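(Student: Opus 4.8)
\textbf{Proof proposal for Proposition~\ref{prop:constr}.}
The plan is to build $u_\theta$ by modifying $u$ only in thin strips around $J_u$, where the deformation concentrates. Away from the jump set, $u$ is smooth with $\partial_2 u\in\R$ bounded, so on compactly contained pieces of $(0,1)^2\setminus J_u$ we simply keep $u_\theta:=u$; there $\min\{|\partial_2 u_\theta|^p,|\partial_2 u_\theta-1/\theta|^p\}=|\partial_2 u|^p$ once $\theta$ is small (since $\partial_2 u$ is close to $0$, not to $1/\theta$), and the second-order term vanishes because $u$ is smooth there. Hence the entire energy budget must be spent near $J_u$, where $u$ has a positive jump $[u]\,e_2$ and on the reduced functional this contributes $2\sigma\calH^1(J_u)$.

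Next I would treat a single jump segment, say horizontal at height $x_2=a$ with jump height $g(x_1):=[u](x_1,a)>0$, the general segment orientation being handled by the same construction in a rotated/sheared frame (the orientation constraint $[u]\nu_u\in[0,\infty)e_2$ forces the jump to be ``vertical'' in the values, which is exactly what the $1/\theta$ branch is designed to absorb). In a strip $\{|x_2-a|<h_\theta\}$ I replace the jump by a steep affine-in-$x_2$ transition layer: roughly, interpolate between the lower trace $u^-$ and the upper trace $u^- + g$ linearly in $x_2$ across the strip, with the slope $\partial_2$ forced to take the value $1/\theta$ on a sub-strip of width $\theta g(x_1)$ and a small value on the rest, so that the total increment is $g(x_1)$ and the inner term $\min\{|\partial_2 u_\theta|^p,|\partial_2 u_\theta-1/\theta|^p\}$ is essentially zero in the transition layer. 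The strip width should satisfy $\theta g(x_1)\le h_\theta\to 0$; one may take $h_\theta:=\theta^{1/2}$ (anything with $\theta\ll h_\theta\to0$ works). Since $D^2 u_\theta$ then has, across this layer, two jumps of $\partial_2 u_\theta$ of size $\approx 1/\theta$, concentrated on two lines of length $\approx\calH^1$(segment) each, the term $\sigma\theta|D^2 u_\theta|$ contributes $\approx\sigma\theta\cdot 2\cdot(1/\theta)\cdot\calH^1(J_u)=2\sigma\calH^1(J_u)$, matching the target; the $x_1$-derivative of the interpolation only involves the smooth traces and $g$, so $\int|\partial_1 u_\theta|^p$ converges to $\int_{(0,1)^2}|\partial_1 u|^p$. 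I would also need to reconnect the transition layers to the ambient smooth function across the segment endpoints by a further local interpolation on a set of vanishing measure, whose extra second-order cost is $O(\theta\cdot 1/\theta\cdot(\text{small length}))\to0$; here one uses that $J_u$ is a \emph{finite} union of segments so there are finitely many endpoints and crossings to fix up, and that $u$ has smooth traces so the matching is controlled. Care near $\{x_1=0\}$: since $u(0,x_2)=x_2$ and $u_\eta$ (in the application) already equals $x_2$ near that edge, the construction can be taken to not touch a neighborhood of $\{x_1=0\}$, preserving $u_\theta(0,x_2)=x_2$; alternatively, cut off the modification to vanish there.

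Finally I would assemble: define $u_\theta$ globally as $u$ outside the union of the strips and the endpoint-correction sets, and as the above interpolations inside, check $u_\theta\in W^{1,p}$ with $\partial_1 u_\theta,\partial_2 u_\theta\in BV$ (the only singular part of $D^2 u_\theta$ is the jump of $\partial_2 u_\theta$ along finitely many segments, which is fine), check $u_\theta(0,\cdot)=\mathrm{id}$, hence $u_\theta\in\calA$; then $u_\theta\to u$ in $L^1$ because the strips have measure $O(h_\theta)\to0$ and on them $u_\theta$ stays within $O(\max g)$ of $u$; and add up the three energy contributions to get $\limsup_{\theta\to0}E_\theta(u_\theta)\le \int_{(0,1)^2}(|\partial_1 u|^p+|\partial_2 u|^p)\dxy+2\sigma\calH^1(J_u)=E(u)$. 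The main obstacle is the bookkeeping at segment endpoints and mutual intersections of the jump segments: there the one-dimensional transition-layer picture breaks down and one must design the interpolation so that the transition layers of different segments are glued together without creating extra $\calH^1$-length of $\partial_2 u_\theta$-jumps beyond $o(1/\theta)$; everything else is a routine estimate once the strip geometry is fixed.
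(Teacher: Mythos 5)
Your proposal takes essentially the same route as the paper: replace the jump by a thin layer in which $\partial_2 u_\theta$ is forced close to $1/\theta$, so the two lines bounding the layer carry $D^2 u_\theta$-mass of size $\sim 1/\theta$ along a total length that converges to $2\calH^1(J_u)$, while the elastic contribution of the layer vanishes with its measure. Two of the worries you flag are, however, nonissues once the construction is set up minimally. First, the orientation constraint $[u]\nu_u\in[0,\infty)e_2$ forces $\nu_u=e_2$ almost everywhere on $J_u$, so \emph{all} segments of $J_u$ are horizontal; there is no rotated or sheared case to handle. Second, the ``bookkeeping at segment endpoints'' that you call the main obstacle disappears if, instead of a fixed-width $\sqrt\theta$ strip containing a sub-strip, you simply take the strip above the $k$-th segment to have width exactly $\theta h_k(x_1)$, $h_k(x_1):=[u](x_1,y_k)$, and set $u_\theta=u+(\tfrac{x_2-y_k}{\theta}-h_k(x_1))$ there (this is the paper's choice). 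Smoothness of $u$ and of its one-sided traces forces $h_k$ to vanish continuously at the segment endpoints $a_k,b_k$, so the strip closes up by itself, and for $\theta$ small the strips around different segments are disjoint since they are at positive mutual distance; no separate endpoint or gluing estimate is required. With this simplification your remaining estimates (the $L^1$-convergence via $|A_\theta|\to0$, the elastic term, and the surface term counting two jumps of size $\sim1/\theta$ along the boundary of the strip) coincide with those in the paper.
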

\begin{proof}
Suppose that 
  $J_u={{\bigcup}}_{k=0}^K [a_k,b_k]\times \{y_k\}$ {(up to a null set)}. Let $\rho>0$ be such that
  the distance between any pair of  segments is larger than $2\rho$.
 We shall modify $u$ inside each set $(a_k,b_k)\times (y_k-\rho, y_k+\rho)$, and not touch it outside.
 Set  $h_k(x_1):=[u](x_1,y_k)\geq 0$. 
We can assume without loss of generality that  $\theta$ is so that $\theta\|h_k\|_{L^\infty}\leq\rho$ for all $k=0,\dots, K$
 (otherwise $u_\theta(x):=x_2$ will do).
 We set
 \begin{equation*}
  u_\theta(x):=
  \begin{cases}
  u(x) + \left(\frac{x_2- y_k}{\theta} - h_k(x_1)\right) & \text{ if } y_k<x_2<y_k + \theta h_k(x_1)\,, \\
   u(x) & \text{ otherwise.}
  \end{cases}
 \end{equation*}
If $u(0,\cdot)$ is continuous, then $u_\theta(0,\cdot)=u(0,\cdot)$. 
 We set 
\[A_\theta:=\bigcup_{k=0}^KA_\theta^k,\text{\qquad }A_\theta^k:=\{(x_1,x_2):\,x_1\in (a_k,b_k),\,y_k<x_2<\theta h_k(x_1)\} \]
and observe that $|A_\theta|\le \theta \sum_k \|h_k\|_{L^\infty}\to0$ as $\theta\to0$.
Since $u=u_\theta$ outside $A_\theta$, we obtain
\begin{equation*}
 \|u-u_\theta\|_{L^1((0,1)^2)}\le |A_\theta| \max_k \|h_k\|_{L^\infty} \to0\,.
\end{equation*}
It remains to prove convergence of the energies.
Outside $A_\theta$ we have $\nabla u=\nabla u_\theta$. Inside $A_\theta$
we have $|\partial_1 u_\theta|\le \|\partial_1 u\|_{L^\infty} + \max_k \|h_k'\|_{L^\infty}$
and $|\partial_2 u_\theta-\frac1\theta|\le \|\partial_2 u\|_{L^\infty}$, therefore
\begin{equation}
 \int_{(0,1)^2} |\partial_1 u_\theta|^p+\min\{|\partial_2u_\theta|^p,\,|\partial_2u_\theta-\frac{1}{\theta}|^p\}\dxy
 \le \int_{(0,1)^2} |\partial_1 u|^p+|\partial_2u|^p\dxy + c |A_\theta|\,.
 \label{eq:simpleelast}
\end{equation}
We next consider the surface energy term. By construction, $Du_\theta\in SBV((0,1)^2)$ with  $J_{Du_\theta}\subset\partial A_\theta$,
so that
$\calH^1(J_{Du_\theta})\le \sum_k \int_{a_k}^{b_k} {{(}}1 + \sqrt{1+\theta^2(h_k')^2(t)}{{) \dt}}
\to 2 \sum_k (b_k-a_k) = 2\calH^1(J_u)$. At the same time, $|[Du_\theta]|\le \theta^{-1} + \max_k \|h_k'\|_{L^\infty}$.
Further, $\nabla^2 u_\theta=\nabla^2 u+\nabla^2h_k(x_1)\chi_{A_\theta^k}$. Since $\| h_k''\|_{L^\infty}\leq c$ and $|A_\theta^k|\leq c\theta$, we estimate
\begin{eqnarray}\label{eq:simplesurf}
\sigma\theta|D^2u_\theta|((0,1)^2)&\leq&\sigma\theta\left(\|\nabla^2 u\|_{L^1((0,1)^2)}+c|A_\theta|+(\frac{1}{\theta}+c)\calH^1(J_{Du_\theta})\right)\nonumber\\
&\leq&2\sigma(1+c\theta)\calH^1(J_u)+c\sigma\theta.
\end{eqnarray}
Putting together \eqref{eq:simpleelast} and \eqref{eq:simplesurf}, we obtain $\limsup_{\theta\to 0}E_\theta(u_\theta)\leq E(u)$. This concludes the proof.
\end{proof}

\begin{lemma}\label{lemmadensitysmooth}
 Let  $u\in SBV^p_y((0,1)^2)$ be such that $J_u$ is a finite union of segments and such that $u(0,x_2)=x_2$. Then there is  a sequence 
 $v_j\in  SBV^p_y((0,1)^2)$ with the following properties: We have $v_j\to u$ in $L^1$, each $v_j$ is smooth (up to the boundary) away from the jump set, which consists of finitely many segments, the traces
 on both sides of the jump are smooth and $v_j(0,x_2)=x_2$. Further, 
 $\limsup_{j\to\infty} E(v_j) \le E(u)$.
\end{lemma}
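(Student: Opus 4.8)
\emph{Plan of proof.}
Since $J_u$ is a finite union of segments on which $[u]>0$ while $[u]\nu_u\in[0,\infty)e_2$, each of these segments is horizontal; merging overlapping ones we may write $J_u=\bigcup_{k=1}^K[a_k,b_k]\times\{y_k\}$ with distinct segments at positive mutual distance, and we record $h_k(x_1):=[u](x_1,y_k)\ge0$, which is positive for a.e.\ $x_1\in(a_k,b_k)$. Note $u\in SBV^p\subset BV((0,1)^2)\subset L^1$. After a preliminary translation (as in the definition of $u_\eta$ in the proof of Theorem~\ref{th:ub}) and a diagonalization, we may assume $u(x_1,x_2)=x_2$ on a neighbourhood of $\{x_1=0\}$ in $[0,1]^2$, so that every mollification below leaves $u$ unchanged there and the Dirichlet datum is preserved automatically. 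Fix $\rho>0$ smaller than one half of all pairwise distances of the segments and of their distances to $\{x_1=0\}$, $\{x_2=0\}$ and $\{x_2=1\}$.

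\emph{Step 1 (regularising the jump heights).}
Extend $u$ by $x_2$ to $\{x_1<0\}$ and by reflection in $x_1$ to $\{x_1>1\}$, and let $u^\delta$ be the convolution of $u$ with a standard mollifier $\phi_\delta$ at scale $\delta<\rho$ \emph{in the $x_1$ variable only}. Because the jumps of $u$ are horizontal, $Du\cdot e_1$ has no singular part, hence $u^\delta\in SBV^p_y((0,1)^2)$; its approximate gradient is the $x_1$-mollification of $\nabla u$, its jump set is $\bigcup_k[a_k-\delta,b_k+\delta]\times\{y_k\}$, and its jump across the $k$-th segment is $d_k^\delta:=(h_k\chi_{(a_k,b_k)})*\phi_\delta$, which is $C^\infty$, nonnegative, strictly positive precisely on $(a_k-\delta,b_k+\delta)$, and flat (vanishing to infinite order) at the two endpoints. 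A direct check gives $u^\delta\to u$ in $L^1$ and $E(u^\delta)\to E(u)$ as $\delta\to0$, the surface term converging because $\calH^1(J_{u^\delta})=\sum_k(b_k-a_k+2\delta)$.

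\emph{Step 2 (mollifying the Sobolev remainder).}
Fix $\delta$ and write $\tilde h_k:=d_k^\delta$, $\tilde S_k:=[a_k-\delta,b_k+\delta]\times\{y_k\}$. Fix once and for all $\zeta\in C^\infty(\R\setminus\{0\})$ with $\zeta\equiv0$ on $(-\infty,0]$, $\zeta\equiv1$ on $[0,\rho/2]$ and $\supp\zeta\subset[0,\rho]$, so that $\zeta$ has a unit upward jump at $0$. Then $\tilde z:=\sum_k\tilde h_k(x_1)\zeta(x_2-y_k)$ lies in $SBV^p_y((0,1)^2)$, has bounded gradient, jump set $\bigcup_k\tilde S_k$ with jump $\tilde h_k\ge0$, and—because the $\tilde h_k$ are smooth and flat at their endpoints and the strips $[y_k,y_k+\rho]$ meet no other segment—is smooth up to the boundary on $\overline{(0,1)^2}\setminus\bigcup_k\tilde S_k$ with smooth traces on both sides of each $\tilde S_k$. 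By construction the jump of $\tilde z$ exactly cancels that of $u^\delta$, so $\tilde w:=u^\delta-\tilde z\in W^{1,p}((0,1)^2)$ with $\tilde w(0,\cdot)=x_2$; approximating $\tilde w$ by $\tilde w^\eps\in C^\infty(\overline{(0,1)^2})$ in $W^{1,p}$ with $\tilde w^\eps=x_2$ near $\{x_1=0\}$ (standard, since $\tilde w\equiv x_2$ there), set $v^{\delta,\eps}:=\tilde w^\eps+\tilde z$. Then $v^{\delta,\eps}\in SBV^p_y((0,1)^2)$ is smooth up to the boundary off the finite union of segments $J_{v^{\delta,\eps}}=\bigcup_k\tilde S_k$, has smooth traces, satisfies $v^{\delta,\eps}(0,\cdot)=x_2$, and, as $\eps\to0$, converges to $u^\delta$ in $L^1$ with $\nabla v^{\delta,\eps}\to\nabla u^\delta$ in $L^p$ while $\calH^1(J_{v^{\delta,\eps}})=\calH^1(J_{u^\delta})$; hence $E(v^{\delta,\eps})\to E(u^\delta)$. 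Combining with Step 1, a diagonal argument over $\delta\to0$ and then $\eps\to0$ produces $v_j$ with $v_j\to u$ in $L^1$ and $E(v_j)\to E(u)$, in particular $\limsup_jE(v_j)\le E(u)$.

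\emph{Main difficulty.}
The crux is that every operation must keep the function in $SBV^p_y$—both the horizontal orientation \emph{and} the nonnegative sign of the jump—and must yield the correct structure at the endpoints of the segments (smooth, with the jump height tapering to zero). This is exactly why one mollifies first only in $x_1$, which keeps jumps horizontal, keeps jump heights nonnegative, and makes them smooth and flat at the tips, and only afterwards peels off the explicit jump-carrying function $\tilde z$ so that the remainder is genuinely Sobolev and may be mollified freely; note that although $\|\nabla\tilde z\|_{L^p}$ may blow up as $\delta\to0$, this is harmless, since for each fixed $\delta$ one sends $\eps\to0$ first and the limiting energy only involves $\nabla u^\delta$, which stays bounded.
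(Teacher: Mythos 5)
Your proof is correct, and for the core difficulty (the vertical regularization while keeping the jump structure) it takes a genuinely different route from the paper. Both proofs start with the same horizontal mollification of the jump heights, which makes the $h_k$ smooth, keeps them nonnegative, and—crucially—makes them vanish to infinite order at the endpoints of the (slightly enlarged) segments. After that the two arguments diverge. The paper subtracts $x_2$, freezes $w_\eps-x_2$ at each strip boundary $\{x_2=a\}$ and $\{x_2=b\}$, mollifies vertically inside each strip at a smaller scale $\tilde\eps$, and then rescales $(-\eps,1-\eps)\times(a-\tilde\eps,b+\tilde\eps)$ back onto $(0,1)\times(a,b)$; smoothness follows because the whole operation is a two-dimensional mollification of an extension composed with an affine map, and the Dirichlet datum is restored by the horizontal rescaling. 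You instead peel off an explicit jump-carrying function $\tilde z=\sum_k\tilde h_k(x_1)\zeta(x_2-y_k)$ (piecewise smooth, jump exactly $\tilde h_k$) so that $\tilde w=u^\delta-\tilde z$ is a genuine $W^{1,p}$ function; you then regularize $\tilde w$ by standard Sobolev density, which is unconstrained, and add $\tilde z$ back. This decomposition makes the verification of ``smooth off the jump set with smooth traces'' essentially immediate and avoids the per-strip freezing and rescaling bookkeeping; the price is that it relies on the flatness of $\tilde h_k$ at the endpoints (provided by Step~1) and on the mutual distance $2\rho$ between segments so that the supports of $\zeta(\cdot-y_k)$ are disjoint. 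You also handle the boundary datum via the preliminary translation $u\mapsto u_\eta$ (as in the proof of Theorem~\ref{th:ub}) rather than via the horizontal rescaling that the paper uses at the end of its construction; this adds one more parameter to the diagonalization but is harmless and keeps the ensuing mollifications from touching $\{x_1=0\}$. Both arguments yield the same conclusion.
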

\begin{proof}
 For $\eps>0$ let $\varphi_\eps\in C^\infty_c((-\eps,\eps))$ be {{a}} one-dimensional mollifier. 
 Let $J_u={{\bigcup}}_{k=0}^K [a_k,b_k]\times \{y_k\}$ (up to null sets). 
 
 We first mollify in the horizontal direction. Extend $u$ by $u(x_1,x_2)=x_2$ for $x_1\leq 0$.
 We set, for $x\in (-\eps, 1-\eps)\times(0,1)$,
 \begin{equation*}
  w_\eps(x_1,x_2){{:=}}\int_\R \varphi_\eps(x_1-t) u(t,x_2) \dt \,.
 \end{equation*}
  We remark that $w_\eps\in SBV$ and its jump set is contained in the segments which arise from those where $u$ jumps
  by making them $\eps$-longer on each side. 
  Precisely, $J_{w_\eps}\subset {{\bigcup}}_{k=0}^K [a_k-\eps,b_k+\eps]\times \{y_k\}$ (up to null sets), hence its length
  has grown by at most $2{(K+1)}\eps$.
  By convexity $\|\partial_i w_\eps\|_{L^p{{(}}(-\eps,1-\eps)\times(0,1){{)}}}\le \|\partial_i u\|_{L^p((0,1)^2)}$ for $i=1,2$.
  Further, if $[u]\ge0$ then $[w_\eps]\ge 0$, since the new jump arises from the old one by averaging. 
  
  At this point we mollify in the vertical direction, separately in each region without discontinuities. 
  In order to keep the boundary data it is convenient to subtract
  the affine function $x\mapsto x_2$.
  Let $a,b\in (0,1)$ be two consecutive elements of $\{0,1\}\cup \{y_k\}_{k=0,\dots, K}$, i.e., two distinct values such that 
  $(0,1)\times(a,b)\cap J_u$ is an $\calH^1$-null set. 
 We focus on the construction for $x_2\in (a,b)$, the other regions are analogous. We can assume $b> a+2\eps$.
 We set
 \begin{equation*}
  \hat z_\eps(x_1,x_2){{:=}}\begin{cases}         w_\eps(x_1, a)-a&\text{ if } x_2\le a\\
w_\eps(x_1,x_2)-x_2 &\text{ if } a<x_2<b\\
       w_\eps(x_1, b) -b&\text{ if } x_2\ge b
       \end{cases}
 \end{equation*}
and $z_{\tilde \eps}$ as the vertical mollification of $\hat z_\eps$ on a scale $\tilde\eps\leq \eps$, i.e. $z_{\tilde \eps}(x_1,x_2){{:=}}\int_\R \varphi_{\tilde \eps}(x_2-t) \hat z_\eps(x_1,t) \dt$. 
We remark that $z_{\tilde\eps}(x_1,x_2)=w_\eps(x_1,b)-b$ for all $x_2\ge b+\tilde\eps$.

Finally, we scale back so that  $(-\eps,1-\eps)\times (a-\tilde\eps, b+\tilde\eps)$ is mapped to $(0,1)\times (a,b)$.
  Precisely, we define
  \begin{equation}
   \hat v_{\tilde\eps}(x_1,x_2) {{:=}} z_{\tilde \eps}\left( x_1-\eps, a-\tilde\eps + (x_2-a) \frac{b+2\tilde\eps-a}{b-a} \right) {+x_2}.
  \end{equation}
This gives a smooth map on the set $[0,1]\times [a,b]$, because the construction is the same
as mollification (of an extension) with $\psi_{\eps,\tilde \eps}(x_1,x_2)=\varphi_\eps(x_1)\varphi_{\tilde\eps}(x_2)\in C^\infty_c((-\eps,\eps)\times (-\tilde \eps,\tilde \eps))$, {{combined with}} an affine rescaling.
The only term in the derivative that is not automatically estimated by convexity is the $x_1$-derivative close to the jumps. However{{,}} $\| \partial_{1} \hat 
v_{\tilde\eps}\|^p_{L^p((0,1)\times(b-\tilde\eps,b))}\leq \| \partial_{1} w_\eps \|^p_{L^p((0,1)\times(b-\tilde\eps,b))} +\tilde\eps\|\partial_{1} w_\eps(b,\cdot)\|^p_{L^p((0,1))}$ where the second 
term gets small for small $\tilde \eps$.
  The same is done in all other intervals.
On the lines of the jumps, $\hat v_{\tilde\eps}$ coincides with $w_\eps$, which has a jump of the controlled length. Further, the jump has the appropriate sign and is smooth.
Close to $a$ and $b$ the function $v_\eps(x){-x_2}$ does not depend on $x_2$, therefore the  constructions on the two sides match smoothly away from the jump.
\end{proof}

\subsection{Density}\label{sec:density}
In this section we prove density of functions with regular jump sets.
{The argument is more naturally discussed in general domains, but with 
a jump set of finite total length, therefore we} use the 
space $SBV^p_y$ as introduced in \eqref{eq:sbvpy}. The density proof is done by three different constructions in 
small squares, that we present first, followed by a somewhat involved covering argument, discussed later. The constructions
in turn build upon suitable variants of the Poincar\'e inequality.

We write $Q_\rho:=(-\rho,\rho)^2$, $Q_\rho(x):=x+Q_\rho$. 
For two functions $u,v:\Omega\to\R^m$ we say that $u=v$ around $\partial\Omega$ if there is $\omega\subset\subset\Omega$ such that
$u=v$ in $\Omega\setminus\omega$.
For the entire construction we fix 
a mollifier $\varphi_\rho\in C^\infty_c(Q_{\rho/2};[0,\infty))$ with $|D\varphi_\rho|\le {{c/\rho^3}}$.

\subsubsection{Constructions: Type I (bulk squares)}
We first treat squares which contain a small amount of jump set. Since the jump set is purely horizontal, one can use the normal Poincar\'e inequality
to control the dependence of $u$ on $x_1$ (Lemma \ref{lemmapoincarehoriz}). At the same time, if the length of the jump set is not enough to divide the square into two halves,
there are some vertical sections on which one can also use Poincar\'e. This gives a control of the $L^p$ oscillation of $u$ in terms of the $L^p$ norm
of its regular gradient $\nabla u$ (Lemma \ref{lemmatyp1mollifsquare}), and therefore permits to estimate the gradient of a mollification
(Lemma \ref{lemmatyp1enmoll}). An interpolation around the boundary leads to a construction which makes $u$ smooth in the interior of the square
(Proposition \ref{proptyp1}), which is the main result of this subsection.

\begin{lemma}[Horizontal Poincar\'e]\label{lemmapoincarehoriz}
 If $u,v\in SBV^p_y(Q_r(x_*))$ and $u=v$ on $\partial Q_r(x_*)$, then
 \begin{equation*}
  \|u-v\|_{L^p(Q_r(x_*))} \le c r \|\nabla u-\nabla v \|_{L^p(Q_r(x_*))}\,.
 \end{equation*}
The same holds if  $u=v$ on $\partial Q_r(x_*)$ is replaced by the fact that 
there is $t\in [-r,r]$ such that $u=v$ on $(x_*)+\{t\}\times(-r,r)$ in the sense of traces.
 \end{lemma}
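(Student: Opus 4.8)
The plan is to reduce everything to one-dimensional Poincaré inequalities on horizontal slices, exploiting the key structural fact that functions in $SBV^p_y$ have no jumps in the horizontal direction: for a.e. $x_2$, the slice $t\mapsto u(t,x_2)$ lies in $W^{1,p}$ with derivative $\partial_1 u(t,x_2)$, as recorded in Section \ref{sec:notation}. Set $w:=u-v$. Then for a.e. $x_2$ the horizontal slice $w^{x_2}$ is in $W^{1,p}((-r,r))$ (after translating so that $x_*=0$, which we may do without loss of generality), with $(w^{x_2})'(t)=\partial_1 u(t,x_2)-\partial_1 v(t,x_2)$, and $\nabla w = \nabla u - \nabla v$ a.e. The jump sets of $u$ and $v$ contribute nothing to the horizontal slicing precisely because $[u]\nu_u, [v]\nu_v\in[0,\infty)e_2$, so $\nu_u=\nu_v=\pm e_2$ on the respective jump sets and these sets meet a.e. horizontal line in a null set.

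First I would treat the case $u=v$ on $\partial Q_r$. For a.e. $x_2\in(-r,r)$ the endpoints $(-r,x_2)$ and $(r,x_2)$ lie on $\partial Q_r$, so $w^{x_2}(-r)=w^{x_2}(r)=0$ in the sense of traces; in particular $w^{x_2}$ vanishes at (at least) one endpoint. The one-dimensional Poincaré inequality on the interval $(-r,r)$ for a $W^{1,p}$ function vanishing at an endpoint gives $\|w^{x_2}\|_{L^p((-r,r))}\le 2r\,\|(w^{x_2})'\|_{L^p((-r,r))}$. Raising to the $p$-th power, integrating over $x_2\in(-r,r)$, and applying Fubini yields
\begin{equation*}
\|w\|_{L^p(Q_r)}^p \le (2r)^p \int_{-r}^{r}\|(w^{x_2})'\|_{L^p((-r,r))}^p\,\mathrm{d}x_2 = (2r)^p\,\|\partial_1 w\|_{L^p(Q_r)}^p \le (2r)^p\,\|\nabla u-\nabla v\|_{L^p(Q_r)}^p,
\end{equation*}
which is the claim with $c=2$. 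For the variant with the hypothesis replaced by $u=v$ on $\{t\}\times(-r,r)$ in the sense of traces, the same argument applies: now for a.e. $x_2$ we have $w^{x_2}(t)=0$, so $w^{x_2}$ vanishes at the interior point $t\in[-r,r]$; writing $w^{x_2}(s)=\int_t^s (w^{x_2})'$ and using $|s-t|\le 2r$ together with Hölder gives the same pointwise-in-$x_2$ bound $\|w^{x_2}\|_{L^p((-r,r))}\le 2r\,\|(w^{x_2})'\|_{L^p((-r,r))}$, and one concludes by the same Fubini argument.

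The only genuine subtlety is making precise that the trace hypotheses indeed force $w^{x_2}$ to vanish (at an endpoint, resp. at $t$) for a.e. $x_2$, rather than just in some averaged boundary sense; this is the standard compatibility between the $BV$ trace on $\partial Q_r$ (resp. on an interior line) and the slicing theory (see \cite[Section 3.11]{AmbrosioFuscoPallara}), and I would invoke it directly. Everything else is a routine application of the fundamental theorem of calculus on slices plus Fubini, so no step is a serious obstacle; the heart of the matter is simply that the orientation constraint on the jump set makes horizontal slices genuinely Sobolev, so the classical scalar Poincaré inequality is available slice by slice.
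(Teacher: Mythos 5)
Your argument is exactly the one the paper has in mind: the paper's entire proof reads ``It suffices to apply the one-dimensional Poincar\'e estimate in the $x_1$-direction,'' and you correctly fill in the details (horizontal slices of $SBV^p_y$ functions are $W^{1,p}$ since the jump normal is $\pm e_2$, apply the scalar Poincar\'e inequality slice by slice, then integrate via Fubini). The proposal is correct and matches the paper's approach.
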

\begin{proof}
 It suffices to apply the one-dimensional Poincar\'e estimate in the $x_1$-direction.
\end{proof}

\begin{lemma}[Poincar\'e with discontinuities]\label{lemmatyp1mollifsquare}
There is $c>0$ such that for any {{$\rho>0$ and any}}
  $u\in SBV^p_y(Q_\rho(x_*))$ with $\calH^1 (J_u\cap Q_\rho(x_*))\le \rho$ there is $\bar u\in\R$ such that
  \begin{equation}
  \|u-\bar u\|_{L^p(Q_\rho(x_*))} \le c \rho \|\nabla u\|_{L^p(Q_\rho(x_*))}\,.
 \end{equation}
 \end{lemma}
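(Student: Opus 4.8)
The plan is to combine the horizontal Poincar\'e inequality (Lemma \ref{lemmapoincarehoriz}) with a one-dimensional $SBV$ Poincar\'e inequality in the vertical direction, exploiting that the jump set is so short it cannot block all vertical slices. After translating, assume $x_*=0$, so $Q_\rho=(-\rho,\rho)^2$.

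First I would identify a good vertical slice. For $t\in(-\rho,\rho)$ let $\ell_t:=\{t\}\times(-\rho,\rho)$ be the vertical segment; by Fubini (slicing, as recalled in Section \ref{sec:notation}), for $\calL^1$-a.e.\ $t$ the slice $u^t:=u(t,\cdot)$ lies in $SBV^p((-\rho,\rho))$ with $(u^t)'=\partial_2 u(t,\cdot)$ and jump set $J_{u^t}=\{s:(t,s)\in J_u\}$. Since $J_u$ is purely horizontal, $\calH^1(J_u\cap Q_\rho)=\int_{-\rho}^\rho \calH^0(J_{u^t})\,\mathrm{d}t \le \rho$, so the set $G:=\{t:J_{u^t}=\emptyset\}$ has $\calL^1(G)\ge \rho$ (its complement inside $(-\rho,\rho)$ has measure at most $\rho$ by Chebyshev, since $\calH^0(J_{u^t})\ge 1$ off $G$). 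On $G$ the slice $u^t$ is in $W^{1,p}((-\rho,\rho))$. Pick the mean $\bar u:=\fint_{Q_\rho} u$; actually it is cleaner to take $\bar u$ to be the average over a single good slice, or over $G$. By the standard one-dimensional Poincar\'e inequality on each vertical good slice, $\|u^t-\fint_{-\rho}^\rho u^t\|_{L^p((-\rho,\rho))}\le c\rho\|\partial_2 u(t,\cdot)\|_{L^p((-\rho,\rho))}$ for $t\in G$.

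Next I would spread this vertical control across all of $Q_\rho$ using horizontal Poincar\'e. The key point: for a fixed good slice $t_0\in G$, I can compare $u$ with the function $\tilde u(x_1,x_2):=u(t_0,x_2)$, which depends only on $x_2$. They agree on the segment $\{t_0\}\times(-\rho,\rho)$ in the sense of traces, so Lemma \ref{lemmapoincarehoriz} gives $\|u-\tilde u\|_{L^p(Q_\rho)}\le c\rho\|\nabla u-\nabla\tilde u\|_{L^p(Q_\rho)}\le c\rho(\|\partial_1 u\|_{L^p(Q_\rho)}+0)+c\rho\|\partial_2 u-\partial_2\tilde u\|_{L^p(Q_\rho)}$; but $\partial_2\tilde u(x_1,x_2)=\partial_2 u(t_0,x_2)$ has $L^p(Q_\rho)$-norm $(2\rho)^{1/p}\|\partial_2 u(t_0,\cdot)\|_{L^p}$, which is \emph{not} controlled by $\|\nabla u\|_{L^p(Q_\rho)}$ for a single fixed $t_0$. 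To fix this I would instead average over $t_0\in G$: replace $\tilde u$ by $\bar u\in\R$, the constant $\bar u:=\frac{1}{\calL^1(G)}\int_G\fint_{-\rho}^\rho u(t,s)\,\mathrm{d}s\,\mathrm{d}t$. Writing $u-\bar u=(u-\tilde u_{t_0})+(\tilde u_{t_0}-\bar u)$ and integrating the square of the second term over $t_0\in G$ (which has measure $\ge\rho$), Minkowski/Jensen and the vertical Poincar\'e bound turn $\|\tilde u_{t_0}-\bar u\|_{L^p(Q_\rho)}$-type quantities into $c\rho\,\|\partial_2 u\|_{L^p(Q_\rho)}$ after also using the horizontal Poincar\'e on each slice to absorb the $x_1$-dependence; more directly, I would prove the estimate in the form $\|u-\bar u\|_{L^p(Q_\rho)}^p\le c\rho^{-1}\int_G\|u-\tilde u_{t_0}\|_{L^p(Q_\rho)}^p\,\mathrm{d}t_0 + c\|\tilde u - \bar u\|^p$, handle the first integral by Lemma \ref{lemmapoincarehoriz} applied for each $t_0$ (noting $\nabla\tilde u_{t_0}$ has no $x_1$-component, so $\|\nabla u-\nabla\tilde u_{t_0}\|_{L^p}\le \|\partial_1 u\|_{L^p}+\|\partial_2 u\|_{L^p}+\|\partial_2 u(t_0,\cdot)\|_{L^p(Q_\rho)}$), and then average the last slice-norm in $t_0\in G$ to get $\rho^{-1}\int_G (2\rho)\|\partial_2 u(t_0,\cdot)\|_{L^p((-\rho,\rho))}^p\,\mathrm{d}t_0\le 2\|\partial_2 u\|_{L^p(Q_\rho)}^p$; the term $\|\tilde u - \bar u\|$ is a vertical-Poincar\'e term on good slices, again bounded by $c\rho\|\partial_2 u\|_{L^p(Q_\rho)}$.

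The main obstacle is precisely this bookkeeping: a single fixed good slice does not suffice because its own vertical derivative is not controlled by the ambient $L^p$ norm; one must average over the (measure $\ge\rho$) family of good slices and track constants carefully so that the final bound is genuinely $c\rho\|\nabla u\|_{L^p(Q_\rho)}$ with $c$ independent of $\rho$ and $u$ (scale invariance is a useful consistency check — rescaling $Q_\rho\to Q_1$ shows both sides scale the same way, so it suffices to prove it for $\rho=1$). Everything else is the standard one-dimensional Poincar\'e inequality plus Fubini-type slicing of $SBV$ functions, both available from Section \ref{sec:notation} and \cite{AmbrosioFuscoPallara}.
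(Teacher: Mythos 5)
Your proposal is correct, but it takes a slightly different route from the paper's, and it contains a small misdiagnosis. You claim that a single fixed good vertical slice cannot work because "its own vertical derivative is not controlled by the ambient $L^p$ norm," and you therefore set up an average over the whole family of jump-free slices $G$. In fact the single-slice approach does work, and it is precisely what the paper does: after observing that the jump-free set $E\subset(-\rho,\rho)$ has measure at least $\rho$, one applies Chebyshev/Fubini once more to $\int_{-\rho}^\rho\int_{-\rho}^\rho|\partial_2 u(x_1,t)|^p\,\mathrm{d}t\,\mathrm{d}x_1$ to find a subset $E'\subset E$ of measure at least $\rho/2$ on which $\|\partial_2 u(x_1',\cdot)\|_{L^p((-\rho,\rho))}^p\le (2/\rho)\|\partial_2 u\|_{L^p(Q_\rho)}^p$, and then fixes a single $x_1'\in E'$ for which also the horizontal Poincar\'e estimate holds against a.e.\ other $x_1$. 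With $\bar u$ the one-dimensional Poincar\'e mean on that slice, the two estimates combine directly after integrating in $x_1$, with no averaging over slices needed. Your averaging variant is sound in principle — one writes $u-\bar u=(u-\tilde u_{t_0})+(\tilde u_{t_0}-\bar u)$, averages in $t_0\in G$ using $|G|\ge\rho$, and the cross terms are controlled by Lemma \ref{lemmapoincarehoriz} and Jensen — but it requires one extra piece of bookkeeping you gloss over: since your $\bar u$ is a double average over $G\times(-\rho,\rho)$ rather than the per-slice mean $m(t_0)$, the term $\|\tilde u_{t_0}-\bar u\|_{L^p}$ splits into a one-dimensional Poincar\'e piece $\|u(t_0,\cdot)-m(t_0)\|_{L^p}$ plus $|m(t_0)-\bar u|$, and the latter must itself be bounded by another application of horizontal Poincar\'e (it involves $\partial_1 u$, not $\partial_2 u$). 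So both routes work; the paper's is shorter because it resolves the obstacle you flagged at the selection stage rather than at the estimation stage.
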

\begin{proof}
Without loss of generality $x_*=0$.
Since horizontal slices of $u$ are in $W^{1,p}$, 
by the one-dimensional Poincar\'e inequality
one has
\begin{equation}\label{eqpoinchoriz}
\int_{-\rho}^\rho  |u(x_1,x_2)-u(x_1',x_2)|^p \dy \le (2\rho)^{p-1} \int_{Q_\rho} |\partial_1u|^p \dxy
\end{equation}
 for almost every $x_1$, $x_1'\in (-\rho,\rho)$.
For fixed $x_1$, consider now the vertical slices $t\mapsto u^{x_1}(t):=u(x_1,t)$. By the $SBV$ slicing theorem, 
since $\calH^1(J_u)\le\rho$,
there is a set $E\subset(-\rho,\rho)$ of measure at least $\rho$ such that $u^{x_1}\in W^{1,p}((-\rho,\rho))$ for $x_1\in E$. 
Further, by Fubini's theorem there is a subset $E'\subset E$ of measure at least $\rho/2$ such that $\int_{-\rho}^\rho{{ |\partial_2 u(x_1',t)|^p \dt}} \le 2/\rho \int_{Q_\rho} |\partial_2u|^p {{\dxy}}$ {{ for all $x_1'\in E'$}}.
We choose $x_1'\in E'$ such that (\ref{eqpoinchoriz}) holds for almost every $x_1\in(-\rho,\rho)$. By the one-dimensional Poincar\'e inequality there is $\bar u\in\R$ such that
\begin{equation*}
\int_{-\rho}^\rho  |u(x_1',x_2)-\bar u|^p \dy\le (2\rho)^p
\int_{-\rho}^\rho  |\partial_2 u(x_1',x_2)|^p \dy\le 2^{p+1} \rho^{p-1}
\int_{Q_\rho} |\partial_2u|^p \dxy\,.
\end{equation*}
Combining with (\ref{eqpoinchoriz}) and integrating in $x_1$ we conclude
\begin{equation*}
\int_{Q_\rho}  |u(x_1,x_2)-\bar u|^p \dy\le c \rho^p\int_{Q_\rho} |\nabla u|^p \dxy\,.
\end{equation*}
This finishes the proof.
\end{proof}

\begin{lemma}[Mollification with small jump set]\label{lemmatyp1enmoll}
 Let $u\in SBV^p_y(Q_r(x_*))$, with $\calH^1(J_u\cap Q_r(x_*))\le \rho$, { for} $0<\rho<r$. Then
 \begin{equation*}
  \int_{Q_{r-\rho}(x_*)} |\nabla (u\ast \varphi_\rho)|^p \dxy \le 
\left(1+c \frac{\calH^1(J_u\cap Q_r(x_*))^{1/p'}}{\rho^{1/p'}}\right)^p\int_{Q_r(x_*)} |\nabla u|^p {{\dxy}} \,.
  \end{equation*}
\end{lemma}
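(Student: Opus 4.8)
The plan is to estimate $\nabla(u\ast\varphi_\rho)$ pointwise on $Q_{r-\rho}(x_*)$ by exploiting that convolution with $\varphi_\rho$ only sees values of $u$ in a ball of radius $\rho/2$, and that on most such small squares $u$ has small (or vanishing) jump set, so a local Poincaré-type bound from Lemma \ref{lemmatyp1mollifsquare} applies. Without loss of generality take $x_*=0$. Fix $x\in Q_{r-\rho}$ and write $\nabla(u\ast\varphi_\rho)(x)=\int u(y)\nabla\varphi_\rho(x-y)\dyy$. Since $\int\nabla\varphi_\rho=0$, for any constant $\bar u_x\in\R$ we may subtract it: $\nabla(u\ast\varphi_\rho)(x)=\int (u(y)-\bar u_x)\nabla\varphi_\rho(x-y)\dyy$. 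Applying Hölder with exponents $p$ and $p'$ and $|D\varphi_\rho|\le c/\rho^3$, $|\supp\varphi_\rho|\le c\rho^2$, gives
\begin{equation*}
 |\nabla(u\ast\varphi_\rho)(x)|^p \le \frac{c}{\rho^{p+p}}\int_{Q_{\rho/2}(x)} |u(y)-\bar u_x|^p\dyy \cdot \rho^{2p/p'}\cdot\rho^{-p} \,,
\end{equation*}
so that, choosing $\bar u_x$ as the constant furnished by Lemma \ref{lemmatyp1mollifsquare} applied on $Q_{\rho/2}(x)$ (legitimate since $\calH^1(J_u\cap Q_{\rho/2}(x))\le\calH^1(J_u\cap Q_r)\le\rho$, and in fact one should use the sharper local count), we obtain $|\nabla(u\ast\varphi_\rho)(x)|^p\le c\,\rho^{-2}\int_{Q_{\rho/2}(x)}|\nabla u|^p\dyy$. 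This is the "crude" bound with a constant $c$ that is \emph{not} close to $1$; it is the right bound only on those $x$ near the jump set.

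The refinement needed to get the factor $(1+c\,\calH^1(J_u\cap Q_r)^{1/p'}/\rho^{1/p'})^p$ rather than a plain constant is the following dichotomy on each $x$: if the little square $Q_{\rho/2}(x)$ contains \emph{no} jump set at all, then $u$ restricted to it is in $W^{1,p}$ and a standard mollification estimate gives $|\nabla(u\ast\varphi_\rho)(x)|^p \le \int |\nabla u(y)|^p\varphi_\rho(x-y)\dyy$ (Jensen), which integrated over such $x$ contributes at most $\int_{Q_r}|\nabla u|^p$ with constant exactly $1$; whereas if $Q_{\rho/2}(x)$ does meet $J_u$, then $x$ lies within $\rho/2$ of $J_u$, so the set $G$ of such $x$ has measure $|G|\le c\,\rho\,\calH^1(J_u\cap Q_r)$ (a tubular-neighbourhood bound, using that $J_u$ is rectifiable and horizontal). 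On $G$ we use the crude bound. Splitting $\int_{Q_{r-\rho}}|\nabla(u\ast\varphi_\rho)|^p = \int_{Q_{r-\rho}\setminus G} + \int_G$, and on $\int_G$ applying Hölder in the form $\int_G f \le |G|^{1/p'}(\int f^{p'})^{1/p'}$ is not quite what is wanted; instead one writes $\int_G |\nabla(u\ast\varphi_\rho)|^p \le c\int_G \rho^{-2}\int_{Q_{\rho/2}(x)}|\nabla u(y)|^p\dyy\,\dxy$ and swaps the order of integration: for fixed $y$, the set of $x\in G$ with $|x-y|<\rho/2$ has measure $\le c\rho^2$, so $\int_G\cdots \le c\int_{Q_r}|\nabla u|^p$. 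To turn the two pieces into the claimed product form, estimate $\left(\int_{Q_{r-\rho}}|\nabla(u\ast\varphi_\rho)|^p\right)^{1/p} \le \left(\int_{Q_r}|\nabla u|^p\right)^{1/p} + \left(\int_G |\nabla(u\ast\varphi_\rho)|^p\right)^{1/p}$ by the triangle inequality in $L^p$ (splitting $Q_{r-\rho}=(Q_{r-\rho}\setminus G)\cup G$), bound the first term's region back up to $Q_r$, and for the second use the crude bound together with $|G|\le c\rho\calH^1(J_u\cap Q_r)$ and Hölder to pull out the factor $(\calH^1(J_u\cap Q_r)/\rho)^{1/p'}$.

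The main obstacle is bookkeeping the constants so that the $W^{1,p}$ region genuinely contributes with constant $1$ (not $1+o(1)$): this forces the clean Jensen estimate $|\nabla(u\ast\varphi_\rho)(x)| \le (\,|\nabla u|^p\ast\varphi_\rho\,)(x)^{1/p}$ on that region rather than any Poincaré-based detour, and it forces the splitting to be done at the level of $L^p$ norms via Minkowski, so that the jump contribution appears additively inside the $p$-th root and becomes a multiplicative $(1+\cdots)$ factor after raising to the $p$-th power. A secondary point is the tubular bound $|\{x: \dist(x,J_u\cap Q_r)<\rho/2\}|\le c\rho\,\calH^1(J_u\cap Q_r)$, which is immediate here since $J_u$ is a countable union of horizontal segments, but should be stated; and one must check $Q_{\rho/2}(x)\subset Q_r$ for $x\in Q_{r-\rho}$ (true since $\rho/2<\rho$), so that all the local quantities are controlled by their values on $Q_r$.
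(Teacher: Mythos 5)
Your overall dichotomy --- split $Q_{r-\rho}$ into the tubular set $G := \{x: Q_{\rho/2}(x)\cap J_u\ne\emptyset\}$ and its complement, use Jensen on the complement and a Poincar\'e-based ``crude'' bound on $G$ --- is a genuinely different decomposition from the paper's, and it is sound. The paper instead decomposes the integral defining $\partial_2(u\ast\varphi_\rho)(x)$ \emph{at each fixed $x$}: it integrates by parts outside the vertical shadow $\omega_x:=\{y\in Q_\rho(x): (y+\R e_2)\cap J_u\cap Q_\rho(x)\ne\emptyset\}$ and isolates a remainder $R(x)=\int_{\omega_x}\partial_2\varphi_\rho(x-y)(u(y)-\bar u)\,dy$. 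The exponent $1/p'$ in the Lemma comes precisely from H\"older applied to this remainder, producing $|\omega_x|^{1/p'}$ with $|\omega_x|\le 2\rho\,\calH^1(J_u\cap Q_\rho(x))$; integrating $|R|^p$ over $x$ then yields $\|R\|_{L^p}\le c(\calH^1(J_u)/\rho)^{1/p'}\|\nabla u\|_{L^p(Q_r)}$.

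The gap in your proposal is the final step where you claim to ``pull out the factor $(\calH^1(J_u\cap Q_r)/\rho)^{1/p'}$'' by H\"older. If you carry out the computation you sketched --- crude bound on $G$, swap order of integration, and bound $|\{x\in G:|x-y|<\rho\}|$ by $|G|\le c\rho\,\calH^1(J_u\cap Q_r)$ --- you obtain $\int_G|\nabla(u\ast\varphi_\rho)|^p\le c(\calH^1(J_u\cap Q_r)/\rho)\int_{Q_r}|\nabla u|^p$ and hence, after the $L^p$-splitting over disjoint sets, $\|\nabla(u\ast\varphi_\rho)\|_{L^p(Q_{r-\rho})}\le\bigl(1+c(\calH^1(J_u\cap Q_r)/\rho)^{1/p}\bigr)\|\nabla u\|_{L^p(Q_r)}$. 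The exponent is $1/p$, not $1/p'$; there is no H\"older step in your scheme that converts the measure of the tube $G$ into a $|G|^{1/p'}$ factor, because $\nabla(u\ast\varphi_\rho)$ has no higher integrability available. For $p<2$ your $1/p$ is actually a stronger bound than the Lemma's $1/p'$, for $p>2$ it is weaker; either way it does not match the statement. In fairness, a $1/p$ version would suffice for the downstream use in Proposition~\ref{proptyp1} and Theorem~\ref{theodensityfull} (all that is needed there is some positive power of $\calH^1/\rho$ tending to $0$), but it is not the Lemma as stated. A secondary, easily fixable, point: you invoke Lemma~\ref{lemmatyp1mollifsquare} on $Q_{\rho/2}(x)$, whose hypothesis would then be $\calH^1(J_u\cap Q_{\rho/2}(x))\le\rho/2$; the global assumption only gives $\le\rho$. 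Apply it instead on $Q_\rho(x)\subset Q_r$ (valid for $x\in Q_{r-\rho}$) and then restrict the $L^p$ norm of $u-\bar u$ to the smaller square $Q_{\rho/2}(x)$.
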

\begin{proof}
We can assume $x_*=0$.
 Fix $x\in Q_{r-\rho}$. Since horizontal slices of $u$ are in $W^{1,p}$, we have
 \begin{align}
\partial_1 (u\ast \varphi_\rho)(x)
&= \int_{Q_\rho(x)} \partial_1 u(y)  \varphi_\rho(x-y)\dyy=((\partial_1 u)\ast \varphi_\rho)(x).
\label{eqpartintx1}
 \end{align}
 The other component is more subtle, since $u$ jumps in the $x_2$ direction. 
 Let $\omega_x:=\{y\in Q_\rho(x): y+\R e_2\cap J_u\cap Q_\rho(x) \ne\emptyset\}$. We write
 \begin{equation*}
\partial_2 (u\ast \varphi_\rho)(x)= \int_{Q_\rho(x)} u(y) \partial_2 \varphi_\rho(x-y)\dyy
 \end{equation*}
and separate the integral into the part in $\omega_x$ and the part outside it. Outside $\omega_x$ we can integrate
by parts in the ${ x_2}$ direction. Therefore for any $\bar u\in\R$
 \begin{equation*}
  \partial_2 (u\ast \varphi_\rho)(x) = \int_{Q_\rho(x)\setminus\omega_x} \partial_2 u(y)\varphi_\rho(x-y) \dyy
  + \int_{\omega_x} \partial_2\varphi_\rho (x-y) (u(y)-\bar u) \dyy.
 \end{equation*}
 Let  $R(x):= \int_{\omega_x} \partial_2\varphi_\rho (x-y) (u(y)-\bar u) \dyy$ denote the last term. 
Combining with (\ref{eqpartintx1}) gives
 \begin{equation*}
\nabla (u\ast \varphi_\rho)(x) = \int_{Q_\rho(x)\setminus\omega_x} \nabla u(y)\varphi_\rho(x-y) \dyy
  + \int_{\omega_x} (\partial_1 u,0)(y)\varphi_\rho(x-y) \dyy + R(x)e_2
 \end{equation*}
 which immediately gives
 \begin{equation}\label{equastur}
|\nabla (u\ast \varphi_\rho)|(x) \le (|\nabla u|\ast \varphi_\rho)(x)+|R|(x).
 \end{equation}
 It remains to estimate $R$. Since $|\nabla\varphi_\rho|\le c/\rho^3$, H\"older's inequality yields
 \begin{equation*}
  |R|(x)\le \frac{c}{\rho^3}\|u-\bar u\|_{L^p(Q_\rho(x))} |\omega_x|^{1/p'}
 \end{equation*}
 where as usual $p'=p/(p-1)$.
 By Lemma \ref{lemmatyp1mollifsquare}, with the appropriate choice of $\bar u$ we have $\|u-\bar u\|_{L^p(Q_\rho(x))}\le c \rho \|\nabla u\|_{L^p(Q_\rho(x))}$.
We observe that  
$|\omega_x|\le 2\rho \calH^1(J_u\cap Q_\rho(x))$ and
$ \|\nabla u\|_{L^p(Q_\rho(x))}^p=(\chi_{Q_\rho}\ast |\nabla u|^p)(x)$, where $Q_\rho=Q_\rho(0)$. Therefore
 \begin{equation*}
  |R|^p(x)\le \frac{c \calH^1(J_u\cap Q_r)^{p/p'}}{\rho^{2p-p/p'}} (\chi_{Q_\rho}\ast |\nabla u|^p)(x)
 \end{equation*}
 for all $x\in Q_{r-\rho}$.
Integrating over $x$ leads to
 \begin{equation*}
  \|R\|_{L^p(Q_{r-\rho})}^p \le   \frac{c \calH^1(J_u\cap Q_r)^{p/p'}}{\rho^{2p-p/p'}} \rho^2\|\nabla u\|_{L^p(Q_r)}^p
 \end{equation*}
and, recalling (\ref{equastur}) and using $2-1/p'-2/p=1/p'$, we conclude
\begin{equation*}
 \|\nabla (u\ast \varphi_\rho)\|_{L^p(Q_{r-\rho})} \le\|\nabla u\|_{L^p(Q_r)}
 +
\frac{c \calH^1(J_u\cap Q_r)^{1/p'}}{\rho^{1/p'}} \|\nabla u\|_{L^p(Q_r)}\,.
\end{equation*}
\end{proof}

We are now in the position to state the main result of this subsection. 

\begin{proposition}\label{proptyp1}
 Let $u\in SBV^p_y(Q_r)$ with $\calH^1(J_u\cap Q_r)\le \rho\le r/10$. Then there is $v\in SBV^p_y(Q_r)$
 such that $v\in W^{1,p}(Q_{r/2})$, $v=u$ around $\partial Q_r$, $\calH^1(J_v\setminus J_u)=0$, 
 \begin{equation*}
  \|\nabla v\|_{L^p(Q_r)} \le \left(1+c \left(\frac{\calH^1(J_u\cap Q_r)}{\rho}\right)^{1/p'} + c\frac\rho r\right) \|\nabla u\|_{L^p(Q_r)}\,,
 \end{equation*}
and $|Dv|(Q_r)\le c|Du|(Q_r)$.
\end{proposition}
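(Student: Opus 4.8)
The plan is to combine the mollification estimate of Lemma~\ref{lemmatyp1enmoll} with an interpolation near $\partial Q_r$ that glues the mollified function back to $u$, so as not to change anything around the boundary. First I would fix a radius scale: mollify $u$ on the scale $\rho$ to obtain $w:=u\ast\varphi_\rho$, which is smooth on $Q_{r-\rho}$; by Lemma~\ref{lemmatyp1enmoll},
\[
\|\nabla w\|_{L^p(Q_{r-\rho})}\le\Big(1+c\big(\tfrac{\calH^1(J_u\cap Q_r)}{\rho}\big)^{1/p'}\Big)\|\nabla u\|_{L^p(Q_r)}.
\]
Since $\rho\le r/10$ we have $r/2\le r-\rho$, so $w$ is smooth in a neighbourhood of $\overline{Q_{r/2}}$. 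The function $v$ we want should agree with $w$ on $Q_{r/2}$ and with $u$ near $\partial Q_r$, with a controlled transition in the annulus $Q_r\setminus Q_{r/2}$.

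Next I would carry out the interpolation. Let $\eta\in C^\infty_c(Q_r;[0,1])$ be a cutoff with $\eta\equiv1$ on $Q_{3r/4}$ and $|\nabla\eta|\le c/r$, and on the annulus set $v:=\eta w+(1-\eta)u$ — more precisely $v:=u+\eta(w-u)$, which makes the $SBV$ structure transparent: $v=u$ where $\eta=0$ (hence $v=u$ around $\partial Q_r$), $v=w$ where $\eta=1$ (hence $v\in W^{1,p}$ on $Q_{r/2}$), and $J_v\subset J_u$ since $w$ is continuous, giving $\calH^1(J_v\setminus J_u)=0$ and also $[v]\nu_v=\eta[u]\nu_u\in[0,\infty)e_2$, so $v\in SBV^p_y(Q_r)$. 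On the annulus, $\nabla v=\nabla u+\eta\nabla(w-u)+(w-u)\otimes\nabla\eta$. The first two terms are bounded in $L^p(Q_r\setminus Q_{r/2})$ by $\|\nabla u\|_{L^p(Q_r)}+\|\nabla w\|_{L^p(Q_{r-\rho})}$, which is already of the right form. For the last term I would use Lemma~\ref{lemmatyp1mollifsquare} to pick $\bar u\in\R$ with $\|u-\bar u\|_{L^p(Q_r)}\le cr\|\nabla u\|_{L^p(Q_r)}$, replace $w-u$ by $(w-\bar u)-(u-\bar u)$ (legitimate since $\nabla\eta$ integrates... rather, since the decomposition is pointwise), and estimate $\|w-\bar u\|_{L^p(Q_{r-\rho})}=\|(u-\bar u)\ast\varphi_\rho\|_{L^p}\le\|u-\bar u\|_{L^p(Q_r)}$; hence $\|(w-u)\otimes\nabla\eta\|_{L^p}\le\frac{c}{r}\|u-\bar u\|_{L^p(Q_r)}\le c\|\nabla u\|_{L^p(Q_r)}$. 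Assembling the three contributions on the annulus with the bound on $Q_{r-\rho}$ gives the claimed estimate with the extra additive term $c\rho/r$ absorbing the mollification error on the thin strip $Q_{r-\rho}\setminus Q_{3r/4}$ (which has measure comparable to $r\rho$ if one is careful, or one simply notes $\rho\le r/10$ makes $r-\rho\ge 9r/10>3r/4$, so $w$ is smooth on all of $Q_{3r/4}$ and no such strip term is needed beyond what is already there — the $c\rho/r$ then comes only from comparing $w$ to $u$ via the Poincar\'e constant, which scales like $r$ while the mollification displaces by $\rho$).

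For the last claim $|Dv|(Q_r)\le c|Du|(Q_r)$ I would argue that $|Dv|=|D^{ac}v|+|D^Jv|$ (no Cantor part, as $v\in SBV$). The jump part obeys $|D^Jv|(Q_r)=\int_{J_v}|[v]|\le\int_{J_u}|[u]|=|D^Ju|(Q_r)$ since $|[v]|=\eta|[u]|\le|[u]|$. The absolutely continuous part is controlled using the formula for $\nabla v$ above together with $|\nabla w|\le|\nabla u|\ast\varphi_\rho$ pointwise (the bound \eqref{equastur} without the $R$ term would not suffice, but $\|\nabla w\|_{L^1}\le\|\nabla u\|_{L^1}+\|R\|_{L^1}$ with $\|R\|_{L^1(Q_{r-\rho})}\le\frac{c}{\rho^2}\|u-\bar u\|_{L^1(Q_r)}\cdot$(measure factors), and $\|u-\bar u\|_{L^1}\le|Du|(Q_r)\cdot r$ by Poincar\'e in $BV$) so that $\|\nabla w\|_{L^1(Q_{r-\rho})}\le c|Du|(Q_r)$, and similarly the transition term is bounded by $\frac{c}{r}\|u-\bar u\|_{L^1}\le c|Du|(Q_r)$. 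The main obstacle I anticipate is precisely this $L^1$/$BV$ bound: Lemma~\ref{lemmatyp1enmoll} is stated in $L^p$, so to get the $|Dv|$ estimate I must redo the $R$-term estimate of its proof at the level of $L^1$, which means invoking a $BV$-Poincar\'e inequality (in the spirit of Lemma~\ref{lemmatyp1mollifsquare} but with $|Du|$ in place of $\|\nabla u\|_{L^p}$) rather than the $SBV^p$ one — a variant the paper will presumably also need and which follows the same slicing argument; everything else is a routine cutoff computation.
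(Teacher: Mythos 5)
Your construction $v=u+\eta(w-u)$ with $w:=u\ast\varphi_\rho$ and a cutoff $\eta$ on scale $r$ (transition annulus $Q_r\setminus Q_{3r/4}$, $|\nabla\eta|\le c/r$) does not yield the required multiplicative bound. The transition term $(w-u)\otimes\nabla\eta$ is estimated, by your own computation, as
$\frac{c}{r}\|u-\bar u\|_{L^p(Q_r)}\le c\|\nabla u\|_{L^p(Q_r)}$,
and this $c$ is an absolute constant, not $c\rho/r$. Your remark that ``the $c\rho/r$ comes from \ldots the mollification displaces by $\rho$'' implicitly assumes the $W^{1,p}$ estimate $\|u-u\ast\varphi_\rho\|_{L^p}\le c\rho\|\nabla u\|_{L^p}$, which fails for $SBV$ functions: near $J_u$ the difference $u-u\ast\varphi_\rho$ is of order $[u]$, not $\rho\nabla u$, and $[u]$ is uncontrolled by $\|\nabla u\|_{L^p}$. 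Thus your argument only gives
$\|\nabla v\|_{L^p(Q_r)}\le\bigl(1+c(\calH^1(J_u\cap Q_r)/\rho)^{1/p'}+c\bigr)\|\nabla u\|_{L^p(Q_r)}$,
which is useless when Proposition~\ref{proptyp1} is iterated in Theorem~\ref{theodensityfull}, where the $(1+\text{small})$ factors must multiply to something still of the form $(1+\text{small})$.

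The paper's proof differs precisely where yours breaks down: (a) the cutoff $\psi$ is taken on the small scale, with $\psi\equiv1$ on $Q_R$, $\supp\psi\subset Q_{R+\rho}$, $|\nabla\psi|\le c/\rho$; (b) the radius $R\in(r/2,r-3\rho)$ is chosen by pigeonhole over $\sim r/\rho$ concentric annuli so that $\|\nabla u\|^p_{L^p(Q_{R+3\rho}\setminus Q_{R-2\rho})}\le\frac{c\rho}{r}\|\nabla u\|^p_{L^p(Q_r)}$ --- without this, all the gradient energy could sit where $\nabla\psi$ acts; and (c) the $\rho$-thin annulus is covered by $\sim r/\rho$ squares of side $2\rho$ with finite overlap, and Lemma~\ref{lemmatyp1mollifsquare} is applied in each to get $\|u-u\ast\varphi_\rho\|_{L^p(Q_\rho(y_i))}\le c\rho\|\nabla u\|_{L^p(Q_{2\rho}(y_i))}$, so the local Poincar\'e constant $\rho$ cancels the $1/\rho$ from $\nabla\psi$. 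Only (b) and (c) together produce the $c\rho/r$ term. You have the right building blocks (Lemma~\ref{lemmatyp1enmoll} for the interior, a cutoff, a Poincar\'e inequality), but without the good-radius selection and the local small-square covering the estimate degrades to $(1+c)$.

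Your concern about the $|Dv|$ bound is overcautious: no new ``$BV$-Poincar\'e with discontinuities'' is needed. The paper uses the standard $BV$ mollification estimate $\|u-u\ast\varphi_\rho\|_{L^1(Q_{r-\rho})}\le c\rho|Du|(Q_r)$, valid for any $BV$ function because $|Du|$ already contains the jump contribution; combined with $|D\psi|\le c/\rho$ and convexity of the other terms, this gives $|Dv|(Q_r)\le c|Du|(Q_r)$ directly.
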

\begin{proof}
 We select $R\in (r/2, r-3\rho)$ such that
\begin{equation*}
 \|\nabla u\|_{L^p(Q_{R+3\rho}\setminus Q_{R-2\rho})}^p \le \frac{c \rho}{r} \|\nabla u\|^p_{L^p(Q_r)}\,.
\end{equation*}
Fix $\psi\in C^\infty_c(Q_{R+\rho};[0,1])$ with $\psi=1$ in $Q_{R}$ and $|\nabla\psi|\le c/\rho$.
We set
\begin{equation*}
 v := \psi (u\ast\varphi_\rho) + (1-\psi) u\,,
\end{equation*}
so that $J_v\subset J_u\setminus Q_R$ 
and $[v]=(1-\psi)[u]$ on $J_v$ (both up to $\calH^1$-null sets), which implies $v\in SBV^p_y(Q_r)$.
Inside $Q_{R}$ we have $v=u\ast\varphi_\rho\in C^\infty$, outside $Q_{R+\rho}$ we have $v=u$. 
By Lemma \ref{lemmatyp1enmoll} applied to $Q_{R+\rho}$ we have
$\|\nabla v\|_{L^p(Q_R)} \le \|\nabla u\|_{L^p(Q_{R+\rho})} (1+ c (\calH^1(J_u)/\rho)^{1/p'})$.

The {{elastic}} energy in the interpolation region
is controlled by 
\begin{align*}
\|\nabla v\|_{L^p(Q_{R+\rho}\setminus Q_R)}^p
\le& c\|\nabla (u\ast\varphi_\rho)\|_{L^p(Q_{R+\rho}\setminus Q_R)}^p + c \|\nabla u\|_{L^p(Q_{R+\rho}\setminus Q_R)}^p \\
&+ \frac{c}{\rho^p}\| (u\ast\varphi_\rho)-u\|_{L^p(Q_{R+\rho}\setminus Q_R)}^p.
\end{align*}
We can cover this set by $c\lfloor r/\rho\rfloor$ squares $Q_\rho(y_i)$ of side length $2\rho$,
such that the larger squares $Q_{2\rho}(y_i)$ have finite overlap and are contained in 
$Q_{R+3\rho}\setminus Q_{R-2\rho}$.
 By Lemma \ref{lemmatyp1enmoll} with $r=2\rho$ we have, for each $i$,
$\|\nabla (u\ast\varphi_\rho)\|_{L^p(Q_\rho(y_i))}\le c 
\|\nabla u\|_{L^p(Q_{2\rho}(y_i))}$.
By  Lemma \ref{lemmatyp1mollifsquare} applied to $Q_{2\rho}(y_i)$ there is $\bar u\in\R$ with
$\| u-\bar u\|_{L^p(Q_{2\rho}(y_i))}\le c\rho \|\nabla u\|_{L^p(Q_{2\rho}(y_i))}$.
This implies
$\| \varphi_\rho\ast u-\bar u\|_{L^p(Q_{\rho}(y_i))}\le c\rho \|\nabla u\|_{L^p(Q_{2\rho}(y_i))}$, and therefore
$\| \varphi_\rho\ast u- u\|_{L^p(Q_{\rho}(y_i))}\le c\rho \|\nabla u\|_{L^p(Q_{2\rho}(y_i))}$.
Summing over all squares we obtain
\begin{equation*}
\|\nabla v\|_{L^p(Q_{R+\rho}\setminus Q_R)}^p
\le  c \|\nabla u\|_{L^p(Q_{R+3\rho}\setminus Q_{R-2\rho})}^p\le \frac{c \rho}{r} \|\nabla u\|^p_{L^p(Q_r)}\,.
\end{equation*}
Collecting terms, we have
\begin{equation*}
\|\nabla v\|_{L^p(Q_r)}^p \le
 (1+ c (\calH^1(J_u)/\rho)^{1/p'})^p\|\nabla u\|_{L^p(Q_{R+\rho})}^p
 +  \frac{c \rho}{r} \|\nabla u\|^p_{L^p(Q_r)} + \|\nabla u\|_{L^p(Q_r\setminus Q_{R+\rho})}^p
\end{equation*}
which concludes the proof of the first estimate. To control the total variation of $Dv$, by convexity
one only needs to estimate the term $\|(u-u\ast \varphi_\rho)D\psi\|_{L^1(Q_r)}$, which is done 
using the Poincar\'e-type estimate $\|u-u\ast{{\varphi_\rho}}\|_{L^1(Q_{r-\rho})} \le c \rho |Du|(Q_r)$ and $|D\psi|\le c/\rho$.
\end{proof}

\subsubsection{Constructions: Type II (bad squares)}
We consider here squares where a substantial amount of jump set is present, without making any other structural assumption. 
The {{part}} where {{the jump set}} is approximately
flat will be treated later, so that this construction will be used only on ``bad'' squares, where the jump set is present but irregular (on the scale
of the square).
\begin{proposition}\label{proptyp2}
 For any $u\in SBV_y^p(Q_r)$ there is $v\in SBV_y^p(Q_r)$ with $u=v$ around $\partial Q_r$, 
 $\|\nabla v\|_{L^p(Q_r)}\le c \|\nabla u\|_{L^p(Q_r)}$, $\calH^1(J_v)\le c \calH^1(J_u)$, $|Dv|(Q_r)\le c |Du|(Q_r)$,
 and such that $J_v\subset S\cup (J_u\setminus Q_{r/2})\cup N$, with $S$ the union of at most $c\calH^1(J_u\cap Q_r)/r$ horizontal segments and $\calH^1(N)=0$.
\end{proposition}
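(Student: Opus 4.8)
The plan is as follows. If $\calH^1(J_u\cap Q_r)\le r/10$ we are already in the setting of Proposition~\ref{proptyp1}: applying it with $\rho:=\calH^1(J_u\cap Q_r)$ yields a $v$ that is Sobolev on $Q_{r/2}$, so $S=\emptyset$ works and all four estimates (including $|Dv|(Q_r)\le c|Du|(Q_r)$, the last assertion of that proposition) follow. So assume $L:=\calH^1(J_u\cap Q_r)>r/10$ and set $M:=\lceil 10L/r\rceil\ge1$. By a Fubini/coarea averaging argument I first fix radii $r/2<R'<R<r$ with $\|\nabla u\|_{L^p(Q_R\setminus Q_{R'})}^p\le\frac{c}{r}\|\nabla u\|_{L^p(Q_r)}^p$, together with heights $-R<t_0<\dots<t_M<R$, $t_i-t_{i-1}\sim r/M$, such that each line $\{x_2=t_i\}$ (and $\partial Q_R$) carries no jump of $u$ and admits traces whose $L^p$ norm, and the $L^p$ norm of whose tangential derivative, are controlled by the corresponding vertical average over the adjacent strip $\Sigma_i:=(-R,R)\times(t_{i-1},t_i)$. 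The set $S$ will be contained in $\bigcup_{i=1}^M[-R,R]\times\{t_i\}$, hence a union of at most $M\le cL/r$ horizontal segments.

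The core of the construction is, on each strip $\Sigma_i$, to relocate all jumps of $u$ lying in $\Sigma_i$ onto the top line $\{x_2=t_i\}$ without converting jump into bulk energy. Using the $SBV$ slicing of $u$ in the $x_2$ direction, one keeps the absolutely continuous part of $\partial_2u$ and pushes the entire jump measure of the strip upward: on $\Sigma_i$ take $\tilde v$ continuous in $x_2$, with $\partial_2\tilde v=\partial_2u$, bottom trace $u(\cdot,t_{i-1}^+)$, and hence top trace $u(\cdot,t_i^-)-\Lambda_i$, where
\begin{equation*}
\Lambda_i(x_1):=\sum\bigl\{\,[u](x_1,s)\ :\ (x_1,s)\in J_u,\ t_{i-1}<s<t_i\,\bigr\}\ \ge\ 0 ;
\end{equation*}
equivalently, $\tilde v$ equals $u$ minus the cumulative horizontal-jump function of $u$ on the strip. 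Then $[\tilde v]=[u](\cdot,t_i)+\Lambda_i\ge0$ on $\{x_2=t_i\}$ and $\tilde v$ has no other jump in $\Sigma_i$, so $\tilde v\in SBV^p_y$; moreover $D^J\tilde v$ is the vertical push-forward of $D^Ju$ onto the lines $\{x_2=t_i\}$, whence $|D^J\tilde v|(Q_r)\le|D^Ju|(Q_r)$ and $\calH^1(J_{\tilde v})\le 2R(M+1)\le cL$. The tangential derivative $\partial_1\tilde v$ on $\Sigma_i$ is bounded by $c\|\partial_1u\|_{L^p(\Sigma_i)}$ plus a boundary term $c(t_i-t_{i-1})^{1/p}\|\partial_1u(\cdot,t_{i-1})\|_{L^p((-R,R))}$, which the averaging choice of $t_{i-1}$ turns into $c\|\partial_1u\|_{L^p(\Sigma_{i-1})}$; summing over $i$ gives $\|\nabla\tilde v\|_{L^p(Q_R)}\le c\|\nabla u\|_{L^p(Q_R)}$.

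To glue back to $u$, keep $v=u$ outside $Q_R$ and, on the buffer annulus $Q_R\setminus Q_{R'}$, interpolate $v:=\psi\tilde v+(1-\psi)u$ with $\psi\in C_c^\infty(Q_R;[0,1])$, $\psi=1$ on $Q_{R'}$, $|\nabla\psi|\le c/r$. The extra term $\|(\tilde v-u)\nabla\psi\|_{L^p}$ is controlled by a Poincar\'e-type estimate, as in the proof of Proposition~\ref{proptyp1}, together with the choice of $R,R'$; similarly one controls $|D(\tilde v-u)|$ on the annulus, giving $|Dv|(Q_r)\le c|Du|(Q_r)$. In the annulus $J_v\subset J_{\tilde v}\cup J_u\subset S\cup(J_u\setminus Q_{r/2})$, and since $Q_{r/2}\subset Q_{R'}$ one gets $J_v\cap Q_{r/2}\subset S$; collecting terms yields all four claimed bounds, with $N:=J_v\setminus\bigl(S\cup(J_u\setminus Q_{r/2})\bigr)$ an $\calH^1$-null set.

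The step I expect to be the main obstacle is the relocation in the second paragraph. It must be carried out so that (i) no jump energy is converted into bulk $L^p$ energy — this rules out simply mollifying $u$, which would inflate $\|\nabla v\|_{L^p}$ by a factor depending on the mollification scale; (ii) no new \emph{vertical} jumps appear at the endpoints of the original jump segments — here one uses that, for $u\in SBV^p_y$, the jump height along each jump line is continuous and vanishes at the segment endpoints (a genuine discontinuity of the jump height would force a vertical jump of $u$), so that $\Lambda_i$ and the associated trace corrections stay within $SBV^p_y$, and this same structure underlies the tangential control of $\partial_1\tilde v$; and (iii) the total-variation bound $|Dv|(Q_r)\le c|Du|(Q_r)$ is preserved. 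Reconciling (i) and (ii) — moving the jumps while keeping them jumps and controlling the tangential derivative in the absence of any second-order information on $u$ — is the crux; the remainder (choosing $M\sim L/r$ lines, the averaging selections of $R,R',t_i$, and the buffer interpolation) is routine bookkeeping once this is settled.
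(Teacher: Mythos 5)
Your construction is genuinely different from the paper's, and it has a real gap at exactly the point you flag as ``the crux.'' The issue with the strip-by-strip vertical relocation is that
$\tilde v = u(\cdot,t_{i-1}) + V$, with $V(x_1,x_2)=\int_{t_{i-1}}^{x_2}\partial_2 u(x_1,s)\ds$, is built by prescribing only the vertical slices of $\tilde v$; nothing in that prescription controls the horizontal behaviour. Writing $\tilde v = u - \Lambda_i^{\mathrm{cumul}}$, where $\Lambda_i^{\mathrm{cumul}}(x_1,x_2)$ is the cumulative jump of the vertical slice $u(x_1,\cdot)$ on $(t_{i-1},x_2)$, one needs $\partial_1\Lambda_i^{\mathrm{cumul}}\in L^p$ to conclude $\tilde v\in SBV^p_y$ and to get the claimed bound $\|\partial_1\tilde v\|_{L^p(\Sigma_i)}\le c\|\partial_1 u\|_{L^p(\Sigma_i)}+\ldots$. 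Your justification --- that a jump of the jump-height would create a vertical jump of $u$, so the jump-height is ``continuous'' --- rules out jump discontinuities of $\Lambda_i^{\mathrm{cumul}}$ in $x_1$, but continuity is far from $W^{1,p}$ regularity, and continuity does not even give that $\tilde v\in BV$. Distributionally $D_2(D_1\Lambda_i^{\mathrm{cumul}}) = D_1\bigl([u]\calH^1\LL J_u\bigr)$ is the tangential derivative of the jump measure along $J_u$, and $u\in SBV^p_y$ gives no control on this object. So the relocation step is not established, and I do not see an elementary way to salvage it within $SBV^p_y$.

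The paper avoids this entirely with a much simpler device: choose a single good vertical line $\{x_1=s\}$ (by averaging, so that the slice has $\le c\calH^1(J_u\cap Q_r)/r$ jumps, $[u](s,\cdot)\ge 0$, its $L^p$ gradient is controlled, and its total jump is $\le \frac{c}{r}\int_{J_u}[u]\dH$), set $w(x_1,x_2):=u(s,x_2)$, and interpolate $v:=\psi w+(1-\psi)u$ with a cutoff $\psi$ that is $1$ on $Q_{r/2}$. Then $J_w$ is automatically a union of $\le c\calH^1(J_u\cap Q_r)/r$ horizontal segments of length $2r$, all estimates follow from the slicing choice of $s$, and the interpolation term is controlled by the horizontal Poincar\'e inequality (Lemma \ref{lemmapoincarehoriz}) since $w=u$ on $\{x_1=s\}$. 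Your covering/averaging scaffolding (choice of $R,R'$, the $t_i$, and the case split at $\calH^1(J_u\cap Q_r)\sim r$) is reasonable bookkeeping, and the case $\calH^1(J_u\cap Q_r)\le r/10$ via Proposition~\ref{proptyp1} is fine, but the central step does not go through as written.
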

\begin{proof}
 Pick $s\in (- r, r)$ such that {$[u](s,x_2)\ge 0$ for all $x_2$,}
 \begin{align*}
 \sum_{J_u\cap (se_1+\R e_2)} [u] \le \frac c r \int_{J_u} [u] \dH,
 && \calH^0\left(J_u\cap (se_1+\R e_2)\right)\le \frac{c}{r} \calH^1(J_u\cap Q_r),\\
   \text{and } & & \int_{-r}^r |\partial_2 u|^p(s,x_2) \dy \le \frac{c}{r} \int_{Q_r} |\partial_2 u|^p \dxy,
 \end{align*}
 in the sense of slicing.
 Set $w(x_1,x_2):=u(s,x_2)$. Then $w\in SBV^p(Q_r)$, $S:=J_w$ is the union of at most $ \frac{c}{r} \calH^1(J_u)$ horizontal segments
 of length $2r$ and $\|\nabla w\|_{L^p(Q_r)}\le c \|\nabla u\|_{L^p(Q_r)}$. Moreover,
 \begin{eqnarray}\label{eq:A}
{|D^Jw|(Q_r)}=\int_{J_w} [w] \dH =2r  \sum_{J_u\cap (se_1+\R e_2)} [u] \le c \int_{J_u} [u] \dH\,.
 \end{eqnarray}
 It remains to interpolate. We fix $\psi\in C^\infty_c(Q_{r};[0,1])$ with $\psi=1$ on $Q_{r/2}$ and $|\nabla\psi|\le c/r$, and
 set
\begin{equation*}
 v := \psi w + (1-\psi) u\,,
\end{equation*}
so that $J_v\subset (J_u\setminus Q_{r/2})\cup J_w\cup N$ with $\calH^1(N)=0$. 
Further, from $[v]=\psi[w]+(1-\psi)[u]$ and $0\le\psi\le1$ one easily verifies that $[u]\ge0$ implies $[v]\ge0$,
hence $v\in SBV^p_y(Q_r)$ {{with $\calH^1(J_v)\leq c\calH^1(J_u)$}}.

 By the Poincar\'e inequality in the $x_1$ direction (Lemma \ref{lemmapoincarehoriz}), $w=u$ on $\{x_1=s\}$ and the choice of $s$, we have
 \begin{equation*}
  \|w-u\|_{L^p(Q_r)}\le cr \|\nabla u\|_{L^p(Q_r)}\,.
 \end{equation*}
 Therefore
 \begin{equation*}
  \|\nabla {{v}}\|_{L^p(Q_r)}\le 
 c\|\nabla u\|_{L^p(Q_r)} + \frac{c}{r} \|w-u\|_{L^p(Q_r)}
 + \|\nabla w\|_{L^p(Q_r)}\le c \|\nabla u\|_{L^p(Q_r)} \,.
 \end{equation*}
The {{estimate for $|Dv|$ is}} done as in Proposition \ref{proptyp1}{{, using \eqref{eq:A}}}.
 \end{proof}

\subsubsection{Constructions: Type III (jump squares)}
{{We finally deal with squares that contain regular parts}} of $J_u$, 
in the sense of blow-ups. Although the normal is almost everywhere $e_2$, the main part of the jump {{set}} needs not be a segment,
{even locally,} but can
be a countable union of segments contained in a $C^1$ curve. 

\begin{proposition}\label{proptyp3}
 Let $u\in SBV^p_y(Q_r)$ and $\rho\in (0,r/4)$ 
  be such that there is $\gamma\in C^1((-r,r);{{(-r/2,r/2)}})$ with
 $\calH^1(J_u \Delta \{(x_1,\gamma(x_1){{)}}: x_1\in (-r,r)\})<\rho/4$.
 
 Then there is $v\in SBV^p_y(Q_r)$ such that $u=v$ around $\partial Q_r$, 
 {$J_v\subset (J_u\setminus Q_{r-\rho}) \cup S\cup N$, with $S$ the union 
 of two segments and  $\calH^1(N)=0$,}
 $\|\nabla v\|_{L^p(Q_r)}\le c (r/\rho)\|\nabla u\|_{L^p(Q_r)}$, 
$\calH^1(J_v\cap Q_{r-\rho})\le 2r+2\rho$, and 
$\calH^1(J_v\setminus Q_{r-\rho})\le c\rho $.
\end{proposition}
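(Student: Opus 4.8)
The plan is to use the orientation constraint twice: once to pin down the geometry of $\gamma$, and once to keep every modification inside $SBV^p_y$. Since $u\in SBV^p_y(Q_r)$, the jump set $J_u$ is, up to $\calH^1$‑null sets, a countable union of horizontal segments $\ell_j$ at heights $c_j$. I first claim $\mathrm{TV}(\gamma)\le\rho/4$. Write $\Gamma:=\{(x_1,\gamma(x_1)):x_1\in(-r,r)\}$. For a $C^1$ function, $\{\gamma=c\}\cap\{\gamma'\neq0\}$ consists of isolated points, so $\gamma'=0$ $\calL^1$‑a.e.\ on each level set; parametrizing $\Gamma\cap\ell_j$ over $\{\gamma=c_j\}$ with the weight $\sqrt{1+\gamma'^2}$ shows $\Gamma\cap J_u\subset\Gamma|_{\{\gamma'=0\}}$ up to $\calH^1$‑null sets, hence $\Gamma|_{\{\gamma'\neq0\}}\subset\Gamma\setminus J_u$ and $\mathrm{TV}(\gamma)\le\int_{\{\gamma'\neq0\}}\sqrt{1+\gamma'^2}\,\dx\le\calH^1(\Gamma\setminus J_u)<\rho/4$. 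Fixing $\bar\gamma$ in the range of $\gamma$ (so $|\bar\gamma|<r/2$) we get $|\gamma-\bar\gamma|\le\rho/4$, so $J_u\cap Q_r$ lies in the thin strip $\{|x_2-\bar\gamma|<\rho/2\}$ except for the ``spurious'' part $J_u\setminus\Gamma$ of length $<\rho/4$; moreover $\calH^1(J_u\cap Q_r)\le\calH^1(\Gamma)+\rho/4\le 2r+\mathrm{TV}(\gamma)+\rho/4\le 2r+\rho/2$, and since $\Gamma$ already has length $\ge2(r-\rho)$ over $\{|x_1|<r-\rho\}$ one gets $\calH^1(J_u\cap(Q_r\setminus Q_{r-\rho}))\le c\rho$.

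\textbf{The construction.} By averaging over levels in $(\bar\gamma+\rho/4,\bar\gamma+\rho/2)$ and in $(\bar\gamma-\rho/2,\bar\gamma-\rho/4)$ (most of which miss $J_u$, because $J_u\setminus\Gamma$ is short and $\Gamma$ sits in $\{|x_2-\bar\gamma|<\rho/4\}$) pick heights $s^+$, $s^-$ such that the horizontal slices $x_1\mapsto u(x_1,s^\pm)$ lie in $W^{1,p}$, avoid $J_u$, and obey $\int_{-r}^r|\partial_1u(x_1,s^\pm)|^p\,\dx\le\frac c\rho\|\partial_1u\|^p_{L^p(Q_r)}$; by a further averaging pick a cutoff radius $R\in(r-\rho,r-\rho/2)$ with trace of $Du$ on $\partial Q_R$ and $\calH^1(J_u\cap(Q_R\setminus Q_{R-\rho/10}))$ controlled by $\frac c\rho$ times bulk quantities. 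On $M:=\{|x_1|<R\}\times(s^-,s^+)$ replace $u$ by
\[\hat u(x_1,x_2):=u(x_1,s^-)+\frac{x_2-s^-}{s^+-s^-}\Big(\int_{s^-}^{s^+}\partial_2u(x_1,t)\,\dt\Big)+\Sigma(x_1)\,\chi_{\{x_2>\bar\gamma\}},\]
where $\Sigma(x_1):=\sum_{t:(x_1,t)\in J_u,\ s^-<t<s^+}[u](x_1,t)\ge0$ is the (nonnegative) jump content of the vertical slice. Then $\hat u=u$ on $\{x_2=s^\pm\}$, $\hat u$ has the single horizontal jump $\{x_2=\bar\gamma\}$ with $[\hat u]=\Sigma\ge0$, and $\partial_2\hat u=(s^+-s^-)^{-1}\int_{s^-}^{s^+}\partial_2u$, so $\|\partial_2\hat u\|_{L^p(M)}\le\|\partial_2u\|_{L^p(Q_r)}$ by Hölder on the strip of height $\le\rho$. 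The term $\partial_1\hat u$ is controlled by the good‑slice bounds together with $\Sigma'$, which is only legitimate after first mollifying $u$ in the $x_1$‑variable on a scale $\eta\to0$ (this keeps $J_u$ horizontal, keeps jumps nonnegative, does not increase $\|\nabla u\|_{L^p}$ by convexity, replaces $\Sigma$ by $\Sigma\ast\varphi_\eta\in C^\infty$, and converges back to $u$); one runs the argument for $u_\eta$ and passes $\eta\to0$ at the end. Finally remove the spurious part $J_u\setminus\Gamma$ lying in $Q_{r-\rho}$ but outside $M$ by applying the Type I construction of Proposition~\ref{proptyp1} on a finite‑overlap cover by small squares (each costing a bounded factor, touching no neighborhood of $\partial Q_r$), and glue with $v:=\psi\hat u+(1-\psi)u$ on $M$, $v:=u$ elsewhere, for $\psi\in C^\infty_c(\{|x_1|<R\};[0,1])$ equal to $1$ on $\{|x_1|<R-\rho/10\}$ with $|\nabla\psi|\le c/\rho$.

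\textbf{Estimates.} Because $\hat u=u$ on $\{x_2=s^\pm\}$ and $v=u$ for $|x_1|>R$, $v$ coincides with $u$ near $\partial Q_r$; since $[v]=\psi[\hat u]+(1-\psi)[u]$ with $[\hat u],[u]\ge0$, $v\in SBV^p_y(Q_r)$. By construction $J_v\subset\{x_2=\bar\gamma\}\cup(J_u\setminus Q_{r-\rho})\cup N$ with $\calH^1(N)=0$; thus $J_v\cap Q_{r-\rho}$ sits in the single segment $\{x_2=\bar\gamma\}\cap Q_{r-\rho}$ (a fortiori in a union of two segments $S$), of length $\le2(r-\rho)\le2r+2\rho$, while $J_v\setminus Q_{r-\rho}\subset\big(J_u\cap(Q_r\setminus Q_{r-\rho})\big)\cup\big(\{x_2=\bar\gamma\}\cap\{R-\rho/10<|x_1|<R\}\big)$ has length $\le c\rho$ by the first paragraph. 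For the gradient, $\nabla v=\nabla u$ away from $M$, the Type I squares, and the collar $\{R-\rho/10<|x_1|<R\}$; on $M$ use the bounds above; on the collar use $\|\nabla v\|^p_{L^p}\le c\|\nabla\hat u\|^p_{L^p}+c\|\nabla u\|^p_{L^p}+\frac c{\rho^p}\|\hat u-u\|^p_{L^p(\mathrm{collar})}$ together with the horizontal Poincaré inequality (Lemma~\ref{lemmapoincarehoriz}, applicable since the collar is thin in $x_1$ and $J_u$ there is horizontal), which produces the factor $r/\rho$. Collecting the terms gives $\|\nabla v\|_{L^p(Q_r)}\le c(r/\rho)\|\nabla u\|_{L^p(Q_r)}$.

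\textbf{Main obstacle.} The delicate step is the flattening: one must transport the jump content onto a single horizontal segment \emph{without} leaving $SBV^p_y$ — which is possible only because we redistribute the nonnegative quantity $\Sigma$, never a signed one — and \emph{without} paying more than a constant factor in $\|\nabla v\|_{L^p}$ even though $\Sigma$ is unbounded, which forces $\Sigma$ to be kept in the singular part and its lack of $W^{1,p}$‑regularity to be absorbed by the $x_1$‑mollification. Conceptually, no bi‑Lipschitz straightening of $\Gamma$ is available, since any nontrivial vertical shear turns a horizontal jump set into a slanted one and destroys the constraint; it is exactly the rigidity from the first paragraph — $\gamma$ is forced to be almost constant — that makes the hand‑made flattening work, and keeping the various small parameters ($\rho/8$, $\rho/10$, the level choices, the radius $R$) coherent so that all four conclusions hold simultaneously is the bookkeeping heart of the argument.
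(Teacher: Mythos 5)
Your plan interpolates \emph{vertically}: you replace $u$ on a thin horizontal strip $M$ by a function that carries the entire vertical jump content $\Sigma(x_1)=\sum_{t:(x_1,t)\in J_u}[u](x_1,t)$ onto the single line $\{x_2=\bar\gamma\}$. That redistribution is where the argument breaks. Writing $\hat u$ in the form you gave, one finds
\[
\partial_1\hat u(x_1,x_2)=\partial_1 u(x_1,s^-)+\tfrac{x_2-s^-}{s^+-s^-}\bigl(\partial_1 u(x_1,s^+)-\partial_1 u(x_1,s^-)\bigr)+\Sigma'(x_1)\Bigl(\chi_{\{x_2>\bar\gamma\}}-\tfrac{x_2-s^-}{s^+-s^-}\Bigr),
\]
so $\Sigma'$ enters the absolutely continuous gradient of $\hat u$ with a coefficient of order one. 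But for a general $u\in SBV^p_y$ the function $\Sigma$ is merely in $L^1(-r,r)$ — it is the $x_1$-density of $|D^J u|$ over the strip and carries no $x_1$-regularity whatsoever (the jump amplitude $[u](\cdot,c)$ of an $SBV^p$ function is only Borel). Mollifying $u$ in $x_1$ at scale $\eta$ does make $\Sigma_\eta=\Sigma\ast\varphi_\eta$ smooth, but $\|\Sigma_\eta'\|_{L^p}$ is not uniformly bounded as $\eta\to0$; it can blow up like $\eta^{-1}$ or worse. So for each $\eta$ you get a competitor $v_\eta$ whose gradient bound degenerates, and there is no compactness that recovers the claimed inequality $\|\nabla v\|_{L^p}\le c(r/\rho)\|\nabla u\|_{L^p}$ in the limit. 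You flagged this yourself (``absorbed by the $x_1$-mollification'') but it is in fact not absorbed. The structural observation $\mathrm{TV}(\gamma)\le\rho/4$, while correct and a nice use of the orientation constraint, only pins down the geometry of the jump set and says nothing about the jump amplitudes, so it cannot rescue $\Sigma'$.

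The paper's construction is orthogonal precisely to sidestep this. It picks two good vertical slices $x_\pm$ in the outer collars (each with a single jump at $y_\pm:=\gamma(x_\pm)$, nonnegative jump, and $L^p$-controlled derivative), replaces $u$ on $(x_-,x_+)\times(-r,r)$ by the $x_1$-\emph{constant} extension of the slice values with a short linear interpolation on $(-\delta,\delta)$, and glues with a cutoff $\psi(x_2)$. Because the replacement depends on $x_1$ only through the smooth interpolation weight, the jump of the new function at $\{x_2=y_\pm\}$ is a fixed scalar (the slice's jump) times a Lipschitz cutoff — there is never any need to differentiate the jump distribution in $x_1$. The factor $r/\rho$ arises from the choice $\delta:=\rho$ in the interpolation and from the $\rho$-averaging used to select the slices. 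This is the essential idea you would need to import: concentrate the jump by interpolating \emph{in $x_1$ between two vertical slices}, not by interpolating \emph{in $x_2$}, so that only slice data (which lives in $W^{1,p}$ in $x_2$) is ever differentiated.

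Two smaller points: the cutoff should act in $x_2$ (as in the paper) rather than in $x_1$, since the modification equals $u$ exactly on the vertical lines $\{x_1=x_\pm\}$ and one interpolates away from the jump in the $x_2$-direction; and your extra Type~I cleanup step for $J_u\setminus\Gamma$ is unnecessary if one follows the paper's route, since the stray jump outside the two horizontal segments is already confined to $Q_r\setminus Q_{r-\rho}$ with length $O(\rho)$ by the $x_2$-cutoff.
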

\begin{proof}
For $x_+\in (r-\rho,r)$ we consider the slice $u^{x_+}(\cdot):=u(x_+, \cdot)$.
By assumption, the set of $x_+$ such that the jump set of $u^{x_+}$ does not coincide with {$\{\gamma(x_+)\}$} has measure  no larger than $\rho/4$. 
By Fubini, the set of $x_+$ such that $\|(u^{x_+})'\|_{L^p((-r,r))}^p\ge \frac{2}\rho \|\nabla u\|_{L^p(Q_r)}^p$
has measure no larger than $\rho/2$.
Further, for almost all $x_+$, $[u^{x_+}]\ge 0$ on its jump set.
The same holds on the other side. Therefore we can choose
$x_-\in (-r,-r+\rho)$ and $x_+\in (r-\rho,r)$ such that
\begin{equation*}
 J_{u^{x_\pm}}=\{y_\pm\},\hskip3mm [u^{x_\pm}](y_\pm)\ge 0,\hskip3mm \text{ and } \hskip3mm \|(u^{x_\pm})'\|_{L^p((-r,r))}^p\le \frac{2}\rho \|\nabla u\|_{L^p(Q_r)}^p\,,
\end{equation*}
where  $y_\pm:=\gamma(x_\pm)\in (-r/2,r/2)$.

Let $\delta\in (0,r/2)$, chosen below.
We define $w:Q_r\to\R$ as $u$ outside $(x_-,x_+)\times(-r,r)$, as the value at $x_1=x_\pm$ in $((x_-,x_+)\setminus (-\delta,\delta))\times (-r,r)$, and as
the linear interpolation inside. Precisely,
\begin{equation*}
 w(x_1,x_2):=
 \begin{cases}
 u(x_1,x_2) & \text{ if } -r\le x_1<x_-\\
  u(x_-,x_2) & \text{ if } x_-\le x_1\le -\delta\\[1mm]
  \displaystyle
  \frac{x_1+\delta}{2\delta} u(x_+,x_2)+
  \frac{\delta-x_1}{2\delta} u(x_-,x_2) & \text{ if } -\delta< x_1< \delta\\[1mm]
  u(x_+,x_2) & \text{ if } \delta\le x_1\le x_+\\
 u(x_1,x_2) & \text{ if } x_+\le x_1<r\,.
 \end{cases}
\end{equation*}
We observe that $J_w\cap {{(}}(x_-,x_+)\times(-r,r){{)}}$ is the union of two segments of length $x_++\delta$ and $\delta-x_-$,
located at $x_2=y_\pm$. Further,
the jump of $w$ is a convex combination of the jumps of $u^{x_\pm}$, therefore nonnegative,
so that $w\in SBV^p_y(Q_r)$.
We now estimate the derivatives. By convexity, $|\partial_2 w|(x_1,x_2)\le |\partial_2 u|(x_-,x_2)+
|\partial_2 u|(x_+,x_2)$ for all $x_1\in (x_-,x_+)$. The horizontal derivative  vanishes outside $(-\delta,\delta)$, and obeys
\begin{equation*}
|\partial_1w|(x_1,x_2)=\frac{|u(x_+,x_2)-u(x_-,x_2)|}{2\delta}
\le\frac{1}{2\delta} \int_{x_-}^{x_+}|\partial_1 u| (x_1,x_2)\dx
\end{equation*}
inside,
which implies, using first H\"older's inequality and then integrating,
\begin{equation*}
 \|\nabla w\|_{L^p((x_-,x_+)\times(-r,r))}^p \le c\left(\left(\frac{r}{\delta}\right)^p+\frac{r}{\rho}\right)
  \|\nabla u\|_{L^p(Q_r)}^p \,.
\end{equation*}
Further, by Lemma \ref{lemmapoincarehoriz}
\begin{equation*}
 \|u-w\|_{L^p(Q_r)} \le c r \|\nabla u-\nabla w\|_{L^p(Q_r)}\,.
\end{equation*}
Let now $\psi\in C^\infty_c((-r,r))$ with $\psi=1$ on $(-r+\rho,r-\rho)$ and $|\psi'|\le c/\rho$, and define
\begin{equation*}
 v(x):=\psi(x_2) w(x) + (1-\psi(x_2)) u(x)\,. 
\end{equation*}
Obviously $v\in SBV^p_y(Q_r)$, and the jump has the stated properties.
In particular, $\calH^1(J_v\setminus Q_{r-\rho})\le 3\rho $
because $\gamma'=0$ $\calH^1$-almost everywhere on the set $\{x_1: (x_1,\gamma(x_1))\in J_u)\}$, 
and  $\calH^1(J_u \Delta \{(x_1,\gamma(x_1){{)}}: x_1\in (-r,r)\})<\rho/4$.
The energy of the interpolation is controlled by
 \begin{align*}
  \|\nabla v\|_{L^p(Q_r)} \le c\left(\frac{r}{\rho}+\frac{r}{\delta}+\left(\frac{r}{\rho}\right)^{1/p}\right)\|\nabla u\|_{L^p(Q_r)}{{.}}
 \end{align*}
We finally choose $\delta:=\rho$ and conclude the proof.
 \end{proof}

\subsubsection{Covering and global approximation}
We start with a covering Lemma. We need to cover an open set $\Omega$ by a family of squares which have finite overlap, 
such that the half-as-large squares still cover $\Omega$, and such that the ``overlap chains'' are bounded{{.}}
For this purpose we define the set of $k$-neighbouring squares $\calN_k(q)$ and state some of its properties. For a square $q\subset\R^2$ we denote by $\ell_q$ its {half side length,
so that $q=Q_{\ell_q}(x)$ for some $x$}.
\begin{lemma}[Covering]\label{lemmacover}
Let $\Omega\subset\R^2$ open,  $\delta>0$. Then there are
 $N$ families of squares $\calF_1$, \dots, $\calF_N$, all contained in $\Omega$ and with side length no larger than $\delta$, such that,
with $\calF:=\cup_k \calF_k$
and  $\hat q$ denoting the square with the same center and half the side length as $q$,
\begin{enumerate}
  \item $\Omega= \bigcup_{q\in \calF} \hat q$;
   \item if $q\cap q'\ne \emptyset$, then $\frac1c\ell_q\le\ell_{q'}\le c\ell_q$, for all $q,q'\in\calF$;
  \item for each $k$ the squares in $\calF_k$ are disjoint.
 \end{enumerate} 
Further, for $q\in\calF$ let $\calN_1(q):=\{q'\in\calF: q\cap q'\ne\emptyset\}$ be the set of its first neighbours, and $\calN_{k+1}(q):=\cup_{q'\in\calN_k(q)} \calN_1(q')$
be the set of $k$-neighbours.
Then $\# \calN_k(q)\le b^k$ and $\dist(q,q')+\ell_{q'}\le a^k \ell_q$ 
for all $q'\in \calN_k(q)$. 

The constants $N$, $a$, $b$ and $c$ are universal. 
 \end{lemma}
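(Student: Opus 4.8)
The plan is to construct $\calF$ from a Whitney-type decomposition of $\Omega$ into dyadic squares, with the bound $\delta$ on the side length built directly into the selection rule so that it does not destroy the comparability of neighbours. Fix a universal constant $\Lambda$ (say $\Lambda=10$) and let $\mathcal{W}$ be the collection of dyadic squares $Q\subset\R^2$ that are maximal with respect to inclusion among those satisfying $Q\subset\Omega$, $\operatorname{side}(Q)\le\delta/2$, and $\dist(Q,\partial\Omega)\ge\Lambda\operatorname{side}(Q)$. For each $x\in\Omega$ the dyadic squares containing $x$ that satisfy all three conditions form a nonempty chain of bounded side length, hence have a largest element, which lies in $\mathcal{W}$; thus $\mathcal{W}$ covers $\Omega$. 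Since two distinct maximal squares cannot be nested, the squares of $\mathcal{W}$ have pairwise disjoint interiors, so $\mathcal{W}$ tiles $\Omega$. For $Q\in\mathcal{W}$ write $\tilde Q$ for the concentric square with twice the side length; then $\tilde Q\subset\Omega$ (since $\dist(Q,\partial\Omega)\ge\Lambda\operatorname{side}(Q)$) and $\operatorname{side}(\tilde Q)\le\delta$. Setting $\calF:=\{\tilde Q:Q\in\mathcal{W}\}$ we have $\hat{\tilde Q}=Q$, hence $\bigcup_{q\in\calF}\hat q=\bigcup_{Q\in\mathcal{W}}Q=\Omega$, which is (i).

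The heart of the matter is the comparability (ii), and this is the point that requires the most care. Let $q=\tilde Q$ and $q'=\tilde Q'$ intersect, and assume without loss of generality $\operatorname{side}(Q)\ge\operatorname{side}(Q')$; the overlap of the doubled squares then forces $\dist(Q,Q')\le2\operatorname{side}(Q)$. If $\operatorname{side}(Q)\le2\operatorname{side}(Q')$ there is nothing to show, so suppose $\operatorname{side}(Q)\ge4\operatorname{side}(Q')$ and consider the dyadic parent $P'$ of $Q'$, which has $\operatorname{side}(P')=2\operatorname{side}(Q')\le\delta/2$. From $\dist(P',\partial\Omega)\ge\dist(Q,\partial\Omega)-\dist(Q,Q')-\diam P'\ge(\Lambda-2)\operatorname{side}(Q)-3\operatorname{side}(Q')$ and $\operatorname{side}(Q)\ge4\operatorname{side}(Q')$ one checks $\dist(P',\partial\Omega)\ge\Lambda\operatorname{side}(P')$ once $\Lambda$ is large enough, so $P'$ also satisfies the three selection conditions, contradicting the maximality of $Q'$. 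Hence squares of $\calF$ that intersect have side lengths within a factor $2$, which is (ii) with a universal $c$. For (iii): if $x\in q=\tilde Q\in\calF$, all squares of $\calF$ containing $x$ have comparable side lengths and their halves $\hat q=Q$ have pairwise disjoint interiors inside a ball of radius comparable to $\operatorname{side}(Q)$, so a packing argument bounds their number; the same argument bounds by a universal $b$ the number of $q'\in\calF$ meeting a fixed $q$ (in particular $\#\calN_1(q)\le b$, counting $q$ itself). The intersection graph on $\calF$, with an edge between any two intersecting squares, thus has maximal degree at most $b$ and is $(b+1)$-colourable by the greedy algorithm; taking $\calF_1,\dots,\calF_N$ with $N:=b+1$ to be the colour classes yields the required families of pairwise disjoint squares, all contained in $\Omega$ with side length at most $\delta$, and with $\bigcup_{q\in\calF}\hat q=\Omega$.

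It remains to bound the $k$-neighbourhoods. From $\#\calN_1(q)\le b$ and $\calN_{k+1}(q)=\bigcup_{q'\in\calN_k(q)}\calN_1(q')$ we obtain $\#\calN_{k+1}(q)\le b\,\#\calN_k(q)$, hence $\#\calN_k(q)\le b^k$ by induction. For the metric estimate set $a:=c+2\sqrt2$ and argue by induction on $k$. For $k=1$, $q'\in\calN_1(q)$ means $q\cap q'\ne\emptyset$, so $\dist(q,q')=0$ and $\dist(q,q')+\ell_{q'}=\ell_{q'}\le c\,\ell_q\le a\,\ell_q$. Assuming the bound for $k$, let $q'\in\calN_{k+1}(q)$ and pick $q''\in\calN_k(q)$ with $q'\cap q''\ne\emptyset$; then $\dist(q,q')\le\diam q''+\dist(q,q'')$ and $\ell_{q'}\le c\,\ell_{q''}$, and since $\diam q''=2\sqrt2\,\ell_{q''}$ the inductive hypothesis gives $\dist(q,q')+\ell_{q'}\le(2\sqrt2+c)\bigl(\dist(q,q'')+\ell_{q''}\bigr)\le a^{k+1}\ell_q$. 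All constants produced depend only on $\Lambda$ and the dimension, and not on $\Omega$ or $\delta$, by scaling invariance of the dyadic construction; the only genuinely delicate point, as flagged above, is that the cap on the side length must be shown not to spoil (ii), which is exactly the maximality argument for $P'$.
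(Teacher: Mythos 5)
Your proof is correct and follows essentially the same route as the paper, namely a Whitney-type dyadic decomposition of $\Omega$ in which the stopped squares tile $\Omega$ and the concentric doubled squares form $\calF$. The paper cites Stein's Whitney lemma and handles the constraint on the side length by an a posteriori subdivision, whereas you build the cap $\mathrm{side}(Q)\le\delta/2$ directly into the stopping rule and then verify carefully, via the dyadic parent $P'$, that this added cap does not destroy the comparability in (ii) — which is the genuinely delicate point and which you treat more explicitly and self-containedly than the paper does.
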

\begin{proof}
This is a variant of Whitney's covering argument, similar to the one used for example in \cite[Theorem~3.1]{FrieseckeJamesMueller2002}.
For a proof we refer to the variant of Whitney's covering presented in \cite[pp. 167 ff.]{stein:70}. In the definition of $\Omega_k$ set $c=3$, and take the squares obtained there (possibly uniformly 
subdivided into smaller squares to ensure the maximal radius $\delta$) as $\hat{q}$. This easily gives the first and the second assertion.

To estimate the number of neighbours fix a square $q$ and let $\{q_i\}$ be those squares that intersect $q$. Each square $q_i$ is contained in the square with the same center as $q$ and radius 
$2c\ell_{q}$ and so are the disjoint smaller squares $\hat q_i$. Since additionally $\ell_{\hat q_i}{{\geq \ell_{q}/(2c)}}$ we can conclude that their number is uniformly bounded. Property (iii) follows 
immediately, the bound on the distance of neighbours follows from (ii).
\end{proof}

\def\calGII{\calB}
\begin{theorem}[Jump set made of segments]\label{theodensityfull}
 Let $u\in SBV_y^p(\Omega)$, where $\Omega\subset\R^2$ is a Lipschitz bounded set. 
 Then there is a
 sequence $v_j\in SBV_y^p(\Omega)$ with $v_j\to u$ in $L^1$, $u=v_j$ on $\partial\Omega$, $\limsup_j E(v_j,\Omega)\le E(u,\Omega)$, 
 such that $J_{v_j}$ is locally a finite union of segments.\\
 Moreover: If $U \subset \Omega$ open is such that $B_R(U)\cap J_u{{\cap\Omega}}=\emptyset$ for some $R>0$, then the sequence $v_j$ can be constructed in a way that $v_j=u$ {{in}} $U$.
\end{theorem}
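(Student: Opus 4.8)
The plan is to cover $\Omega$ by a Whitney-type family of squares via Lemma~\ref{lemmacover}, to sort these squares into three classes according to how $J_u$ sits inside them, and then to replace $u$ square by square using Propositions~\ref{proptyp1}, \ref{proptyp2} and~\ref{proptyp3}; since each of these constructions leaves the function unchanged near the boundary of the square it acts on, the local modifications patch together into a global competitor. First I would reduce to $E(u,\Omega)<\infty$. Since $\nu_u=e_2$ $\calH^1$-almost everywhere on $J_u$ and $J_u$ is rectifiable, $J_u$ is, up to an $\calH^1$-null set, contained in a countable union of graphs $\{x_2=\gamma_i(x_1)\}$ of $C^1$ functions; in particular $\calH^1$-almost every $x_0\in J_u$ has a \emph{horizontal} approximate tangent (so the relevant $\gamma_i'$ vanishes there), is a Lebesgue point of $|\partial_1u|^p+|\partial_2u|^p$ and of the density of $J_u$ along its curve, and sees the other graphs and the null set with vanishing density. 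Fix $\eps>0$. Using $|J_u|=0$ and $\dist(J_u\cap\Omega,U)\ge R>0$, choose an open set $W$ with $J_u\cap\Omega\subset W$, $\overline W\cap U=\emptyset$, $\dist(\cdot,J_u\cap\Omega)<R/2$ on $W$, and $\int_W(|\partial_1u|^p+|\partial_2u|^p)\,\dxy<\eps$; all nontrivial modifications below will live inside $W$, so automatically $v=u$ on $\Omega\setminus W\supset U$.

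Next I would apply Lemma~\ref{lemmacover} to $\Omega$ with a small parameter $\delta$, obtaining colour classes $\calF_1,\dots,\calF_N$ with $\calF=\bigcup_k\calF_k$ and the half-squares $\hat q$ covering $\Omega$. For $\delta$ small every $q\in\calF$ that meets $J_u$ lies in $W$. I classify such squares: $q$ is of \emph{Type III} (``jump square'') if $J_u\cap q$ agrees, up to an error that is $o(\ell_q)$ as $\delta\to0$, with the graph of a $C^1$ function across $q$ (by the Lebesgue-point properties any sufficiently small Whitney square whose half-square contains a good point of $J_u$ is of this kind, and then one may take $\rho=\rho(q)=o(\ell_q)$ in Proposition~\ref{proptyp3}); $q$ is of \emph{Type I} (``bulk square'') if $\calH^1(J_u\cap q)\le\eta\ell_q$ for a fixed small $\eta$; and $q$ is of \emph{Type II} (``bad square'') otherwise. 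The key observation is that Type~II squares are few: by rectifiability together with a Vitali/Egorov argument at the good points, all but an $\calH^1$-mass $O(\eps)$ of $J_u$ is captured by Type~III half-squares once $\delta$ is small, and the remaining Type~II squares satisfy $\sum_{q\ \mathrm{Type\ II}}\calH^1(J_u\cap q)\le C\eps$ by bounded overlap.

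I would then build the competitor $v=v_{\delta,\eta}$ by running through the colour classes $\calF_1,\dots,\calF_N$ in turn, treating within each class all its pairwise disjoint squares at once: on Type~I squares apply Proposition~\ref{proptyp1} with $\rho=\sqrt\eta\,\ell_q$ (elastic factor $1+O(\eta^{1/(2p')})$), on Type~III squares apply Proposition~\ref{proptyp3}, on Type~II squares apply Proposition~\ref{proptyp2}, doing nothing on squares disjoint from $J_u$. Because each step changes the function only inside a square, preserves $[\cdot]\ge0$, and turns the jump set inside the corresponding half-square into finitely many segments, the resulting $v$ lies in $SBV^p_y(\Omega)$, equals $u$ on $\partial\Omega$ and on $U$, and has a jump set which in every compact subset of $\Omega$ (which meets only finitely many Whitney squares) is a finite union of segments. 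The quantitative neighbour bounds $\#\calN_k(q)\le b^k$ and $\dist(q,q')+\ell_{q'}\le a^k\ell_q$ of Lemma~\ref{lemmacover} are used to check that, when a square is processed, the \emph{current} iterate still satisfies the hypotheses of the relevant Proposition (its jump length and local geometry inside $q$ have been perturbed only by the boundedly many already-treated neighbours, by amounts vanishing with $\delta$), and that the elastic energy accumulated over the $N$ rounds does not inflate beyond the Type~I factor outside $W$. Since all modifications live in $W$, this yields $\int_\Omega(|\partial_1v|^p+|\partial_2v|^p)\,\dxy\le\int_\Omega(|\partial_1u|^p+|\partial_2u|^p)\,\dxy+C\eps$; on the Type~III part the ``overwriting'' (each later square replaces the jump set in most of itself) means that along a good arc the final jump set has multiplicity one outside a horizontal set of measure $\le\sum_{\mathrm{III}}c\rho(q)\to0$, so $\calH^1(J_v)\le\calH^1(J_u)+o(1)+C\eps$; and $\|v-u\|_{L^1(\Omega)}\le|W|^{1/2}\|v-u\|_{L^2(\Omega)}\to0$ as $|W|\to0$, using that $v$ stays bounded in $BV(\Omega)\hookrightarrow L^2(\Omega)$. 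Letting $\delta\to0$, then $\eta\to0$, then $\eps\to0$ and extracting a diagonal sequence gives $v_j$ with all required properties, the last assertion being built in because $v_j=u$ on $U$ throughout.

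The main obstacle is exactly this covering/patching bookkeeping: one must fix an order on the squares and verify that each of Propositions~\ref{proptyp1}--\ref{proptyp3} still applies to the running iterate rather than to $u$ itself, keep the elastic energy from inflating by more than $1+o(1)$ off the small set $W$, and --- most delicately --- ensure that the jump segments created on the \emph{overlapping} Type~III squares do not accumulate, so that $\calH^1(J_{v_j})$ converges to $\calH^1(J_u)$ rather than merely staying bounded by a fixed multiple of it. This is precisely what the neighbour estimates of Lemma~\ref{lemmacover}, the ``unchanged near $\partial q$'' property of the three constructions, and the freedom to take $\rho(q)=o(\ell_q)$ on Type~III squares are there to deliver.
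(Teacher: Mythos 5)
Your high-level plan is recognisable, but there is a genuine structural gap at its centre: you want to run Proposition~\ref{proptyp3} (Type III) inside the Whitney iteration of Lemma~\ref{lemmacover}, on squares that \emph{overlap} across colour classes. This is precisely what the paper's proof avoids, and for good reasons. The paper first selects a \emph{finite disjoint} Vitali family $\calG$ of Type III squares centred at good points of $J_u$ (so each satisfies $\calH^1(J_u\Delta\gamma_q^*)\le\eps\ell_q/4$ and, crucially, $\|\nabla u\|_{L^p(q)}^p\le\eps^{p+1}\ell_q$), applies Proposition~\ref{proptyp3} once on each to produce $w$, and only afterwards brings in the Whitney cover on the complement $\Omega\setminus\omega$, where the remaining jump mass is already $O(\eps)$ and only Types~I and~II are needed. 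Your variant instead iterates Type III over overlapping squares, and two things then break down that you do not address.

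First, when Proposition~\ref{proptyp3} is to be applied to a square $q_2$ that overlaps an already-treated Type III square $q_1$, the current iterate inside $q_2$ is no longer $u$: its jump set is a mixture of the untouched part of $J_u$ and the horizontal segments created in $q_1$, which in general sit at a different height than $\gamma_{q_2}$. There is no reason the hypothesis $\calH^1(J\Delta\gamma^*)<\rho/4$ of Proposition~\ref{proptyp3} persists for the iterate, nor do you argue that it does; and the final surface energy estimate $\calH^1(J_{v_j})\le\calH^1(J_u)+o(1)$ hinges on exactly this. Your remark that ``along a good arc the final jump set has multiplicity one outside a horizontal set of measure $\le\sum c\rho(q)$'' is the claim that needs proof, not a consequence. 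Using disjoint Type III squares removes this problem entirely. Second, Proposition~\ref{proptyp3} inflates the gradient by a factor $c\,r/\rho$, which blows up as $\rho/\ell_q\to0$; you propose to take $\rho(q)=o(\ell_q)$ ``freely'', and to compensate only by making $\int_W|\nabla u|^p$ small. That compensation is not enough by itself: you would need $(\ell_q/\rho(q))^p\,\|\nabla u\|_{L^p(q)}^p$ summable, which is exactly the pointwise Lebesgue-density condition $\|\nabla u\|_{L^p(q)}^p\le\eps^{p+1}\ell_q$ of the paper, not a global bound on the integral over $W$ — and even that condition is about $u$, not about the already-modified iterate on an overlapping square. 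Moreover, carrying the $c\,r/\rho$ factor through the $N$-fold neighbour recursion of Lemma~\ref{lemmacover} (the estimate $\|\nabla z_k\|_{L^p(q)}\le c^k\sum_{q'\in\calN_k(q)}\|\nabla w\|_{L^p(q')}$) only works because in the paper that recursion is run with the bounded constants of Propositions~\ref{proptyp1} and~\ref{proptyp2}; inserting an unbounded Type~III constant into the recursion destroys the estimate. In short, your plan omits the single most important organisational decision in the proof — treat Type III once, on a disjoint family, \emph{before} the Whitney iteration — and the two difficulties you flag at the end as ``what the lemmas are there to deliver'' are exactly the ones that this reordering is designed to remove, not ones that the lemmas themselves resolve.
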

We say that  $J_v$ is {{locally}} a finite union of segements, if for any $\omega\subset\subset \Omega$ there is a finite union of segments $S\subset\R^2$
such that ${{J_{v}\cap\omega}}$ coincides, up to $\calH^1$-null sets, with $S\cap \omega$.
\begin{proof}
{\em Step 1: Treatment of the main part of the jump set.}\\
Since $J_u$ is rectifiable there are countably many curves $\gamma_j\in C^1(\R)$ such that
$J_u\subset \cup_i \{(x_1,\gamma_i(x_1): x_1\in\R\}\cup N$, with $\calH^1(N)=0$. All curves 
can be taken as graphs with respect to $x_1$, since for {{almost every}} $x\in J_u$ the normal is $e_2$.

Fix $\eps\in(0,1/4)$.
For $\calH^1$-almost every $x\in J_u$ there is a curve $\gamma_x\in C^1(\R)$ with $\gamma_x'(x_1)=0$ such that
\begin{equation*}
 \lim_{r\to0} \frac{1}{2r} \calH^1((J_u\Delta \gamma_x^*) \cap Q_r(x))=0\,,
\end{equation*}
and 
\begin{equation*}
 \lim_{r\to0} \frac{1}{2r} \|\nabla u\|_{L^p(Q_r(x))}^p =0\,.
\end{equation*}
Here we write $\gamma_x^*$ for the graph $\{(s,\gamma_x(s)): s\in\R\}$ of the curve $\gamma_x$.
Since for almost every $r>0$ one has $\calH^1(J_u\cap \partial Q_r(x))=0$, there is a fine cover of $\calH^1$-almost all of 
$J_u$ with squares $q$ such that
\begin{equation}\label{propgoodsquaretyp3f}
 \calH^1((J_u\Delta\gamma_q^*) \cap q) \le \frac{\eps \ell_{q}}4\,,
 \calH^1(J_u\cap\partial q) =0\,,
\gamma_q^*\cap q\subset R_q^\eps\,,
 \text { and }
 \|\nabla u\|_{L^p(q)}^p\le \eps^{p+1} \ell_{q}.
\end{equation}
Here we denote by $\gamma_q$ the curve pertaining to the midpoint of $q$, and by $R_q^\eps:=x+(-\ell_q,\ell_q)\times(-\eps \ell_q, \eps \ell_q)$ the central stripe of $q$.
The first condition implies $\calH^1(J_u\cap q)\ge (1-\eps/4) \ell_q$.
We can extract a countable set of disjoint squares which cover 
$\calH^1$-almost all of $J_u$, and in particular a finite set $\calG:=\{q_1,\dots, q_M\}$ of disjoint squares
with the property (\ref{propgoodsquaretyp3f}) which cover $J_u$ up to a set of measure $\eps$.

We apply Proposition \ref{proptyp3} to each square $q\in \calG$,  with $\rho:=\eps \ell_q$,
and define $w$ as the result in each $q$, and as $u$ outside.
We obtain $w\in SBV^p_y(\Omega)$ which coincides with $u$ around $\partial\Omega$, and such that
for any $q\in \calG$ the set
$J_w\cap \hat q^{(1-\eps)}$ is the union of two segments.
Here $\hat q^{(1-\eps)}$ is the square with the same center as $q$ and side length $1-\eps$ times smaller.
The jump set is estimated by 
\begin{align*}
\calH^1(J_w\setminus \cup_{q\in\calG} \hat q^{(1-\eps)})\le& \calH^1(J_w\setminus \cup_{q\in\calG} q) + \sum_{q\in\calG} \calH^1(J_w\cap q\setminus \hat q^{(1-\eps)})
\le \eps + \sum_{q\in\calG} c \eps \ell_{q} \\
\le &
\eps + c \eps \calH^1(J_u) 
\end{align*}
 and, since by  Proposition \ref{proptyp3} the total length of $J_w\cap \hat q^{(1-\eps)}$ is bounded by $2(1+\eps)\ell_q$
 and by (\ref{propgoodsquaretyp3f}) $\calH^1(J_u\cap \hat q)\ge 2(1-\eps)\ell_q$, 
\begin{align*}
\calH^1(J_w\cap \cup_{q\in\calG} \hat q^{(1-\eps)})\le& \sum_{q\in\calG} 2(1+\eps) \ell_{q} \le
(1+3\eps) \calH^1(J_u\cap \cup_{q\in\calG}  q)\,.
\end{align*}
For the gradient term we obtain, again using  Proposition \ref{proptyp3} and (\ref{propgoodsquaretyp3f}),
 \begin{align*}
\|\nabla w\|_{L^p(\Omega)}^p\le& \|\nabla u\|_{L^p(\Omega)}^p+ \sum_{q\in\calG}\frac{c}{\eps^p} \|\nabla u\|_{L^p( q)}^p\\
\le
& \|\nabla u\|_{L^p(\Omega)}^p+ \frac{c}{\eps^p} \sum_{q\in\calG} \eps^{p+1} \ell_{q} 
\le
 \|\nabla u\|_{L^p(\Omega)}^p+ c\eps \calH^1(J_u) \,. 
\end{align*}
We also estimate, using the same bound and Lemma \ref{lemmapoincarehoriz} in each square,
\begin{equation*}
 \|w-u\|_{L^p(\Omega)}^p\le \sum_{q\in\calG} c \ell_q^p\|\nabla u-\nabla w\|_{L^p(q)}^p
 \le   \sum_{q\in\calG} c\eps^{p+1} \ell_q^{p+1}\le c \eps^{p+1}|\Omega| (\diam\Omega)^{p-1}\,. 
\end{equation*}
This concludes the treatment of the main part of the jump set.

We summarize what we have obtained so far. Given $\eps>0$, we obtained a finite set of squares $\calG$ 
and $w\in SBV^p_y(\Omega)$ such that $J_w\cap \hat q^{(1-\eps)}$ consists of two segments for any $q\in\calG$,
$\|\nabla w\|_{L^p(\Omega)}\le \|\nabla u\|_{L^p(\Omega)}+ M_u\eps$, 
$\calH^1(J_w\cap \Omega)\le \calH^1(J_u\cap\Omega)+M_u\eps$,  
 $u=w$ around $\partial\Omega$,
 $\|u-w\|_{L^1(\Omega)}\le M_u\eps$, and $\calH^1(J_u\setminus\omega)\le M_u \eps$. Here $M_u$ is a constant that 
 may depend on $u$ and $\Omega$ but not on $\eps$ and we let  $\omega:=\cup_{q\in\calG} \hat q^{(1-\eps)}$ be the union of the smaller squares.

{\em Step 2: Treatment of the small part of the jump set.}\\
 We intend to cover $\Omega\setminus\omega$ with squares
much smaller than those composing $\omega$.
We fix $\delta>0$, chosen below.
We choose $N$ families of squares $\calF_1'$, \dots, $\calF_N'$ which cover $\Omega$
as in Lemma \ref{lemmacover}.
Inside $\omega$ we do not need to modify the function $w$ any more, hence the squares in $\calG':=\{q\in\calF': q\subset\omega\}$ need not be touched.
We set $\calF_k:=\calF_k'\setminus\calG'$, and correspondingly 
$\calF:=\calF'\setminus\calG'=\cup_k\calF_k$. Property (i) now reads $\Omega=\omega\cup \bigcup_{q\in\calF} q$, (ii) and (iii) still hold, the estimates on $\calN_k$ are also still valid.
Since $J_w$ is union of two segments inside each of the squares in $\calG$, we have
\begin{equation*}
 \calH^1(J_w\cap \bigcup_{q\in\calF} q) \le \calH^1(J_u\cap \Omega\setminus\omega) + 2\delta \#\calG 
 \le 2M_u\eps
\end{equation*}
provided $\delta$ is chosen sufficiently small (on a scale set by $\eps$ and $\calG$, which in turn depends on $\eps$ {and $u$}).

Fix $\eta\in(0,{{10^{-2}}})$. Let $\calGII:=\{q\in \calF: \calH^1(J_w\cap q) \ge \eta \ell_q\}$ be the set of ``bad'' squares, 
on which we cannot use Proposition \ref{proptyp1}.

We intend to iteratively apply the constructions in the individual squares of each family.
We set $z_0:=w$. 

We first explain how to construct $z_k$ from $z_{k-1}$, working on the (disjoint) squares of the subfamily $\calF_k$.
We describe the procedure at step $k$, dropping the index from the notation for simplicity. 
We set $y_0:=z_{k-1}$, let $(q_m)_{m\in\N}$ be an enumeration of $\calF_k$, and define for $m\in\N$ the function
$y_m$ by $y_m=y_{m-1}$ on $\Omega\setminus q_m$, and inside $q_m$ as the result of Proposition \ref{proptyp1}  with $\rho:=\eta^{1/2}\ell_{q_m}$ if $q_m\not\in \calGII$, and the result
of  Proposition \ref{proptyp2} if $q_m\in \calGII$. 
We obtain
\begin{equation*}
 \calH^1(J_{y_m})\le \calH^1(J_{y_{m-1}}) + c \calH^1(J_{y_{m-1}}\cap q_m).
\end{equation*}
Since $y_{m-1}=y_0$ in $q_m$ and the squares are disjoint, this gives $ \calH^1(J_{y_m}) \le (1+c) \calH^1(J_{y_0})$ independently of $m$. Further, 
the constructions give
\begin{equation*}
 |Dy_m-Dy_{m-1}|(\Omega)= |Dy_m-Dy_{m-1}|(q_m)\le c |Dy_{m-1}|(q_m)=c |Dy_0|(q_m)\,,
\end{equation*}
where as above we used that $y_{m-1}=y_0$ on $q_m$. Since the $q_m$ are disjoint, this shows that $\sum_m |Dy_m-Dy_{m-1}|(\Omega)\le
c|Dy_0|(\Omega)<\infty$, hence
(recalling $y_m=y_0$ on $\partial\Omega$)
the sequence $y_m$ is a Cauchy sequence in $BV$. Let $y_\infty$ be the limit. Since all $y_m$ have the same traces on $\partial\Omega$
as $y_0$, so does $y_\infty$. 
At the same time,  since 
$\eta\le \eta^{1/p'}$, 
\begin{equation}\label{eqnablawmqmf}
 \|\nabla y_m\|_{L^p(q_m)}^p\le \|\nabla y_{m-1}\|_{L^p(q_m)}^p+c\eta^{p/(2p')} \|\nabla y_{m-1}\|_{L^p(q_m)}^p + c  \|\nabla y_{m-1}\|_{L^p(q_m)}^p \chi_{q_m\in\calGII}
\end{equation}
where the last term only appears if $q_m$ is in $\calGII$.
Again, since $y_{m-1}=y_0$ on $q_m$ and the squares are disjoint we obtain  a uniform bound on $ \|\nabla y_m\|_{L^p(\Omega)}$,
\begin{equation}\label{eqestnablaymp}
 \|\nabla y_m\|_{L^p(\Omega)}^p\le (1+c\eta^{p/(2p')})\|\nabla y_0\|_{L^p(\Omega)}^p+ c \sum_{q\in\calF_k\cap \calGII}  \|\nabla y_{0}\|_{L^p(q)}^p \,.
\end{equation}
By the $SBV^p$ compactness theorem, the limit $y_\infty$ belongs to $SBV^p(\Omega)$. 
 We define $z_k$ as $y_\infty$. It is clear that $z_k\in SBV^p_y(\Omega)$ with $z_k=u$ on $\partial\Omega$. Further, using Lemma \ref{lemmapoincarehoriz}
 \begin{equation}\label{eqzkzk1lp}
  \|z_k-z_{k-1}\|_{L^p(\Omega)}^p \le \sum_m c \ell_{q_m}^p \|\nabla y_m\|^p_{L^p(q_m)} 
   {{\le  c \delta^p \|\nabla z_{k-1}\|^p_{L^p(q_m)} .}}
 \end{equation}
 Iterating this procedure over the $N$ families we obtain the function $z_N$. 
 By construction, $z_N\in SBV^p_y(\Omega)$ and $z_N=u$ on $\partial\Omega$. Further, any open set $\hat\Omega\subset\subset\Omega$ intersects only finitely many squares,
 therefore $J_{z_N}\cap \hat\Omega$ is a finite union of segments. 
 It remains to estimate the norms. We first observe that, with $\Omega':=\cup_{q\in\calF} q$, 
\begin{equation*}
 \calH^1(J_{z_k}\cap\Omega')\le c \calH^1(J_{z_{k-1}}\cap\Omega') 
\end{equation*}
which immediately gives $\calH^1(J_{z_N}\cap\Omega')\le c^N \calH^1(J_{w}\cap\Omega')\le 2c^NM_u\eps$. Further, by (\ref{eqestnablaymp})
\begin{equation}\label{eq:B}
\|\nabla z_k\|_{L^p(\Omega)}^p \le
(1+c\eta^{p/(2p')}) \|\nabla z_{k-1}\|_{L^p(\Omega)  }^p
+c \sum_{q\in \calF_k\cap \calGII}\|\nabla z_{k-1}\|_{L^p(q)  }^p\,.
\end{equation}
In order to estimate the last term, we use the second part of Lemma \ref{lemmacover}.
The key property of the construction we use here is the fact that at each step $k$ the function is only modified in the squares of $\calF_k$, which are disjoint. 
Let now $q\in\calF$ be a generic square. Since only the changes in squares $q'\in\calF_k$ which 
intersect $q$ modify the function inside $q$, we obtain from (\ref{eqnablawmqmf})
\begin{equation*}
 \|\nabla z_k\|_{L^p(q)} \le c\sum_{ q'\in \calF_k\cap \calN_1(q)} \|\nabla z_{k-1}\|_{L^p(q')}\,.
\end{equation*}
Iterating this condition, and recalling the properties in  Lemma \ref{lemmacover}, {{we have}}
\begin{equation*}
 \|\nabla z_k\|_{L^p(q)} \le c^k \sum_{ q'\in \calN_k(q)} \|\nabla w\|_{L^p(q')}.
\end{equation*}
Since any square $q'$ can be reached only starting from its $N$-neighbours, the combinatorial coefficient is uniformly bounded.
We conclude that
\begin{equation*}
 \sum_k \sum_{q\in \calF_k\cap \calGII}\|\nabla z_{k-1}\|_{L^p(q)  }
 \le c \|\nabla w\|_{L^p(\hat\omega)}
\end{equation*}
where 
\begin{equation*}
 \hat\omega:=  \bigcup_{q\in \calF\cap \calGII} \bigcup_{q'\in \calN_N(q)} q'\,.
\end{equation*}
It remains to estimate the size of the set $\hat\omega$. Since $\ell_{q'}\le a^N \ell_q$, we have
\begin{align*}
| \hat\omega|&\le   4 \sum_{q\in \calF\cap \calGII} (a^{{2N}} \ell_q^2) \# \calN_N(q)
\le 4{{a^{2N}b^N}}  \sum_{q\in \calF\cap \calGII} \ell_q^2\\
& \le 4{{\frac{a^{2N}b^N}{\eta}}}  \sum_{q\in \calF\cap \calGII} \delta\,\calH^1({{J_w}}\cap q)\le 4{{\frac{a^{2N}b^N \delta}{\eta}}}  \calH^1({{J_w}}\cap\Omega') 
\le {{\frac{8M_ua^{2N}b^N\eps\delta}{\eta}}}.
\end{align*}
At this point we choose $\eta:=\eps$.  In the limit $\delta\to0$ we have $|\hat\omega|\to0$, therefore if $\delta$ is sufficiently small 
we have $\|\nabla w\|_{L^p(\hat\omega)}\le\eps${{, and so \eqref{eq:B} yields $\|\nabla z_N\|_{L^p(\Omega)}^p\leq (1+C\eps^{p/(2p')})\|\nabla w\|_{L^p(\Omega)}^p+\eps$}}. Further, for $\delta$ sufficiently small iterating
(\ref{eqzkzk1lp}) for $k=1,\dots, N$ yields $\|z_N-w\|_{L^p}\le\eps$.

Finally, we define $v_j$ as the function $z_N$ obtained with $\eps:=1/j$. 

{\em Step 3: Inclusion of the condition on $U$.}\\
If there are $U\subset \Omega $ open and $R>0$ such that $J_u\cap B_R(U)= \emptyset$ we modify the construction slightly. Let first the maximal diameter of all squares (in both iterations) be smaller 
then $R/2$ (this means $2\sqrt2 \ell_q< R/2$ for all squares $q$). Further, at the beginning of Step 2 we also exclude 
all squares which contain no jump set (and in which, therefore, no action is needed). Precisely,
we replace $\calG'$ by
 $\calG'':=\{q\in\calF': q\subset\omega \text{ or } \calH^1(q\cap J_w)=0\}$.
Then none of the ``surviving'' squares touches $U$, hence $u$ is not modified in $U$. All other properties still hold.
\end{proof}

\section*{Acknowledgments} 
We thank Hans Kn\"upfer for helpful discussions.  
This work was partially supported 
by the Deutsche Forschungsgemeinschaft through the Sonderforschungsbereich 1060 
{\sl ``The mathematics of emergent effects''}, project A6. 
\appendix
\section{Scaling Law}
In this appendix we prove Theorem \ref{th:scaling}. 
We follow the paths of the proof of the scaling law for the special case $p=2$ (see \cite{kohn-mueller:92,kohn-mueller:94,conti:06,diermeier:10,zwicknagl:14}).
\begin{proof}[Proof of  Theorem \ref{th:scaling}]
{\em Step 1: Upper bound}\\
Note that the constant function $u_c:=0$ yields $I_{\theta,\eps}^p(u_c)=\theta^p$. 
If $\eps\le \theta^p$ we  construct a test function $u_b$ with $I_{\theta,\eps}^p(u_b)\leq c\theta^p(\eps/\theta^p)^{p/(p+1)}$, using the 
variant of the branching construction from \cite{kohn-mueller:94} given in \cite{zwicknagl:14} in the formulation of \cite{conti-zwicknagl:15}. The construction given in \cite[Lemma 
5.2]{conti-zwicknagl:15} shows that for arbitrary $\ell>0$ and $h>0$ there exists a function $b=b^{(\ell,h)}:(0,\ell)\times\R\to\R$ with the following properties: 
\begin{itemize}
\item[(i)] $b(x_1,0)=0$, $b(x_1,\cdot)$ is $h$-periodic,
\[b(\ell,x_2)=\begin{cases}
-\theta x_2&\text{\quad if\ }0\leq x_2\leq h(1-\theta)/2,\\
(1-\theta)(x_2-h/2)&\text{\quad if\ } h(1-\theta))/2\leq x_2\leq h(1+\theta)/2,\\
-\theta x_2+\theta h&\text{\quad if\ } h(1+\theta)/2\leq x_2\leq h,\\
\end{cases} \]
\item[(ii)] $b(0,x_2)=\frac{1}{2}b(\ell,2x_2)$,
\item[(iii)] $\|\partial_1b\|_{L^p((0,\ell)\times(0,h))}\leq c\frac{\theta^ph^{p+1}}{\ell^{p-1}}$,
\item[(iv)] $\int_{(0,\ell)\times(0,h)}|D^2b|\leq C(\ell+{{\theta}}h)$,
\item[(v)] $\partial_2b\in\{-\theta,1-\theta\}$ almost everywhere.
\end{itemize} 
We now proceed as in \cite[Lemma 1]{kohn-mueller:94}. We choose a refining parameter $\alpha\in(2^{-p/(p-1)},2^{-1})$, and choose $N\in\N$ such that $N\sim (\theta^p/\eps)^{1/(p+1)}$.  We  decompose 
$(0,1)\times(0,1)$ into rectangles 
\[R_{ij}:=(\alpha^{i+1},\alpha^i)\times (\frac{j}{2^iN},\frac{{{j+1}}}{2^iN}),\qquad i=0,1,\dots\text{\ and \ }j=0,\dots,{{2^iN}}-1.\]
For $i\leq I$ with $(2\alpha)^I\sim \theta/N$, we set $u_b(x_1,x_2)=b^{(\ell_i,h_i)}(x_1-\alpha^{i+1},x_2)$ on $R_{i0}$, where $\ell_i:=(1-\alpha)\alpha^{ i}$ and $h_i:=1/(2^iN)$. The function $u_b$ is then 
extended $1/(2^iN)$-periodically in $x_2$-direction to the remaining $R_{ij}$. Finally, we use linear interpolation in $x_1$ on $(0,\alpha^{I+1})\times(0,1)$. The total energy in 
$(\alpha^{I+1},1)\times(0,1)$ is then estimated by
\[C\sum_{i=1}^I \left(\frac{\theta^p}{(2^iN)^{p+1}\alpha^{i(p-1)}}2^iN+\eps\alpha^{i}2^iN\right)\leq C(\eps\theta)^{p/(p+1)}.\]
Since $\alpha <(2\alpha)^p$, 
$\alpha^I\le (2\alpha)^{pI}\sim (\theta/N)^p \sim (\eps\theta)^{p/(p+1)}$ and 
the transition layer obeys the analogue upper bound. 

{\em Step 2: Lower bound}\\
To derive the lower bound, we follow closely the lines of \cite[Proof of Theorem 1]{zwicknagl:14}, which in turn is based on \cite{conti:06}. Let $\theta$, $\eps$, $p$ be given as in Theorem \ref{th:scaling}, and fix $u\in\mathcal{B}$ arbitrary. 
Set $t:=\min\{1,(\eps/\theta^p)^{1/(p+1)}\}$. Choose $J:=[y,y+t]\subset(0,1)$ such that
\[I_J(u):= \int_{(0,1)\times J}|\partial_1u|^p+\min\left\{|\partial_2u+\theta|^p,\ |\partial_2 u-(1-\theta)|^p\right\}\dxy+\eps|D^2u|((0,1)\times J)\leq 2tI_{\theta,\eps}^p(u).\]
By Fubini, there exists $M\subset (0,1)$ with $\calL^1(M)>0$ such that for all $x_1\in M$
\begin{eqnarray*}
&&\int_{\{x_1\}\times J}|\partial_1u|^p+\min\left\{|\partial_2u+\theta|^p,\ |\partial_2 u-(1-\theta)|^p\right\}\dy\\
&&\leq3\int_{(0,1)\times J}|\partial_1u|^p+\min\left\{|\partial_2u+\theta|^p,\ |\partial_2 u-(1-\theta)|^p\right\}\dxy 
\end{eqnarray*}
and 
\[|\partial_2\partial_2 u|(\{x_1\}\times J)\leq 3|D^2v|((0,1)\times J). \]
We decompose $M:=M_1\cup M_2\cup M_3$, where 
\begin{eqnarray*}
M_1&:=&\{x_1\in M: |\partial_2u+\theta|\leq |\partial_2 u-(1-\theta)|\text{ a.e. }x_2\in J\},\\
M_2&:=&\{x_1\in M: |\partial_2u+\theta|\geq |\partial_2 u-(1-\theta)|\text{ a.e. }x_2\in J\},\\
M_3&:=&M\setminus (M_1\cup M_2).
\end{eqnarray*}
One of the three sets has positive measure. 
If $M_1$ has positive measure, then fix $x_1\in M_1$. We use the following variant of an estimate from 
\cite[Lemma 1]{conti:06} (see \cite{zwicknagl:14})
\[t^2\theta\lesssim\min_{c\in\R}\|u(x_1,x_2)+\theta x_2-c\|_{L^1(J)} +\|u(x_1,\cdot)\|_{L^1(J)}.\]
By Poincar\'{e}'s inequality and definition of $M_1$ (recall that $p':=p/(p-1)$),
\begin{eqnarray*}
\min_{c\in\R}\|u(x_1,x_2)+\theta x_2-c\|_{L^1(J)} \le t\|\partial_2 u+\theta\|_{L^1(J)}\le t^{1+1/p'}(I_J(u))^{1/p},
\end{eqnarray*}
and by the boundary conditions and the fundamental theorem of calculus,
\begin{eqnarray*}
\|u(x_1,\cdot)\|_{L^1(J)}\leq\|\partial_1u\|_{L^1((0,1)\times J)}\leq t^{1/p'}(I_J(u))^{1/p}.
\end{eqnarray*}
Therefore, in this case, 
\begin{eqnarray*}
I_{\theta,\eps}^p(u)\gtrsim \theta^p\min\{t^p,1\} \geq \theta^p\min\{1,(\eps/\theta^p)^{p/(p+1)}\}.
\end{eqnarray*}
Similarly, if $M_2$ has positive measure, we obtain the larger lower bound $I(u)\gtrsim \min\{t^p,1\}$. Finally, if $M_3$ has positive measure, fix $x\in M_3$. There are two possibilities: Either 
\[\min\{|\partial_2u+\theta|,|\partial_2u-(1-\theta)|\}\geq 1/4\text{\qquad for a.e.\ }x_2\in J, \]
which implies
\[\frac{t}{4^p}\leq\int_{\{x_1\}\times J} \min\left\{|\partial_2v+\theta|^p,\ |\partial_2 v-(1-\theta)|^p\right\}\dy\leq I_J(u),\]
or $|\partial_2\partial_2u|(\{x_1\}\times J)\geq 1/4$. Hence, 
\[I_{\theta,\eps}^p(u)\gtrsim\min\{\eps/t,\,1\}\gtrsim\theta^p\min\{(\eps/\theta^p)^{p/(p+1)},\,1\}. \]
 \end{proof}

\bibliographystyle{siam}
\bibliography{biblio}

\end{document}